\numberwithin{equation}{subsection}
\renewcommand{\H}{\operatorname{H}}
\newcommand{\tor}{\operatorname{Tor}}
\newcommand{\ext}{\operatorname{Ext}}
\renewcommand{\hom}{\operatorname{Hom}}
\newcommand{\length}{\operatorname{length}}
\newcommand{\spec}{\operatorname{Spec}}
\newcommand{\supp}{\operatorname{Supp}}
\newcommand{\var}{V}
\newcommand{\im}{\operatorname{Im}}
\renewcommand{\ker}{\operatorname{Ker}}
\newcommand{\codim}{\operatorname{codim}}
\newcommand{\vandim}{\operatorname{vdim}}
\newcommand{\f}[1]{{^{\mathit{f}^{#1}}}\!}
\newcommand{\frob}{F}
\newcommand{\herzog}{G}
\newcommand{\grot}{\mathbb{G}}
\newcommand{\proj}{\mathsf{P^\mathrm{f}}}
\newcommand{\inj}{\mathsf{I^\mathrm{f}}}
\newcommand{\derg}{\mathsf{D}}
\newcommand{\projg}{\mathsf{P}}
\newcommand{\fla}{\mathsf{F^\mathrm{f}}}
\newcommand{\injg}{\mathsf{I}}
\newcommand{\der}{\mathsf{D}^{\mathrm{f}}_\Box}
\newcommand{\ltensor}{\otimes^\mathbf{L}}
\newcommand{\rhom}{\operatorname{\mathbf{R}Hom}}
\newcommand{\lfrob}{\mathbf{L}F}
\newcommand{\rherzog}{\mathbf{R}G}
\newcommand{\id}{\operatorname{id}}
\newcommand{\XXX}{\mathfrak{X}}
\newcommand{\YYY}{\mathfrak{Y}}
\newcommand{\ppp}{\mathfrak{p}}
\newcommand{\qqq}{\mathfrak{q}}
\newcommand{\aaa}{\mathfrak{a}}
\newcommand{\mmm}{\mathfrak{m}}
\newcommand{\NN}{\mathbb{N}}
\newcommand{\ZZ}{\mathbb{Z}}
\newcommand{\QQ}{\mathbb{Q}}
\newcommand{\comp}[1]{{#1}^c}
\newcommand{\compcomp}[1]{{#1}^{cc}}
\newcommand{\shift}{\mathsf{\Sigma}}
\newcommand{\toeq}{\overset{\simeq}{\longrightarrow}}
\newcommand{\oteq}{\overset{\simeq}{\longleftarrow}}
\renewcommand{\epsilon}{\varepsilon}
\renewcommand{\phi}{\varphi}
\renewcommand{\tilde}{\widetilde}
\renewcommand{\leq}{\leqslant}
\renewcommand{\geq}{\geqslant}
\theoremstyle{plain}
\newtheorem{theo}[subsection]{Theorem}
\newtheorem{lemm}[subsection]{Lemma}
\newtheorem{coro}[subsection]{Corollary}
\newtheorem{prop}[subsection]{Proposition}
\theoremstyle{definition}
\newtheorem{defi}[subsection]{Definition}
\newtheorem{rema}[subsection]{Remark}
\theoremstyle{remark}
\title{Dualities and intersection multiplicities}
\thanks{Preliminary version, \today}
\author{Anders J.\ Frankild}
\address{Anders J.\ Frankild\\ Department of Mathematical Sciences\\ University of Copenhagen\\ Universitetsparken 5\\
  2100 K{\o}benhavn \O\\ Denmark.}
\email{frankild@math.ku.dk}
\urladdr{http://www.math.ku.dk/~frankild/}
\author{Esben Bistrup Halvorsen}
\address{Esben Bistrup Halvorsen\\ Department of Mathematical Sciences\\ University of Copenhagen\\ Universitetsparken 5\\
  2100 K{\o}benhavn \O\\ Denmark.}
\email{esben@math.ku.dk}
\urladdr{http://www.math.ku.dk/~ebh/}
\thanks{The first author is a Steno stipendiat supported by FNU, the
  Danish Research Council}
\thanks{The second author is partially supported by FNU, the Danish
  Research Council }
\subjclass[2000]{Primary 13A35, 13D22, 13H15, 14F17}
\keywords{Frobenius endomorphism, Frobenius functors, Dutta multiplicity, intersection multiplicity}
\begin{document}
\begin{abstract}
Let $R$ be a commutative, noetherian, local ring. Topological $\QQ$--vector spaces
modelled on full subcategories of the derived category of $R$ are
constructed in order to study intersection multiplicities. 
\end{abstract}
\maketitle
\section{Introduction}
Let $R$ be a commutative, noetherian, local ring and let $X$ and $Y$
be homologically bounded complexes over $R$ with finitely generated
homology and supports intersecting at the maximal ideal. When the
projective dimension of $X$ or $Y$ is finite, their intersection
multiplicity is defined as
\[
\chi(X,Y)=\chi(X\ltensor_R Y),
\]
where $\chi(-)$ denotes the \emph{Euler characteristic} defined as the
alternating sum of the lengths of the homology modules. When $X$ and
$Y$ are modules, this definition agrees with the intersection
multiplicity defined by Serre~\cite{serre}.

The ring $R$ is said to  \emph{satisfy vanishing} when 
\begin{equation*}
\chi(X,Y) =0  \quad\text{provided $\dim(\supp X)+\dim(\supp Y)<\dim R$.}
\end{equation*}
If the above holds under the restriction that both complexes have
finite projective dimension, $R$ is said to \emph{satisfy weak
  vanishing}.

Assume, in addition, that $\dim(\supp
X)+\dim(\supp Y)\leq \dim R$ and that $R$ has prime characteristic
$p$. The \emph{Dutta multiplicity} of $X$ and $Y$ is defined when $X$
has finite projective dimension as the limit 
\[
\chi_\infty(X,Y) = \lim_{e\to\infty} \frac{1}{p^{e\codim(\supp
    X)}}\chi(\lfrob^e(X),Y),
\]
where $\lfrob^e$ denotes the $e$-fold composition of the left-derived
Frobenius functor; the Frobenius functor $\frob$ was systematically used in
the classical work by Peskine and Szpiro~\cite{peskineszpiro}. When $X$
and $Y$ are modules, $\chi_ \infty(X,Y)$ is the usual Dutta
multiplicity; see Dutta~\cite{duttamultiplicity}.

Let $\XXX$ be a specialization-closed subset of $\spec R$ and let
$\der(\XXX)$ denote the full subcategory of the derived category of $R$
comprising the homologically bounded complexes with finitely
generated homology and support contained in $\XXX$. The symbols
$\proj(\XXX)$ and $\inj(\XXX)$ denote the full subcategories of
$\der(\XXX)$ comprising the complexes that are isomorphic to a complex
of projective or injective modules, respectively.
The \emph{Grothendieck spaces}  $\grot\der(\XXX)$, $\grot\proj(\XXX)$ and 
$\grot\inj(\XXX)$ are topological $\QQ$--vector spaces modelled on these
categories. 
The first two of these spaces were introduced
in~\cite{halvorsenF} but were there 
modelled on ordinary non-derived categories of complexes. The
construction of Grothendieck spaces is similar to that of Grothendieck
groups but targeted at the study of intersection multiplicities.

The main
result of~\cite{halvorsenF} is a diagonalization theorem in prime
characteristic $p$ for the automorphism on $\grot\proj(\XXX)$ induced by the Frobenius
functor. A consequence of
this theorem is that every 
element $\alpha\in\grot\proj(\XXX)$ can be decomposed as
\[
\alpha=\alpha^{(0)}+\alpha^{(1)}+\cdots +\alpha^{(u)},
\]
where the component of degree zero describes the Dutta multiplicity,
whereas the components of 
higher degree describe the extent to which vanishing fails to hold for
the the
intersection multiplicity. This paper presents (see Theorem~\ref{injmaintheo}) a
similar diagonalization theorem for a functor that is analogous to the
Frobenius functor and has been studied by Herzog~\cite{herzog}. A
consequence is that every element $\beta\in\grot\inj(\XXX)$ can be
decomposed as
\[
\beta=\beta^{(0)}+\beta^{(1)}+\cdots +\beta^{(v)},
\]
where the component of degree zero describes an analog of the Dutta
multiplicity, whereas the components of higher degree describe the
extent to which vanishing fails to hold for the Euler form, introduced
by Mori and Smith~\cite{morismith}. Another consequence  (see 
Theorem~\ref{suffcondforweakvan}) is that $R$ satisfies weak vanishing
if only the Euler characteristic of homologically bounded complexes
with finite-length homology changes by a factor $p^{\dim R}$ when the analogous Frobenius functor
is applied. 

The duality functor $(-)^*=\rhom_R(-,R)$ on $\proj(\XXX)$ induces
an automorphism on $\grot\proj(\XXX)$ which in prime characteristic
$p$ is given by (see Theorem~\ref{dualitytheo})
\[
(-1)^{\codim\XXX}\alpha^*=\alpha^{(0)}-\alpha^{(1)}+\cdots +(-1)^u\alpha^{(u)}.
\]
Even in arbitrary characteristic, $R$ satisfies vanishing
if and only if all elements $\alpha\in\grot\proj(\XXX)$ are
\emph{self-dual} in the sense that 
$\alpha=(-1)^{\codim\XXX}\alpha^*$; and $R$ satisfies weak vanishing
if all elements $\alpha\in\grot\proj(\XXX)$ are \emph{numerically self-dual},
meaning that $\alpha-(-1)^{\codim\XXX}\alpha^*$ is in the
kernel of the homomorphism $\grot\proj(\XXX)\to\grot\der(\XXX)$
induced by the inclusion of the underlying categories (see
Theorem~\ref{selfdualityforvanishing}). Rings for which all elements of
the Grothendieck spaces $\grot\proj(\XXX)$ are
numerically self-dual include Gorenstein rings of dimension less than or equal to five
(see Proposition~\ref{Rgorenstein}) and
complete intersections (see Proposition~\ref{Rconditions} together with~\cite[Example~33]{halvorsenF}).

\section*{Notation}
\label{Notation}
Throughout,
$R$ denotes a commutative, noetherian, local ring with unique
maximal ideal $\mmm$ and residue field $k=R/\mmm$. Unless otherwise
stated, modules and complexes are assumed to be $R$--modules and
$R$--complexes, respectively.

\section{Derived categories and functors}
In this section we review notation and results from the theory of
derived categories, and we introduce a new star duality and derived
versions of the Frobenius functor and its natural analog.   For
details on the derived category and derived functors,
consult~\cite{gelfandmanin,hartshorne,verdier}. 

\subsection{Derived categories}
  A complex $X$ is a sequence $(X_i)_{i \in \ZZ}$ of
  modules equipped with a differential $(\partial^X_i)_{i\in\ZZ}$ lowering the homological
  degree by one.
  The homology complex $\H(X)$ of $X$ is the complex whose
  modules are 
\[
\H(X)_i=\H_i(X)=\ker \partial^X_i/\im\partial_{i+1}
\]
and whose differentials are trivial.

  A morphism of complexes $\sigma \colon X \to Y$ is a family
  $(\sigma_i)_{i \in \ZZ}$ of homomorphisms commuting with the
  differentials in $X$ and $Y$. The morphism of complexes $\sigma$ is a
  \emph{quasi-isomorphism} if the induced map on homology
  $\H_i(\sigma) \colon \H_i(X) \to \H_i(Y)$ is an isomorphism in every
  degree. Two morphisms of complexes $\sigma,\rho\colon X\to Y$ are
  \emph{homotopic} if there exists a family $(s_i)_{i\in\ZZ}$ of maps $s_i \colon X_i \to
  Y_{i+1}$ such that
  \begin{equation*}
     \sigma_i - \rho_i = \partial^Y_{i + 1}s_i + s_{i-1}\partial^X_i.
  \end{equation*}
  Homotopy yields an equivalence relation in the group $\hom_R(X,Y)$ of
  morphisms of complexes, and the \emph{homotopy category}
  $\mathsf{K}(R)$ is obtained from the category of complexes
  $\mathsf{C}(R)$ by declaring
  \begin{equation*}
    \hom_{\mathsf{K}(R)}(X,Y) = \hom_{\mathsf{C}(R)}(X,Y)/\,\text{homotopy}.
  \end{equation*}
  The collection $S$ of quasi-isomorphisms in the triangulated 
  category $\mathsf{K}(R)$ form a multiplicative system of morphisms.
  The \emph{derived category} $\derg(R)$ is obtained by (categorically)
  localizing $\mathsf{K}(R)$ with respect to $S$. Thus,
  quasi-isomorphisms become isomorphisms in $\derg(R)$; in the sequel,
  they are denoted $\simeq$.
  
  Let $n$ be an integer. The symbol $\shift^n{X}$ denotes the complex
  $X$ shifted (or translated or suspended) $n$ degrees to the left;
  that is, against the direction of the differential. The modules in
  $\shift^n{X}$ are given by $(\shift^{n}X)_i = X_{i-n}$, and the
  differentials are $\partial^{\shift^n X}_i =
  (-1)^{n}\partial^X_{i-n}$. The symbol $\sim$ denotes
  isomorphisms up to a shift in the derived category.

  The full subcategory of $\derg(R)$ consisting of complexes with
  bounded, finitely generated homology is denoted
  $\der(R)$. Complexes from $\der(R)$ are called \emph{finite}
  complexes. The symbols $\proj(R)$ and $\inj(R)$ denote the full
  subcategories of $\der(R)$ consisting of complexes that are
  isomorphic in the derived category to a bounded complex of projective modules and
  isomorphic to a bounded complex of injective modules,
  respectively.
  Note that $\proj(R)$ coincides with the full subcategory
  $\fla(R)$ of $\der(R)$ consisting of
  complexes isomorphic to a complex of flat modules.

\subsection{Support}
The \emph{spectrum} of $R$, denoted $\spec R$, is the set of prime
ideals of $R$. A subset $\XXX$ of $\spec R$ is
\emph{specialization-closed} if 
it has the property
\[
\ppp\in\XXX \text{ and }\ppp\subseteq\qqq\implies \qqq\in\XXX
\]
for all prime ideals $\ppp$ and $\qqq$.
A subset that is closed in the Zariski topology is, in particular, specialization-closed. 

The \emph{support} of a complex $X$ is the set
\[
    \supp X = \Big\{\,\ppp \in \spec R\,\Big\vert\, 
                      \H(X_{\ppp}) \neq 0 \,\Big\}.
\]
A \emph{finite} complex is a complex with bounded homology and finitely
generated homology modules; the support of such a complex is a closed and
hence specialization-closed subset
of $\spec R$. 

For a specialization-closed subset $\XXX$ of $\spec R$, the \emph{dimension} of $\XXX$,
denoted $\dim\XXX$, is the usual Krull dimension of $\XXX$. When $\dim
R$ is finite, the \emph{co-dimension} of $\XXX$, denoted $\codim\XXX$,
is the number $\dim R -\dim\XXX$. For a finitely generated module $M$,
the dimension and co-dimension of $M$, denoted $\dim M$ and $\codim
M$, are the dimension and co-dimension of the support of $M$. 

    For a specialization-closed subset
  $\XXX$ of $\spec R$, the symbols $\der(\XXX)$, $\proj(\XXX)$, and
  $\inj(\XXX)$ denote the full subcategories of $\der(R)$, $\proj(R)$,
  and $\inj(R)$, respectively, consisting of complexes whose support is contained
  in $\XXX$. In the case where $\XXX$ equals $\{\mmm\}$, we simply write
  $\der(\mmm)$, $\proj(\mmm)$ and $\inj(\mmm)$, respectively.

\subsection{Derived functors}
  A complex $P$ is said to be semi-projective if the functor $\hom_R(P,-)$
  sends surjective quasi-isomorphisms to surjective
  quasi-isomorphisms. If a complex is bounded to the right and
  consists of projective modules, it is semi-projective. A
  semi-projective resolution of $M$ is a quasi-isomorphism $\pi\colon
  P\to X$ where $P$ is semi-projective.
  
  Dually, a complex $I$ is said to be semi-injective if the functor
  $\hom_R(-,I)$ sends injective quasi-isomorphisms to surjective
  quasi-isomorphisms. If a  complex is bounded to the left and
  consists of injective modules, it is semi-injective. A semi-injective resolution of $Y$ is a
  quasi-isomorphism $\iota\colon Y\to I$ where $I$ is semi-injective.
  For existence of semi-projective and semi-injective resolutions see~\cite{DGA}.
  
  Let $X$ and $Y$ be complexes. The left-derived tensor product
  $X\ltensor_R Y$ in $\derg(R)$ of $X$ and $Y$ is defined by
  \begin{equation*}
     P \otimes_R Y \simeq X\ltensor_R Y \simeq X \otimes_R Q ,
  \end{equation*} 
  where $P \toeq X$ is a semi-projective resolution of $X$ and $Q \toeq
  Y$ is a semi-projective resolution of $Y$. The right-derived
  homomorphism complex $\rhom_R(X,Y)$ in $\derg(R)$ of $X$ and $Y$ is
  defined by 
  \begin{equation*}
    \hom_R(P,Y) \simeq \rhom_R(X,Y) \simeq \hom_R(X,I) ,
  \end{equation*} 
  where $P\toeq X$ is a semi-projective resolution of $X$ and $Y
  \toeq I$ is a semi-injective resolution of $Y$.  
  When $M$ and $N$ are modules,
  \begin{equation*}
  \H_n(M\ltensor_R N) \cong \tor^R_n(M,N)\quad\text{and}\quad 
  \H_{-n}(\rhom_R(M,N)) \cong \ext_R^n(M,N) 
  \end{equation*}
  for all integers $n$.

\subsection{Stability}
Let $\XXX$ and $\YYY$ be specialization-closed subsets of $\spec R$ and let $X$ be a
complex in $\der(\XXX)$ and $Y$ be a complex in $\der(\YYY)$. Then
\begin{align}\label{derfunc}
\begin{split}
X\ltensor_R Y \in\der(\XXX\cap\YYY) &\quad\text{if $X\in\proj(\XXX)$
  or $Y\in\proj(\YYY)$,}\\
X\ltensor_R Y \in\proj(\XXX\cap\YYY) &\quad\text{if $X\in\proj(\XXX)$
  and $Y\in\proj(\YYY)$,}\\
X\ltensor_R Y \in\inj(\XXX\cap\YYY) &\quad\text{if $X\in\proj(\XXX)$
  and $Y\in\inj(\YYY)$,}\\
X\ltensor_R Y \in\inj(\XXX\cap\YYY) &\quad\text{if $X\in\inj(\XXX)$
  and $Y\in\proj(\YYY)$,}\\ 
\rhom_R(X,Y) \in\der(\XXX\cap\YYY) &\quad\text{if $X\in\proj(\XXX)$
  or $Y\in\inj(\YYY)$,}\\
\rhom_R(X,Y) \in\proj(\XXX\cap\YYY) &\quad\text{if $X\in\proj(\XXX)$
  and $Y\in\proj(\YYY)$,}\\
\rhom_R(X,Y) \in\inj(\XXX\cap\YYY) &\quad\text{if $X\in\proj(\XXX)$
  and $Y\in\inj(\YYY)$\rlap{ and}}\\
\rhom_R(X,Y) \in\proj(\XXX\cap\YYY) &\quad\text{if $X\in\inj(\XXX)$
  and $Y\in\inj(\YYY)$.}
\end{split}
\end{align}

\subsection{Functorial isomorphisms}\label{iso}
  Throughout, we will make use of the functorial isomorphisms stated
  below. As we will not need them in the most general setting, the
  reader should bear in mind that not all the boundedness conditions
  imposed on the complexes are strictly necessary. For details the
  reader is referred e.g., to~\cite[A.4]{christensen} and the references
  therein.

  Let $S$ be another commutative, noetherian, local ring. Let
  $K,L,M\in\derg(R)$, let $P\in\derg(S)$ and let $N\in\derg(R,S)$, 
  the derived category of $R$--$S$--bi-modules.  There are the next functorial
  isomorphisms in $\derg(R,S)$.
  \begin{alignat*}{1}
    M\ltensor_R N & \toeq N\ltensor_R M. \tag{Comm}\\
    (M\ltensor_R N)\ltensor_S P&\toeq  M\ltensor_R(N \ltensor_S P). \tag{Assoc}\\
    \rhom_S(M\ltensor_R N,P) & \toeq \rhom_R(M,\rhom_S(N,P)). \tag{Adjoint}\\
    \rhom_R(M,\rhom_S(P,N)) & \toeq \rhom_S(P,\rhom_R(M,N)). \tag{Swap}
  \end{alignat*}
  Moreover, there are the following evaluation morphisms.
  \begin{alignat*}{1}
    \sigma_{KLP}\colon \rhom_R(K,L)\ltensor_S P
    &\to\rhom_R(K,L\ltensor_S P).
    \tag{Tensor-eval}\\
    \rho_{PLM}\colon P\ltensor_S \rhom_R(L,M) & \to\rhom_S(\rhom_R(P,L),M). 
    \tag{Hom-eval}
  \end{alignat*}
  In addition,
  \begin{itemize}
  \item the morphism $\sigma_{KLP}$ is invertible if $K$ is
    finite, $\H(L)$ is bounded, and either $P\in\projg(S)$ or
    $K\in\projg(R)$;\text{ and}
  \item the morphism $\rho_{PLM}$ is invertible if $P$ is
    finite, $\H(L)$ is bounded, and either $P\in\projg(R)$ or $M\in\injg(R)$.
  \end{itemize}

\subsection{Dualizing complexes}\label{dualizing}
  A finite complex $D$ is a \emph{dualizing complex} for
  $R$ if 
  \begin{equation*}
    D\in\inj(R) \quad\text{and}\quad R\toeq\rhom_R(D,D).
  \end{equation*}
  Dualizing complexes are essentially unique: if $D$ and $D'$ are
  dualizing complexes for $R$, then $D\sim D'$.
  To check whether a finite complex $D$ is dualizing is equivalent to
  checking whether
  \begin{equation*}
    k \sim \rhom_R(k,D).
  \end{equation*}
  A dualizing complex $D$ is said to be \emph{normalized} when $k \simeq
  \rhom_R(k,D)$.
  If $R$ is a Cohen--Macaulay ring of dimension $d$ and $D$ is a
  normalized dualizing complex, then $\H(D)$ is concentrated in degree
  $d$, and the module $\H_d(D)$ is the (so-called)
  \emph{canonical module}; see~\cite[Chapter~3]{brunsherzog}. 
 Observe that $\supp D=\spec R$.
  
  If $D$ is a normalized dualizing complex for $R$, then it is
  isomorphic to a complex
  \begin{equation*}
    0 \to D_{\dim R} \to D_{\dim R - 1} \to \cdots 
      \to D_1 \to D_0 \to 0
  \end{equation*}
  consisting of injective modules, where
  \begin{equation*}
    D_i =  \!\!\!\!\!\bigoplus_{\dim R/\ppp = i} \!\!\!\!\! E_R(R/\ppp)
  \end{equation*}
  and $E_R(R/\ppp)$ is the injective hull (or envelope) of $R/\ppp$ for a
  prime ideal $\ppp$; in particular, it follows that $D_0 = E_R(k)$.
  
  When $R$ is a homomorphic image of a local Gorenstein ring $Q$, then
  the $R$--complex $\shift^n\rhom_Q(R,Q)$, where $n=\dim Q-\dim R$, is a normalized dualizing
  complex over $R$.
  In particular, it follows from Cohen's structure theorem for
  complete local rings that any complete ring admits a dualizing
  complex.  Conversely, if a local ring admits a dualizing complex,
  then it must be a homomorphic image of a Gorenstein ring; this
  follows from Kawasaki's proof of Sharp's conjecture; see~\cite{kawasaki}.

\subsection{Dagger duality}\label{dagger}
  Assume that $R$ admits a normalized dualizing complex $D$ and
  consider the duality morphism of functors 
  \begin{equation*}
    \id_{\derg(R)} \to \rhom_R(\rhom_R(-,D),D).
  \end{equation*}
  It follows essentially from (Hom-eval) that the contravariant
  functor
  \begin{equation*}
  (-)^{\dagger} = \rhom_R(-,D)
  \end{equation*}
  provides a duality on the category $\der(R)$ which restricts to a
  duality between $\proj(R)$ and $\inj(R)$.  This duality is sometimes
  referred to as \emph{dagger duality}. According to~\eqref{derfunc}, if $\XXX$ is a
  specialization-closed subset of $\spec R$, then dagger duality gives a
  duality on $\der(\XXX)$ which restricts to a duality   between
  $\proj(\XXX)$ and $\inj(\XXX)$ as described by the following commutative
  diagram.
  \begin{displaymath}
    \xymatrix@C+50pt@R+20pt{
    {\der(\XXX)} \ar@<0.7ex>[r]^-{(-)^{\dagger}}
                & 
    {\der(\XXX)}\ar@<0.7ex>[l]^-{(-)^{\dagger}}\\
    {\proj(\XXX)}
    \ar@<0.7ex>[r]^-{(-)^{\dagger}} \ar[u]
                & 
    {\inj(\XXX).}\ar@<0.7ex>[l]^-{(-)^{\dagger}}\ar[u]\\
    }
  \end{displaymath}
  Here the vertical arrows are full embeddings of categories. For
  more details on dagger duality, see~\cite{hartshorne}.

\subsection{Foxby equivalence}\label{foxby}
  Assume that $R$ admits a normalized dualizing complex $D$ and consider the
 two contravariant adjoint functors
  \begin{equation*}
    D\ltensor_R - \quad\text{ and }\quad \rhom_R(D,-) ,
  \end{equation*}
  which come naturally equipped the unit and co-unit morphisms
  \begin{equation*}
    \eta \colon \id_{\derg(R)} \to \rhom_R(D,D\ltensor_R -) \quad\text{ and }\quad
    \epsilon \colon D\ltensor_R \rhom_R(D,-) \to \id_{\derg(R)}.
  \end{equation*}
  It follows essentially from an application of (Tensor-eval) and
  (Hom-eval) that the categories $\projg(R)$ and $\injg(R)$ are
  naturally equivalent via the above two functors. This
  equivalence is usually known as \emph{Foxby equivalence} and was
  introduced in~\cite{avramovfoxby}, to which the reader is referred for
  further details.
  
  According to~\eqref{derfunc}, for a specialization-closed subset $\XXX$ of $\spec R$, Foxby
  equivalence restricts to an equivalence between $\proj(\XXX)$ and
  $\inj(\XXX)$ as described by the following diagram.
  \begin{displaymath}
    \xymatrix@C+50pt@R+20pt{
      {\proj(\XXX)} \ar@<0.7ex>[r]^-{D\ltensor_R -}
      & {\inj(\XXX) .} \ar@<0.7ex>[l]^-{\rhom_R(D,-)}
    }
  \end{displaymath}

\subsection{Star duality}\label{star}
 Consider the duality morphism of functors
  \begin{equation*}
    \id_{\derg(R)} \to \rhom_R(\rhom_R(-,R),R).
  \end{equation*}
  From an application of (Hom-eval) it is readily seen that the functor
  \begin{equation*}
    (-)^* = \rhom_R(-,R)
  \end{equation*}
  provides a duality on the category $\proj(R)$. According to~\eqref{derfunc}, for a specialization-closed subset $\XXX$ of $\spec R$, 
star duality restricts to a duality on $\proj(\XXX)$ as described by following diagram.
  \begin{displaymath}
    \xymatrix@C+50pt@R+20pt{
      {\proj(\XXX)} \ar@<0.7ex>[r]^-{(-)^*}
      & {\proj(\XXX).} \ar@<0.7ex>[l]^-{(-)^*}
   }
  \end{displaymath}
  When $R$ admits a dualizing complex $D$, the star 
  functor can also be described in terms of the dagger and
  Foxby functors. Indeed, it is straightforward to show that the
  following three contravariant  endofunctors on $\proj(R)$ are isomorphic.
   \begin{equation*}
      (-)^* , \quad \rhom_R(D,-^\dagger) , \quad  \text{and}\quad (D\ltensor_R -)^{\dagger}.
   \end{equation*}
  It is equally straightforward to show that the following four contravariant
  endofunctors on $\inj(R)$ are isomorphic.
  \begin{equation*}
      (-)^{\dagger\,*\,\dagger}, \quad \rhom_R(D,-)^{\dagger} , \quad
      D\ltensor_R(\rhom_R(D,-)^*) \quad\text{and}\quad D\ltensor_R (-)^{\dagger}.
  \end{equation*}
 They provide a duality on $\inj(R)$. 
  In the sequel, the four isomorphic functors are denoted $(-)^{\star}$.
  According to~\eqref{derfunc}, for a specialization-closed subset $\XXX$ of $\spec R$, this new
  kind of star duality restricts to a duality on $\inj(\XXX)$ as
  described by the following diagram.
  \begin{displaymath}
    \xymatrix@C+50pt@R+20pt{
      {\inj(\XXX)} \ar@<0.7ex>[r]^-{(-)^\star}
      & {\inj(\XXX)}. \ar@<0.7ex>[l]^-{(-)^\star} 
      }
  \end{displaymath}
  
  The dagger duality, Foxby equivalence and star duality functors fit
  together in the following diagram.
\begin{equation}\label{dualitydiagram}
\begin{split}
\xymatrix@R+25pt@C=-12pt{
& {\der(\XXX)} \ar@<.6ex>[rr]^-{(-)^\dagger} & {\hspace{150pt}} & {\der(\XXX)}
\ar@<.6ex>[ll]^-{(-)^\dagger} \\
{\phantom{|}} \ar@(dl,ul)[]^{(-)^*}  & {\proj(\XXX)}  
\ar[u]  
\ar@<.6ex>[rr]^-{(-)^\dagger}
\ar@/^20pt/^-{D\ltensor_R -}[rr] & & {\,\inj(\XXX)} \ar[u]
\ar@<.6ex>[ll]^-{(-)^\dagger} \ar@/^20pt/[ll]^-{\rhom_R(D,-)} & {\phantom{|}} \ar@(dr,ur)[]_{(-)^\star}
}
\end{split}
\end{equation}
  In the lower part of the diagram, the three types of functors,
  dagger, Foxby and star, always commute pairwise, and the composition
  of two of the three types yields a functor of the third type. For
  example, star duality and dagger duality always commute and compose
  to give Foxby equivalence, since we have
  \[
  (-)^{*\dagger}\simeq (-)^{\dagger\star}\simeq D\ltensor_R -
  \quad\text{and}\quad (-)^{\star\dagger}\simeq (-)^{\dagger *} \simeq
  \rhom_R(D,-).
  \]

\subsection{Frobenius endofunctors}\label{endofunctors}
  Assume that $R$ is complete of prime characteristic $p$ and with perfect
  residue field $k$.  The
  endomorphism
  \begin{equation*}
    f \colon R \to R \quad\text{\, defined by \,}\quad f(r) = r^p
  \end{equation*} 
  for $r \in R$ is called the \emph{Frobenius endomorphism} on $R$.
  The $n$-fold composition of $f$, denoted $f^n$,
  operates on a generic element $r \in R$ by $f^n(r) = r^{p^n}$.
  We let $\f{n}R$ denote the $R$--algebra which, as a ring, is
  identical to $R$ but, as a module, is viewed through $f^n$. Thus,
  the $R$--module structure on $\f{n}R$ is given by
  \begin{equation*}
    r\cdot x =  r^{p^n}x \quad \text{for $r\in R$ and $x\in\f{n}R$} .
  \end{equation*}
  Under the present assumptions on $R$, the $R$--module $\f{n}R$
  is finitely generated (see, for example, Roberts~\cite[Section~7.3]{roberts}).
  
  We define two functors from the category of $R$--modules to the
  category of $\f{n}R$--modules by
  \begin{equation*}
    \frob^n(-)    = -\otimes_R \f{n}R \quad\text{\, and \,}\quad
    \herzog^n(-)  = \hom_R(\f{n}R,-),
  \end{equation*}
  where the resulting modules are finitely generated modules with
  $R$--structure obtained from the ring $\f{n}R=R$.  The functor
  $\frob^n$ is called the \emph{Frobenius functor} and 
  has been studied by Peskine and Szpiro~\cite{peskineszpiro}. The
  functor $\herzog^n$ has been studied by Herzog~\cite{herzog} and is
  analogous to $\frob^n$ in a sense that will be described below. We
  call this the \emph{analogous Frobenius functor}. The
  $R$--structure on $\frob^n(M)$ is given by
\[
r\cdot (m\otimes x) = m\otimes rx 
\]
for $r\in R$, $m\in M$ and $x\in\f{n}R$, and the $R$--structure on
$\herzog^n(N)$ is given by
\[
(r\cdot\phi)(x)= \phi(rx) 
\]
for $r\in R$, $\phi\in\hom_R(\f{n}R,N)$ and $x\in\f{n}R$. 
Note that here we also have
\[
(rm)\otimes x = m\otimes (r\cdot x)= m\otimes r^px\quad\text{and}\quad
r\phi(x) = \phi(r\cdot x) = \phi(r^px).
\]
Peskine and Szpiro~\cite[Th\'eor\`eme~(1.7)]{peskineszpiro} have
proven that, if $M$ has finite projective dimension, then so does
$\frob(M)$, and Herzog~\cite[Satz~5.2]{herzog} has proven that, if
$N$ has finite injective dimension, then so does $\herzog(N)$.

  It follows
  by definition that the functor $\frob^n$ is right-exact while the
  functor $\herzog^n$ is left-exact. We denote by $\lfrob^n(-)$ the
  left-derived of $\frob^n(-)$ and by $\rherzog^n(-)$ the
  right-derived of $\herzog^n(-)$. When $X$ and $Y$ are $R$--complexes
  with semi-projective and semi-injective resolutions 
  \begin{equation*}
   P \toeq X \quad\text{\, and \,}\quad Y \toeq I,
  \end{equation*}
  respectively, these derived functors are obtained as
  \begin{equation*}
    \lfrob^n(X)    = P\otimes_R \f{n}R \quad\text{\, and \,}\quad
    \rherzog^n(Y)  = \hom_R(\f{n}R,I),
  \end{equation*}
  where the resulting complexes are viewed through their
  $\f{n}R$--structure, which makes them $R$--complexes since $\f{n}R$
  as a ring is just $R$.  Observe that we may identify these functors
  with
  \begin{equation*}
     \lfrob^n(X)  = X \ltensor_R \f{n}R \quad\text{\, and \,}\quad
     \rherzog^n(Y)= \rhom_R(\f{n}R,Y). 
  \end{equation*}

\begin{lemm}\label{frobherzog}
  Let $R$ be a complete ring of prime characteristic and with perfect
  residue field, and let $\XXX$ be a specialization-closed subset of $\spec R$. Then
  the Frobenius functors commute with dagger and star duality in the
  sense that
  \begin{alignat*}{2}
    \lfrob^n(-)^{\dagger} &\simeq \rherzog^n(-^{\dagger}),
    & \rherzog^n(-)^{\dagger} &\simeq \lfrob^n(-^{\dagger}), \\
    \lfrob^n(-)^* &\simeq \lfrob^n(-^*) &\quad\text{and}\quad
    \rherzog^n(-)^\star &\simeq \rherzog^n(-^\star). 
  \end{alignat*}
  Here the first row contains isomorphisms of functors between $\proj(\XXX)$
  and $\inj(\XXX)$, while the second row contains isomorphisms of
  endofunctors on $\proj(\XXX)$ and $\inj(\XXX)$, respectively. Finally, the
  Frobenius functors commute with Foxby equivalence in the sense that
  \begin{align*}
    D\ltensor_R\lfrob^n(-) &\simeq \rherzog^n(D\ltensor_R -) \quad\text{and}\quad\\
    \rhom_R(D,\rherzog^n(-)) &\simeq \lfrob^n(\rhom_R(D,-)) 
  \end{align*}
  as functors from $\proj(\XXX)$ to $\inj(\XXX)$ and from
  $\inj(\XXX)$ to $\proj(\XXX)$, respectively.
\end{lemm}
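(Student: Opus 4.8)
The plan is to reduce all six isomorphisms to the functorial isomorphisms of \S\ref{iso}, using the identifications $\lfrob^n(-)=-\ltensor_R\f{n}R$ and $\rherzog^n(-)=\rhom_R(\f{n}R,-)$, the descriptions $(-)^\dagger=\rhom_R(-,D)$, $(-)^*=\rhom_R(-,R)$ and $(-)^\star\simeq D\ltensor_R(-)^\dagger$ of \S\ref{star}, and the canonical isomorphism $R\simeq\rhom_R(D,D)$. Throughout, $\f{n}R$ is to be regarded as an object of the derived category of $R$--$R$--bimodules: its Frobenius-twisted structure is the one consumed in forming $-\ltensor_R\f{n}R$ or $\rhom_R(\f{n}R,-)$, while the structure coming from the ring structure of $\f{n}R=R$ is the one surviving on the output. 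Two facts make the evaluation morphisms we need invertible: $\f{n}R$ is a \emph{finite} $R$--module, and $D$ is a bounded complex of injective modules, so $D\in\injg(R)$.

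I would first settle the two dagger isomorphisms, which are the most direct. For $X\in\proj(\XXX)$, (Adjoint) gives $\lfrob^n(X)^\dagger=\rhom_R(X\ltensor_R\f{n}R,D)\simeq\rhom_R(X,\rhom_R(\f{n}R,D))$, and (Swap) rewrites this as $\rhom_R(\f{n}R,\rhom_R(X,D))=\rherzog^n(X^\dagger)$. For $Y\in\inj(\XXX)$, the (Hom-eval) morphism $\f{n}R\ltensor_R\rhom_R(Y,D)\to\rhom_R(\rhom_R(\f{n}R,Y),D)$ is invertible because $\f{n}R$ is finite, $\H(Y)$ is bounded and $D\in\injg(R)$; reading it backwards and applying (Comm) yields $\rherzog^n(Y)^\dagger=\rhom_R(\rhom_R(\f{n}R,Y),D)\simeq\f{n}R\ltensor_R\rhom_R(Y,D)\simeq\rhom_R(Y,D)\ltensor_R\f{n}R=\lfrob^n(Y^\dagger)$.

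The remaining four isomorphisms I would not prove from scratch but bootstrap from the dagger ones. On $\proj(\XXX)$ and $\inj(\XXX)$ the star functors factor as $(-)^*\simeq\rhom_R(D,(-)^\dagger)$ and $(-)^\star\simeq D\ltensor_R(-)^\dagger$, and moreover $(-)^{*\dagger}\simeq(-)^{\dagger\star}\simeq D\ltensor_R-$ and $(-)^{\star\dagger}\simeq(-)^{\dagger*}\simeq\rhom_R(D,-)$. Granting just one of the Foxby isomorphisms, the other Foxby isomorphism and both star isomorphisms then follow formally by composing the two dagger isomorphisms with these factorizations. So the only genuinely new computation is one intertwining statement, say $D\ltensor_R\lfrob^n(-)\simeq\rherzog^n(D\ltensor_R-)$ on $\proj(\XXX)$: after using (Assoc), (Comm) and the invertible instance $\rhom_R(\f{n}R,D)\ltensor_R X\to\rhom_R(\f{n}R,D\ltensor_R X)$ of (Tensor-eval) (invertible since $X\in\projg(R)$) to strip off the projective argument $X$, it reduces to the identification $D\ltensor_R\f{n}R\simeq\rhom_R(\f{n}R,D)$ of bimodules. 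That identification sits between the two ``fixed-point'' facts $\lfrob^n(R)\simeq R$ --- immediate from the action of the Frobenius functor on free modules --- and $\rherzog^n(D)\simeq D$. The latter holds because $f^n\colon R\to R$ is module-finite, so $\rhom_R(\f{n}R,D)$ is a dualizing complex for $R$; it is normalized, as one checks via the adjunction between restriction of scalars along $f^n$ and $\rhom_R(\f{n}R,-)$ together with the perfectness of $k$, hence isomorphic to $D$ by uniqueness of the normalized dualizing complex.

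The step I expect to be the real obstacle is not any individual isomorphism but the bookkeeping underneath: every application of (Adjoint), (Swap), (Tensor-eval) or (Hom-eval) must place $\f{n}R$ in the correct variable and use the correct one of its two $R$--module structures, and every use of an evaluation morphism must be matched with a verification of the pertinent finiteness, boundedness and projectivity (or injectivity) hypotheses. Once this is carried out carefully, \S\ref{iso} and \S\ref{star} supply all six isomorphisms.
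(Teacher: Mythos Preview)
Your argument is correct and runs along the same lines as the paper's: both proofs reduce the dagger isomorphisms to (Adjoint) and its variants, and both tacitly rest on the fact that $\rhom_R(\f{n}R,D)$ is again a normalized dualizing complex for $R$, i.e.\ $\rherzog^n(D)\simeq D$. The paper packages this by passing to a general module-finite local map $\varphi\colon R\to S$, invoking the standard identity $D^S=\rhom_R(S,D^R)$, and then specializing to $S=\f{n}R$; you instead work directly with $\f{n}R$, prove the two dagger statements by hand, and then bootstrap the star and Foxby statements from those plus the single computation $D\ltensor_R\lfrob^n(-)\simeq\rherzog^n(D\ltensor_R-)$, which you correctly reduce (via Tensor-eval, Assoc and Comm) to the bimodule identification $D\ltensor_R\f{n}R\simeq\rhom_R(\f{n}R,D)$ and hence to $\rherzog^n(D)\simeq D$. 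The paper's abstraction to $R\to S$ buys a cleaner way to keep the two $R$--actions on $\f{n}R$ straight, since the ``output'' structure is simply the $S$--structure; your direct route is more self-contained and makes explicit the one nontrivial ingredient---uniqueness of the normalized dualizing complex together with perfectness of $k$---that the paper's phrase ``similar considerations'' leaves to the reader.
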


\begin{proof}
  Let $\varphi \colon R \to S$ be a local homomorphism making $S$ into
  a finitely generated $R$--module, and let $D^R$ denote a normalized
  dualizing complex for $R$. Then $D^S = \rhom_R(S,D^R)$ is a
  normalized dualizing complex for $S$. 
  Pick an
  $R$--complex $X$ and consider the next string of natural
  isomorphisms.
  \begin{align*}
    \rhom_S(X \ltensor_R S, D^S) & \,\,\,= \,\,\rhom_S(X \ltensor_R S,\rhom_R(S,D^R))\\
    & \oteq \rhom_R(X \ltensor_R S,D^R)\\
    & \toeq \rhom_R(S,\rhom_R(X,D^R)).
  \end{align*}
  Here, the two isomorphism follow from (Adjoint). The computation
  shows that
  \begin{equation*}
    (- \ltensor_R S)^{\dagger_S} \simeq \rhom_R(S,-^{\dagger_R})
  \end{equation*}
  in $\derg(S)$. A similar computation using the natural isomorphisms
  (Adjoint) and (Hom-eval) shows that
  \[
   (-)^{\dagger_R}\ltensor_R S \simeq \rhom_R(S,-)^{\dagger_S}.
  \]
  Under the present assumptions, the $n$-fold composition of the
  Frobenius endomorphism $f^n \colon R \to R$ is module-finite map.
  Therefore, the above isomorphisms of functors yield
  \begin{equation*}
    \lfrob^n(-)^{\dagger} \simeq \rherzog^n(-^{\dagger})
    \quad\text{and}\quad  \lfrob^n(-^{\dagger}) \simeq \rherzog^n(-)^{\dagger}.
  \end{equation*}
  Similar considerations establish the remaining isomorphisms of functors.
\end{proof}

\begin{coro}\label{herzog}
  Let $R$ be a complete ring of prime characteristic and with perfect
  residue field, and let $\XXX$ be a specialization-closed subset of $\spec R$. Then
  the Frobenius functor $\rherzog^n$ is an endofunctor on
  $\inj(\XXX)$.
\end{coro}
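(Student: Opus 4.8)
The plan is to transport the statement across dagger duality, reducing it via Lemma~\ref{frobherzog} to the corresponding --- and essentially classical --- fact that $\lfrob^n$ restricts to an endofunctor on $\proj(\XXX)$. To see the latter, note that since $R$ is complete with perfect residue field, $\f{n}R$ is a finitely generated $R$--module, and $\frob^n(R) = R\otimes_R\f{n}R \cong R$ as $R$--modules (via the residual $R$--structure described in~\ref{endofunctors}); hence $\frob^n$ carries finitely generated projective modules to finitely generated projective modules. A complex $X$ in $\proj(\XXX)\subseteq\der(R)$ is isomorphic to a bounded complex $P$ of finitely generated projective modules, which is semi-projective, so $\lfrob^n(X)\simeq P\otimes_R\f{n}R$ is again a bounded complex of finitely generated projective modules; thus $\lfrob^n(X)\in\proj(R)$. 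Moreover, if $\ppp\notin\supp X$, then $P_{\ppp}$ is an exact bounded complex of projective $R_{\ppp}$--modules, hence contractible, so $(P\otimes_R\f{n}R)_{\ppp}\cong P_{\ppp}\otimes_{R_{\ppp}}(\f{n}R)_{\ppp}$ is exact; therefore $\supp\lfrob^n(X)\subseteq\supp X\subseteq\XXX$, and $\lfrob^n(X)\in\proj(\XXX)$. (Finiteness of the projective dimension alone is the theorem of Peskine and Szpiro~\cite[Th\'eor\`eme~(1.7)]{peskineszpiro}.)

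Now let $Y\in\inj(\XXX)$. Since $\f{n}R$ is finite and $Y$ is finite, $\rherzog^n(Y)=\rhom_R(\f{n}R,Y)$ lies in $\der(R)$, so dagger duality may be applied to it. The computation in the proof of Lemma~\ref{frobherzog} --- taking $S=\f{n}R$ in the natural isomorphism $(-)^{\dagger_R}\ltensor_R S \simeq \rhom_R(S,-)^{\dagger_S}$ --- yields a natural isomorphism $\rherzog^n(-)^{\dagger}\simeq\lfrob^n(-^{\dagger})$ of functors on $\derg(R)$; crucially, this is a statement over the whole derived category and does not presuppose where $\rherzog^n$ lands. By~\ref{dagger}, $Y^{\dagger}\in\proj(\XXX)$, so $\lfrob^n(Y^{\dagger})\in\proj(\XXX)$ by the previous paragraph, and evaluating the isomorphism at $Y$ gives $\rherzog^n(Y)^{\dagger}\simeq\lfrob^n(Y^{\dagger})$. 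Applying $(-)^{\dagger}$ once more and using that it is an involution on $\der(R)$, we obtain
\[
\rherzog^n(Y)\simeq\rherzog^n(Y)^{\dagger\dagger}\simeq\lfrob^n(Y^{\dagger})^{\dagger} ,
\]
and since $(-)^{\dagger}$ carries $\proj(\XXX)$ into $\inj(\XXX)$, the right-hand side --- and hence $\rherzog^n(Y)$ --- belongs to $\inj(\XXX)$, as claimed.

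The only substantive step is the first one: keeping $\lfrob^n$ inside $\proj(R)$ and controlling what it does to supports; once that is in place, the conclusion is purely formal. The point that requires care is that the isomorphisms of Lemma~\ref{frobherzog} must be used in the form supplied by its proof --- as natural isomorphisms of functors on $\derg(R)$ --- rather than in the form stated, which already refers to functors on $\inj(\XXX)$ and would render the present argument circular. (Alternatively, one may argue without duality: writing $Y\simeq I$ with $I$ a bounded complex of injective modules, $\rherzog^n(Y)\simeq\hom_R(\f{n}R,I)$ is a bounded complex whose terms $\hom_R(\f{n}R,I_j)$ have finite injective dimension by Herzog~\cite[Satz~5.2]{herzog}, so it lies in $\inj(R)$, and its support is contained in that of $Y$, hence in $\XXX$; but the route through Lemma~\ref{frobherzog} is the natural one and the one the placement of this corollary suggests.)
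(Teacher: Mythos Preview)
Your argument is correct and follows essentially the same route as the paper: both transport the question across dagger duality, invoke Lemma~\ref{frobherzog} to identify $\rherzog^n$ with $(-)^{\dagger}\circ\lfrob^n\circ(-)^{\dagger}$, and then appeal to the fact that $\lfrob^n$ is an endofunctor on $\proj(\XXX)$. Your additional care in noting that the isomorphism from Lemma~\ref{frobherzog} must be read at the level of $\derg(R)$ (as its proof supplies) rather than merely as stated on $\inj(\XXX)$, lest the argument become circular, is well observed and makes explicit a point the paper leaves implicit.
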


\begin{proof}
  From the above lemma, we learn that 
  \begin{equation*}
    \rherzog^n(-) \simeq (-)^{\dagger}\circ\lfrob^n\circ(-)^{\dagger}
  \end{equation*}
  and since $\lfrob^n$ is an endofunctor on $\proj(\XXX)$ the
  conclusion is immediate.
\end{proof}

\begin{lemm}\label{square}
  Let $R$ be a complete ring of prime characteristic and with perfect
  residue field. For complexes $X,X' \in \proj(R)$ and $Y,Y' \in
  \inj(R)$ there are isomorphisms
  \begin{align*}
    \lfrob^n(X \ltensor_R X') &\simeq \lfrob^n(X)\ltensor_R\lfrob^n(X'),\\
    \rherzog^n(X \ltensor_R Y) &\simeq \lfrob^n(X)\ltensor_R\rherzog^n(Y),\\
\rherzog^n(\rhom_R(X,Y))   &\simeq \rhom_R(\lfrob^n(X),\rherzog^n(Y))\\
\lfrob^n(\rhom_R(X,X'))   &\simeq \rhom_R(\lfrob^n(X),\lfrob^n(X'))\rlap{\quad\text{and}}\\
\lfrob^n(\rhom_R(Y,Y'))   &\simeq \rhom_R(\rherzog^n(Y),\rherzog^n(Y'))  . 
  \end{align*}  
\end{lemm}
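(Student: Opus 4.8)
The plan is to deduce all five isomorphisms from the first one, together with Lemma~\ref{frobherzog} and the formal properties of the dualities of Sections~\ref{dagger} and~\ref{star}, rather than computing each separately from the bimodule $\f{n}R$.

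\emph{The first isomorphism.} Recall that $\lfrob^n(-)=-\ltensor_R\f{n}R$ and that $\f{n}R$ is just $R$ as a ring, so $\lfrob^n$ is the derived base‑change functor along the module‑finite homomorphism $f^n\colon R\to R$ — module‑finiteness being precisely where completeness and perfectness of $k$ are used. Derived base change is symmetric monoidal; concretely, choosing semi‑projective resolutions $P\toeq X$ and $P'\toeq X'$, the complex $P\otimes_R P'$ is semi‑projective and computes $X\ltensor_R X'$, while $P\otimes_R\f{n}R$ and $P'\otimes_R\f{n}R$ are semi‑projective over the ring $\f{n}R=R$; then (Assoc), (Comm) and the obvious isomorphism $\f{n}R\otimes_{\f{n}R}\f{n}R\cong\f{n}R$ yield the required natural isomorphism $(P\otimes_R\f{n}R)\otimes_R(P'\otimes_R\f{n}R)\cong(P\otimes_R P')\otimes_R\f{n}R$. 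One must only keep in mind that the outer $\ltensor_R$ on the right‑hand side of the statement is formed over the target copy of $R$, i.e.\ using the $\f{n}R$‑structures that $\lfrob^n$ produces.

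\emph{The remaining four.} Here I would use a short dictionary, all of whose ingredients are available above: by (Tensor‑eval), $\rhom_R(W,-)\simeq W^{*}\ltensor_R-$ for every finite $W\in\proj(R)$, and $W^{**}\simeq W$; dagger duality is involutive, and (Swap) gives $\rhom_R(A^{\dagger},B^{\dagger})\simeq\rhom_R(B,A)$; (Adjoint) rewrites $\rhom_R(M\ltensor_R N,D)$ as $\rhom_R(M,N^{\dagger})$; and Lemma~\ref{frobherzog} moves $(-)^{*}$, $(-)^{\dagger}$ and $(-)^{\star}$ past $\lfrob^n$ and $\rherzog^n$, in particular identifying $\rherzog^n$ with $(-)^{\dagger}\circ\lfrob^n\circ(-)^{\dagger}$. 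For the second isomorphism, with $X\in\proj(R)$ and $Y\in\inj(R)$ so that $X\ltensor_R Y\in\inj(R)$ by~\eqref{derfunc}: write $\rherzog^n(X\ltensor_R Y)\simeq\lfrob^n\bigl((X\ltensor_R Y)^{\dagger}\bigr)^{\dagger}$ by Lemma~\ref{frobherzog}; use (Adjoint) and the dictionary to see $(X\ltensor_R Y)^{\dagger}\simeq X^{*}\ltensor_R Y^{\dagger}$; apply the first isomorphism and Lemma~\ref{frobherzog} to get $\lfrob^n(X^{*}\ltensor_R Y^{\dagger})\simeq(\lfrob^n X)^{*}\ltensor_R\rherzog^n(Y)^{\dagger}$; and finally dagger‑dualize, using (Adjoint), involutivity of $(-)^{\dagger}$ and $W^{**}\simeq W$, to reach $\rhom_R\bigl((\lfrob^n X)^{*},\rherzog^n Y\bigr)\simeq\lfrob^n(X)\ltensor_R\rherzog^n(Y)$. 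The third isomorphism runs identically, starting from $\rhom_R(X,Y)^{\dagger}\simeq X\ltensor_R Y^{\dagger}$; the fourth starts from $\rhom_R(X,X')\simeq X^{*}\ltensor_R X'$; and the fifth from $\rhom_R(Y,Y')\simeq\rhom_R(Y'^{\dagger},Y^{\dagger})\simeq(Y'^{\dagger})^{*}\ltensor_R Y^{\dagger}$ — in each case combining the first isomorphism with Lemma~\ref{frobherzog}.

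\emph{Main obstacle.} The genuine care goes into the first isomorphism and, throughout, into keeping track of which copy of $R$ — equivalently, which $R$‑$R$‑bimodule structure on $\f{n}R$ — is in play, since $\f{n}R$ occurs simultaneously as a twisted module and as the target ring; a bookkeeping slip here changes the statement. Once that is fixed, the only remaining checks are that the invertibility hypotheses for (Tensor‑eval) and (Hom‑eval) hold at each step, which they do because $\f{n}R$ is finite and all complexes occurring lie in $\proj(R)$ or $\inj(R)$, hence in $\projg(R)$ or $\injg(R)$.
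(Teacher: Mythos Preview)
Your proposal is correct and follows essentially the same route as the paper: establish the first isomorphism directly via the monoidal structure of base change along $f^n$ (the paper does this by taking finite free resolutions and citing \cite[Proposition~12(vi)]{halvorsenF}, which is the same content), and then derive the remaining four by combining the first with Lemma~\ref{frobherzog} and the functorial isomorphisms. Your systematic ``dictionary'' (reducing every $\rhom$ to a tensor via $W^{*}\ltensor_R-$ and passing through $(-)^{\dagger}$) makes explicit what the paper leaves to the phrase ``similar considerations,'' but the argument is the same.
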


\begin{proof}
 We prove the first and the third isomorphism. The rest are obtained
 in a similar manner using Lemma~\ref{frobherzog} and the functorial isomorphisms.

 Let $F \toeq X$ and $F' \toeq X'$ be
  finite free resolutions. Then it follows
  \begin{align*}
    \lfrob^n(X \ltensor_R X') &\simeq
    \frob^n(F \otimes_R F') \\
    &\simeq
    \frob^n(F) \otimes_R \frob^n( F') \\
    &\simeq 
    \lfrob^n(X)\ltensor_R\lfrob^n(X').
  \end{align*}
  Here the first isomorphism follows as $F \otimes_R F'$ is isomorphic
  to $X \ltensor_R X'$; the second isomorphism follows from e.g.,
  \cite[Proposition~12(vi)]{halvorsenF}.

  From Corollary~\ref{herzog} we learn that
  \begin{equation*}
    \rherzog^n(Y) \simeq (\lfrob^n(Y^{\dagger}))^{\dagger},
  \end{equation*} 
  and therefore we may compute as follows.
  \begin{align*}
     \rhom_R(\lfrob^n(X),\rherzog^n(Y)) 
     & \simeq
     \rhom_R(\lfrob^n(X),(\lfrob^n(Y^{\dagger}))^{\dagger}) \\
     & \simeq 
     \rhom_R(\lfrob^n(X) \ltensor_R \lfrob^n(Y^{\dagger}),D) \\
     & \simeq
     \rhom_R(\lfrob^n(X \ltensor_R Y^{\dagger}),D) \\
     & \simeq 
     \lfrob^n(X \ltensor_R Y^{\dagger})^{\dagger} \\ 
     & \simeq 
     (\lfrob^n(\rhom_R(X,Y)^{\dagger})^{\dagger} \\
     & \simeq 
     \rherzog^n(\rhom_R(X,Y)). \\    
  \end{align*}
  Here the second isomorphism follows by (Adjoint); the third from the
  first statement in the Lemma; the fourth from definition; the fifth
  isomorphism follows from (Hom-eval); and the last isomorphism
  follows from Corollary~\ref{herzog}.
\end{proof}

\begin{rema}  
  Any complex in $\proj(R)$ is isomorphic to a bounded
  complex of finitely generated, free modules, and it is well-known
  that the Frobenius functor acts on such a complex by simply raising
  the entries in the matrices representing the differentials to the
  $p^n$'th power. To be precise, if $X$ is a complex in the form
  \[
X=   \cdots \longrightarrow R^m\overset{(a_{ij})}{\longrightarrow}
  R^n\longrightarrow\cdots\longrightarrow 0 ,
  \]
  then $\lfrob^n(X)=\frob^n(X)$ is a complex in the form
  \[
  \lfrob^n(X) =    \cdots \longrightarrow R^m\overset{(a^{p^n}_{ij})}{\longrightarrow}
   R^n\longrightarrow\cdots \longrightarrow 0.
  \]
  If $R$ is Cohen--Macaulay with canonical module $\omega$, then it
  follows from dagger duality that any
  complex in $\inj(R)$ is isomorphic to a complex $Y$ in the
  form
  \[
  Y=0\longrightarrow \cdots \longrightarrow \omega^n\overset{(a_{ji})}{\longrightarrow}
   \omega^m\longrightarrow\cdots ,
  \]
  and $\rherzog^n$ acts on $Y$ by raising
  the entries in the matrices
  representing the differentials to the $p^n$'th power, so that
  $\rherzog^n(Y)=\herzog^n(Y)$ is a complex in the form
  \[
  \rherzog^n(Y)=0\longrightarrow  \cdots \longrightarrow \omega^n\overset{(a^{p^n}_{ji})}{\longrightarrow}
   \omega^m\longrightarrow\cdots .
  \]
\end{rema}

\section{Intersection multiplicities}

\subsection{Serre's intersection multiplicity}\label{intmult}
  If $Z$ is a complex in $\der(\mmm)$, then its finitely many homology
  modules all have finite length, and the \emph{Euler characteristic}
  of $Z$ is defined by
  \begin{equation*}
    \chi(Z) = \sum_{i}(-1)^i\length \H_i(Z).
  \end{equation*}
  Let $X$ and $Y$ be finite complexes with
  $\supp X \cap \supp Y = \{\mmm\}$. The \emph{intersection multiplicity}
  of $X$ and $Y$ is defined by
  \begin{equation*}
    \chi(X,Y) = \chi(X\ltensor_R Y)\quad\text{when either $X\in\proj(R)$ or $Y\in\proj(R)$}.
  \end{equation*}
  In  the case where $X$ and $Y$ are finitely generated modules,
  $\chi(X,Y)$ coincides with Serre's intersection multiplicity; see~\cite{serre}.

  Serre's vanishing conjecture can be generalized to the statement that
  \begin{equation}\label{van}
    \chi(X,Y) = 0\quad\text{if $\dim(\supp X)  + \dim(\supp Y ) < \dim R$}
  \end{equation}
when either $X\in\proj(R)$ or $Y\in\proj(R)$. We will say that \emph{$R$ satisfies vanishing} when the above holds;
note that this, in general, is a stronger condition than Serre's
vanishing conjecture for modules.
It is known that $R$ satisfies vanishing in certain cases, for example when $R$
  is regular. However, it does not hold in general, as demonstrated by
  Dutta, Hochster and McLaughlin~\cite{dhm}.

If we require that both $X\in\proj(R)$ \emph{and}
  $Y\in\proj(R)$, condition \eqref{van} becomes weaker. When this
  weaker condition is satisfies, we say that 
  \emph{$R$ satisfies weak vanishing}. It is known that $R$ satisfies
  weak vanishing in many  cases, for example if
  $R$ is a complete intersection; see Roberts~\cite{robertsC} or Gillet and
  Soul{\'e}~\cite{gilletsoule}. There are, so far, no
  counterexamples preventing it from holding in full generality.
  
\subsection{Euler form}
  Let $X$ and $Y$ be finite complexes with $\supp X\cap\supp Y=\{\mmm\}$. The
  \emph{Euler form} of $X$ and $Y$ is defined by
  \begin{equation*}
    \xi(X,Y) = \chi(\rhom_R(X,Y))\quad\text{when either $X\in\proj(R)$ or $Y\in\inj(R)$}.
  \end{equation*}
  In  the case where $X$ and $Y$ are finitely generated modules,
  $\chi(X,Y)$ coincides with the Euler form introduced by Mori and
  Smith~\cite{morismith}.
  
  If $R$ admits a dualizing complex, then from Mori~\cite[Lemma~4.3(1)
  and~(2)]{moriE} and the definition of $(-)^\star$, we
  obtain
  \begin{align}\label{chixiformulas}
  \begin{split}
  \xi(X,Y) &= \chi(X,Y^\dagger)\quad  \text{whenever $X\in\proj(R)$
    or $Y\in\inj(R)$,}
  \\
  \chi(X^*,Y) &= \chi(X,Y^\dagger)\quad \text{whenever
    $X\in\proj(R)$,\rlap{\quad and}} \\
  \xi(X,Y^\star) &= \xi(X^\dagger,Y)\quad \text{whenever
    $Y\in\inj(R)$.}
  \end{split}
  \end{align}
Since the dagger functor does not change supports of complexes, the
first formula in \eqref{chixiformulas} shows that $R$ satisfies
vanishing exactly when
  \begin{equation}\label{xivan}
    \xi(X,Y) = 0\quad\text{if $\dim(\supp X)  + \dim(\supp Y ) < \dim R$}
  \end{equation}
when either $X\in\proj(R)$ or $Y\in\inj(R)$, and that $R$ satisfies
weak vanishing exactly when \eqref{xivan} holds when we  require both
$X\in\proj(R)$ \emph{and} $Y\in\inj(R)$.

\subsection{Dutta multiplicity}\label{duttamultiplicity}
Assume that $R$ is complete of prime characteristic $p$ and with
perfect residue field. Let $X$ and $Y$ be finite complexes with 
\[
\supp X\cap\supp Y=\{\mmm\}\quad\text{and}\quad \dim(\supp
X)+\dim(\supp Y)\leq\dim R.
\]
The \emph{Dutta multiplicity} of $X$ and $Y$ is defined by
  \begin{equation*}
    \chi_\infty(X,Y)= \lim_{e\to\infty}\frac{1}{p^{e\codim(\supp
        X)}}\chi(\lfrob^e(X),Y)\quad \text{when $X\in\proj(R)$.}
  \end{equation*}
When $X$ and $Y$ are finitely generated modules, $\chi_\infty(X,Y)$ coincides with the Dutta
  multiplicity defined in~\cite{duttamultiplicity}.

  The Euler form prompts to two natural analogs of the Dutta
  multiplicity.  We define
  \begin{gather*}
    \xi_\infty(X,Y) = \lim_{e\to\infty}
    \frac{1}{p^{e\codim(\supp Y)}}\xi(X,\rherzog^e(Y))\quad\text{when $Y\in\inj(R)$,\rlap{ and}}\\
   \xi^\infty(X,Y) = \lim_{e\to\infty}
   \frac{1}{p^{e\codim(\supp X)}}\xi(\lfrob^e(X),Y) \quad\text{when $X\in\proj(R)$}.
  \end{gather*}
  We immediately note, using~\eqref{chixiformulas} together
  with Lemma~\ref{frobherzog}, that
  \begin{gather*}
    \xi_\infty(X,Y) = \chi_\infty(Y^\dagger,X)\quad \text{whenever
      $Y\in\inj(\YYY)$,\rlap{ and}}
    \\
    \xi^\infty(X,Y) = \chi_\infty(X^*,Y)\quad \text{whenever
      $X\in\proj(\XXX)$.}
  \end{gather*}

\section{Grothendieck spaces} 
In this section we present the definition and basic properties of Grothendieck
spaces.  We will introduce three types of Grothendieck spaces, two of
which were introduced in~\cite{halvorsenF}.
The constructions in \emph{loc.\ cit.}\ are  different
from the ones here but yield the same spaces. 

\subsection{Complement}
For any specialization-closed subset $\XXX$ of $\spec R$, a new subset
is defined by 
  \begin{equation*}
    \comp{\XXX} = \Big\{\, \ppp \in \spec{R}\,\Big\vert\,\XXX \cap \var(\ppp) = \{\mmm\}
              \text{ and } \dim\var(\ppp) \leq \codim \XXX \,\Big\} .
  \end{equation*}  
  This set is engineered to be the largest subset of $\spec R$ such that
  \begin{equation*}
    \XXX \cap \comp{\XXX} = \{\mmm\} \quad\text{\, and \,}\quad
    \dim \XXX + \dim\comp{\XXX} \leq \dim R. 
  \end{equation*}
In fact, when $\XXX$ is closed,
\[
\dim\XXX +\dim\comp\XXX=\dim R.
\]
Note that $\comp\XXX$ is specialization-closed and that $\XXX\subseteq\compcomp\XXX$.

 \subsection{Grothendieck space}
  Let $\XXX$ be a specialization-closed subset of $\spec{R}$. The \emph{Grothendieck
    space} of the category $\proj(\XXX)$ is the $\QQ$--vector space
  $\grot\proj(\XXX)$ presented by elements $[X]_{\proj(\XXX)}$, one for
  each isomorphism class of a complex $X\in\proj(\XXX)$, and relations
  \begin{equation*}
    [X]_{\proj(\XXX)} = [\tilde X]_{\proj(\XXX)} \quad\text{whenever}\quad
    \chi(X, -)=\chi(\tilde X, -)
  \end{equation*}
  as metafunctions (``functions'' from a category to a set)
  $\der(\comp{\XXX})\to \QQ$.

Similarly, the Grothendieck
  space of the category $\inj(\XXX)$ is the $\QQ$--vector space
  $\grot\inj(\XXX)$ presented by elements $[Y]_{\inj(\XXX)}$, one for each
  isomorphism class of a complex $Y\in\inj(\XXX)$, and relations
  \begin{equation*}
   [Y]_{\inj(\XXX)} = [\tilde Y]_{\inj(\XXX)} \quad\text{whenever}\quad
   \xi(-,Y)=\xi(-, \tilde Y)
  \end{equation*}
as metafunctions $\der(\comp{\XXX})\to \QQ$.

Finally, the Grothendieck
  space of the category $\der(\XXX)$ is the $\QQ$--vector space
  $\grot\der(\XXX)$ presented by elements $[Z]_{\der(\XXX)}$, one for each
  isomorphism class of a complex $Z\in\der(\XXX)$, and relations
  \begin{equation*}
   [Z]_{\der(\XXX)} = [\tilde Z]_{\der(\XXX)} \quad\text{whenever}\quad
   \chi(-,Z)=\chi(-, \tilde Z)
  \end{equation*}
as metafunctions $\proj(\comp{\XXX})\to \QQ$. Because of~\eqref{chixiformulas},
  these relations are exactly the same as the relations
  \begin{equation*}
   [Z]_{\der(\XXX)} = [\tilde Z]_{\der(\XXX)} \quad\text{whenever}\quad
   \xi(Z,-)=\xi(\tilde Z,-)
  \end{equation*}
as metafunctions $\inj(\comp{\XXX})\to \QQ$.
  
  By definition of the Grothendieck space $\grot\proj(\XXX)$ there is, for each complex $Z$
  in $\der(\comp\XXX)$, a well-defined $\QQ$--linear map
  \begin{equation*}
   \chi(-,Z)\colon \grot\proj(\XXX)\to\QQ\quad\text{given by}\quad
   [X]_{\proj(\XXX)}\mapsto\chi(X,Z) .
  \end{equation*}
  We equip $\grot\proj(\XXX)$ with the \emph{initial topology} induced
  by the  family of maps in the above form. This topology is the coarsest
  topology on $\grot\proj(\XXX)$ making the above map continuous for
  all $Z$ in $\der(\comp\XXX)$. Likewise, for each complex $Z$ in
  $\der(\comp\XXX)$, there is a well-defined $\QQ$--linear map
  \begin{equation*}
   \xi(Z,-)\colon \grot\inj(\XXX)\to\QQ\quad\text{given by}\quad
   [Y]_{\inj(\XXX)}\mapsto\xi(Z,Y) ,
  \end{equation*}
  and we equip $\grot\inj(\XXX)$ with the initial topology induced by the
  family of maps in the above form. Finally, for each complex $X$ in
  $\proj(\comp\XXX)$, there is a well-defined $\QQ$--linear map
  \begin{equation*}
   \chi(X,-)\colon \grot\der(\XXX)\to\QQ\quad\text{given by}\quad
   [Z]_{\der(\XXX)}\mapsto\chi(X,Z) ,
  \end{equation*}
  and we equip $\grot\der(\XXX)$ with the initial topology induced by the
  family of maps in the above form.
By~\eqref{chixiformulas}, this topology is the same as the initial
topology induced by the family of (well-defined, $\QQ$--linear) maps
in the form
  \begin{equation*}
   \xi(-,Y)\colon \grot\der(\XXX)\to\QQ\quad\text{given by}\quad
   [Z]_{\der(\XXX)}\mapsto\xi(Z,Y) ,
  \end{equation*}
for complexes $Y$ in $\inj(\comp\XXX)$.

  It is straightforward to see that addition
  and scalar multiplication are continuous operations on Grothendieck
  spaces, making $\grot\proj(\XXX)$, $\grot\der(\XXX)$ and $\grot\inj(\XXX)$
  topological $\QQ$--vector spaces. We shall always consider
  Grothendieck spaces as topological $\QQ$--vector spaces, so that, for example, a
  ``homomorphism'' between Grothendieck spaces means a homomorphism of
  topological $\QQ$--vector spaces: that is, a continuous,
  $\QQ$--linear map.

The following proposition is an improved version of~\cite[Proposition~2(iv) and~(v)]{halvorsenF}.

\begin{prop}\label{grothendieckobs}
  Let $\XXX$ be a specialization-closed subset of $\spec R$.
  \begin{enumerate}
  \item \label{intheform} Any element in $\grot\proj(\XXX)$ can be
    written in the form $r[X]_{\proj(\XXX)}$ for some $r\in\QQ$ and
    some $X\in\proj(\XXX)$, any element in $\grot\inj(\XXX)$ can be
    written in the form $s[Y]_{\inj(\XXX)}$ for some $s\in\QQ$ and
    some $Y\in\inj(\XXX)$, and any element in $\grot\der(\XXX)$ can be
    written in the form $t[Z]_{\der(\XXX)}$ for some $t\in\QQ$ and
    some $Z\in\der(\XXX)$. Moreover, $X$, $Y$ and $Z$ may be chosen so that
    \begin{equation*}
      \codim(\supp X)=\codim(\supp Y)=\codim(\supp Z)=\codim\XXX .
    \end{equation*}
  \item \label{generatedg} For any complex $Z\in\der(\XXX)$, we have the
    identity
    \begin{equation*}
      [Z]_{\der(\XXX)} = [\H(Z)]_{\der(\XXX)}.
    \end{equation*}
In particular, the $\QQ$--vector space $\grot\der(\XXX)$ is generated
by elements in the form $[R/\ppp]_{\der(\XXX)}$ for prime ideals $\ppp$ in $\XXX$. 
\end{enumerate}
\end{prop}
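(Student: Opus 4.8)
The plan is to prove the two parts in the order stated, since part \eqref{generatedg} is used to make part \eqref{intheform} concrete but part \eqref{intheform}'s proof-structure for $\grot\der(\XXX)$ feeds into the module case. Let me reorganize: I would actually prove \eqref{generatedg} first, then \eqref{intheform}.

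For \eqref{generatedg}, the identity $[Z]_{\der(\XXX)} = [\H(Z)]_{\der(\XXX)}$ follows from the defining relations of $\grot\der(\XXX)$: I must check that $\chi(X,Z) = \chi(X,\H(Z))$ as metafunctions on $\proj(\comp\XXX)$. For $X \in \proj(\comp\XXX)$ we have $\supp X \cap \supp Z = \{\mmm\}$ by definition of $\comp\XXX$, so $X \ltensor_R Z \in \der(\mmm)$ and $\chi(X,Z)$ is defined; the same holds with $Z$ replaced by any homology module of $Z$ (supports only shrink). The equality $\chi(X \ltensor_R Z) = \sum_i (-1)^i \chi(X \ltensor_R \H_i(Z))$ is the standard additivity of the Euler characteristic along the (finite, bounded) filtration of $Z$ by soft/hard truncations, or equivalently along the spectral sequence / the sequence of short exact sequences $0 \to Z_{\geq i+1} \to Z_{\geq i} \to \shift^i \H_i(Z) \to 0$; tensoring with the semi-projective resolution $P$ of $X$ keeps these exact, and $\chi$ is additive on short exact sequences of finite-length complexes with a shift contributing a sign $(-1)^i$. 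This is routine. Then, since each $\H_i(Z)$ is a finitely generated module with support in $\XXX$, a prime filtration $0 = M_0 \subset M_1 \subset \cdots \subset M_\ell = \H_i(Z)$ with $M_j/M_{j-1} \cong R/\ppp_j$, $\ppp_j \in \supp \H_i(Z) \subseteq \XXX$, together with additivity of $[-]_{\der(\XXX)}$ on short exact sequences (again immediate from additivity of $\chi(X,-)$), gives $[\H_i(Z)]_{\der(\XXX)} = \sum_j [R/\ppp_j]_{\der(\XXX)}$, whence the "in particular" clause.

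For \eqref{intheform}, the point is that every element is a $\QQ$-multiple of a single generator, with control on the codimension. I would treat $\grot\proj(\XXX)$; the $\inj$ and $\der$ cases are formally identical (using $\xi(-,Y)$ additivity, resp.\ part \eqref{generatedg} to reduce to primes, and the dagger/Foxby dictionary). A general element is a finite $\QQ$-combination $\sum_k r_k [X_k]_{\proj(\XXX)}$. Clearing denominators we may assume $r_k = n_k \in \ZZ$; since $[\shift X]_{\proj(\XXX)} = -[X]_{\proj(\XXX)}$ (shift multiplies $\chi$ by $-1$) and $[X \oplus X'] = [X] + [X']$, a $\ZZ$-combination of classes is the class of a single complex (a direct sum of shifts of the $X_k$), so every element has the form $\tfrac{1}{m}[X]_{\proj(\XXX)}$. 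It remains to adjust $X$ so that $\codim(\supp X) = \codim\XXX$. Pick any prime $\ppp \in \XXX$ with $\dim R/\ppp = \dim\XXX$, i.e.\ $\codim \ppp = \codim\XXX$, and let $K$ be the Koszul complex on a system of parameters for $R/\ppp$ lifted to $R$; then $K \in \proj(R)$ with $\H_0(K) = R/\ppp$ up to lower-dimensional pieces, so $\supp K = \var(\ppp) \subseteq \XXX$ and $\codim(\supp K) = \codim\XXX$. Replacing $X$ by $X \oplus \shift X \oplus K$ or similar does not change $[X]$ modulo adding/subtracting $[K]$... — more cleanly: note $[X]_{\proj(\XXX)} = [X]_{\proj(\XXX)} + [K]_{\proj(\XXX)} - [K]_{\proj(\XXX)}$, and $[X \oplus K]_{\proj(\XXX)}$ has a representative of the right codimension while $[K]$ does too, so $\tfrac1m[X] = \tfrac1m[X\oplus K] - \tfrac1m[K]$ expresses the element as a difference of two classes each of codimension exactly $\codim\XXX$; then one more direct-sum-of-shifts merge (using that $X \oplus K$ and $K$ can be combined as $(X \oplus K) \oplus \shift K$, whose support is still $\subseteq \XXX$ of codimension $\codim\XXX$ since $\supp(\shift K) = \supp K$) collapses this back to a single $\tfrac1m[X']_{\proj(\XXX)}$ with $\codim(\supp X') = \codim\XXX$.

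The main obstacle is the last codimension-normalization step: one needs a complex in $\proj(\XXX)$ (resp.\ $\inj(\XXX)$) whose support is exactly of codimension $\codim\XXX$, and then a clean argument that adjoining it does not disturb the class. For $\proj$ the Koszul construction above works; for $\inj(\XXX)$ one applies dagger duality to such a Koszul complex (staying inside $\inj(\XXX)$ by \eqref{derfunc} and since $\supp$ is preserved by $(-)^\dagger$), and for $\der(\XXX)$ one simply uses $[R/\ppp]_{\der(\XXX)}$ from part \eqref{generatedg} directly. The bookkeeping with shifts to re-merge a difference of two classes into one is the only genuinely fiddly point, and it is handled by the observation that $[A]_{\proj(\XXX)} - [B]_{\proj(\XXX)} = [A \oplus \shift B]_{\proj(\XXX)}$ whenever $\supp(A \oplus \shift B) = \supp A \cup \supp B$ sits inside $\XXX$ with the desired codimension — which holds in our situation because both $\supp A$ and $\supp B$ are contained in $\XXX$ and at least one of them (the Koszul piece) already realizes codimension $\codim\XXX$.
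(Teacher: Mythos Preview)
Your overall strategy matches the paper's: for \eqref{intheform}, collapse a $\QQ$-linear combination to a single class via shifts and direct sums, then adjoin a Koszul complex supported on a prime of maximal dimension in $\XXX$ to normalize the codimension; for \eqref{generatedg}, use additivity on short exact sequences to strip off homology and then filter each homology module by primes. The paper carries out \eqref{generatedg} by writing down an explicit short exact sequence of complexes $0\to\shift^n\H_n(Z)\to Z\to Z'\to 0$ rather than checking the defining relation via truncations, but this is cosmetic.

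There is, however, a concrete error in your Koszul step. If $x_1,\dots,x_d$ are lifts to $R$ of a system of parameters for $R/\ppp$ (so $d=\dim R/\ppp=\dim\XXX$), then $K=K(x_1,\dots,x_d)$ has $\H_0(K)=R/(x_1,\dots,x_d)$ and $\supp K=V(x_1,\dots,x_d)$, which is \emph{not} $V(\ppp)$ and in general need not lie in $\XXX$ at all. For a stark example, take $R$ a domain, $\XXX=\spec R$, and $\ppp=(0)$: your $K$ is then the Koszul complex on a full system of parameters for $R$, so $\supp K=\{\mmm\}$, whereas you need a complex whose support has codimension $0$. The fix is exactly what the paper does: take $K$ to be the Koszul complex on a set of \emph{generators} of $\ppp$, so that $\supp K=V(\ppp)\subseteq\XXX$ with $\codim V(\ppp)=\codim\XXX$; then the single merge $[X]_{\proj(\XXX)}=[X\oplus K\oplus\shift K]_{\proj(\XXX)}$ already has a representative of the correct codimension, and your two-step difference-then-remerge maneuver is unnecessary.

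One smaller point: producing the auxiliary complex in $\inj(\XXX)$ via dagger duality presupposes a dualizing complex, which the statement does not assume. The paper glosses over this with ``the same argument applies''; if you want to be careful, note that the relations defining $\grot\inj(\XXX)$ are computed by Euler forms and hence are insensitive to completion, so you may pass to $\widehat R$, where a dualizing complex is available and $K^\dagger\in\inj(\XXX)$ has the required support.
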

\begin{proof}
\eqref{intheform} By construction, any element $\alpha$ in
$\grot\proj(\XXX)$ is a $\QQ$--linear combination
\[
\alpha=r_1[X^1]_{\proj(\XXX)}+\cdots +r_n[X^n]_{\proj(\XXX)}
\]
where $r_i\in\QQ$ and $X^i\in\proj(\XXX)$. Since a shift of a complex changes the sign
of the corresponding element in the Grothendieck space, 
we can assume that $r_i>0$ for all $i$. Choosing a greatest common
denominator for the $r_i$'s, we can find  $r\in\QQ$ such that 
\[
\alpha = r(m_1[X^1]_{\proj(\XXX)}+\cdots +m_n[X^n]_{\proj(\XXX)}) = r[X]_{\proj(\XXX)},
\]
where the $m_i$'s are natural numbers and $X$ is the direct sum over
$i$ of $m_i$ copies of $X^i$.

In order to prove the last statement of~\eqref{intheform}, choose a
prime ideal $\ppp=(a_1,\dots ,a_t)$ in $\XXX$ which is first in a
chain
$\ppp=\ppp_0\subsetneq\ppp_1\subsetneq\cdots\subsetneq\ppp_t=\mmm$ of
prime ideals in $\XXX$ of maximal length $t=\codim\XXX$. Note that $\XXX\supseteq\var(\ppp)$ and
that the Koszul complex $K=K(a_1,\dots ,a_t)$ has support exactly
equal to $\var(\ppp)$. It follows that
\[
\alpha=\alpha+0=r[X]_{\proj(\XXX)}+r[K]_{\proj(\XXX)}-r[K]_{\proj(\XXX)}=r[X\oplus
K\oplus\shift K]_{\proj(\XXX)},
\]
where $\codim(\supp(X\oplus K\oplus\shift K))=\codim\XXX$. The
same argument applies to elements of $\grot\inj(\XXX)$ and $\grot\der(\XXX)$.

\eqref{generatedg} 
Any complex in
$\der(\XXX)$ is
isomorphic to a bounded complex. After an appropriate shift, we may
assume that $Z$ is 
a complex in $\der(\XXX)$ in the form
\[
0\to Z_n\to \cdots \to Z_1\to Z_0 \to 0 
\]
for some natural number $n$.
Since $\H_n(Z)$ is the kernel of the map $Z_n\to Z_{n-1}$, we can construct a
short exact sequence of complexes
\[
0\to \shift^n \H_n(Z) \to Z \to Z' \to 0,
\]
where $Z'$ is a complex in $\der(\XXX)$ concentrated in the same
degrees as $Z$. The complex $Z'$ is exact in degree $n$, and $\H_i(Z')=\H_i(Z)$ for
$i=n-1,\dots ,0$. In the Grothendieck space $\grot\der(\XXX)$, we then
have
\[
[Z]_{\der(\XXX)} = [\shift^n\H_n(Z)]_{\der(\XXX)} + [Z']_{\der(\XXX)} .
\]
Again, 
$Z'$ is isomorphic to a
complex concentrated in degree $n-1,\cdots ,0$, so we can repeat the
process a finite number of times and achieve that 
\begin{align*}
[Z]_{\der(\XXX)} &= [\shift^n\H_n(Z)]_{\der(\XXX)} + \cdots +
[\shift\H_1(Z)]_{\der(\XXX)} + [\H_0(Z)]_{\der(\XXX)} \\
&= [\shift^n\H_n(Z)\oplus \cdots
\oplus\shift\H_1(Z)\oplus\H_0(Z)]_{\der(\XXX)} \\
&= [\H(Z)]_{\der(\XXX)} .
\end{align*}
The above analysis shows that any element of $\grot\der(\XXX)$ can be written
in the form
\[
r[Z]_{\der(\XXX)}=r\sum_i(-1)^i [\H_i(Z)]_{\der(\XXX)},
\]
which means that $\grot\der(\XXX)$ is generated by modules. Taking a
filtration of a module 
establishes that $\grot\der(\XXX)$ must be generated by elements 
of the form $[R/\ppp]_{\der(\XXX)}$ for prime ideals $\ppp$ in $\XXX$. 
\end{proof}

\subsection{Induced Euler characteristic}
  The Euler characteristic $\chi\colon\der(\mmm)\to\QQ$ induces an
  isomorphism\footnote{That is, a $\QQ$--linear homeomorphism.}
  \begin{equation}\label{chiisomorphism}
    \grot\der(\mmm)\overset{\cong}{\longrightarrow}\QQ\quad\text{given by}\quad
    [Z]_{\der(\mmm)}\mapsto\chi(Z).
  \end{equation}
See~\cite{halvorsenF} for more details.
  We also denote this isomorphism by $\chi$. The isomorphism means
  that we can identify the intersection multiplicity $\chi(X,Y)$ and
  the Euler form $\xi(X,Y)$ of complexes $X$ and $Y$ with elements in
  $\grot\der(\mmm)$ of the form
  \begin{equation*}
    [X\ltensor_R Y]_{\der(\mmm)} \quad\text{and}\quad
    [\rhom_R(X,Y)]_{\der(\mmm)} ,
  \end{equation*}
respectively.
\subsection{Induced inclusion}
  Let $\XXX$ be a specialization-closed subset of $\spec R$. It is
  straightforward to verify that the full embeddings of $\proj(\XXX)$ and $\inj(\XXX)$ into $\der(\XXX)$ induce
  homomorphisms\footnote{That is, continuous, $\QQ$--linear maps.}
  \begin{align*}
   \grot\proj(\XXX)\to\grot\der(\XXX) \quad &\text{given by}\quad
   [X]_{\proj(\XXX)}\mapsto [X]_{\der(\XXX)},\text{\rlap{ and}}\\
   \grot\inj(\XXX)\to\grot\der(\XXX) \quad &\text{given by}\quad
   [Y]_{\inj(\XXX)}\mapsto [Y]_{\der(\XXX)}.
  \end{align*}
  If $\XXX$ and $\XXX'$ are specialization-closed subsets of $\spec R$ such that
  that $\XXX\subseteq\XXX'$, then it is straightforward to verify that
  the full embeddings of $\proj(\XXX)$ into
  $\proj(\XXX')$, $\inj(\XXX)$ into $\inj(\XXX')$ and $\der(\XXX)$
  into $\der(\XXX')$  induce  homomorphisms
  \begin{align*}
    \grot\proj(\XXX)\to\grot\proj(\XXX')
    \quad &\text{given by}\quad [X]_{\proj(\XXX)} \mapsto
    [X]_{\proj(\XXX')}, \\
    \grot\inj(\XXX)\to\grot\inj(\XXX')
    \quad &\text{given by}\quad [Y]_{\inj(\XXX)} \mapsto
    [Y]_{\inj(\XXX')}, \text{\rlap{ and}} \\
    \grot\der(\XXX)\to\grot\der(\XXX')
    \quad &\text{given by}\quad [Z]_{\der(\XXX)} \mapsto
    [Z]_{\der(\XXX')}.
  \end{align*}
    The maps obtained
  in this way are called \emph{inclusion homomorphisms}, and we shall
  often denote them by an overline: if $\sigma$ is an element in a Grothendieck space, then
  $\overline\sigma$ denotes the image of $\sigma$ after an application
  of an inclusion homomorphisms.

\subsection{Induced tensor product and Hom}
Proposition~\ref{inducedhomomorphisms} below shows that the
left-derived tensor product functor and the right-derived Hom-functor
induce bi-homomorphisms\footnote{That is, maps that are continuous and
 $\QQ$--linear in each variable.} on Grothendieck spaces.
To clarify the contents of the proposition, let $\XXX$ and $\YYY$ be
specialization-closed subsets of $\spec R$ such that 
$\XXX\cap\YYY=\{\mmm\}$ and $\dim\XXX+\dim\YYY\leq\dim R$.
Proposition~\ref{inducedhomomorphisms} states, for example, that the right-derived
Hom-functor induces a bi-homomorphism
\[
\hom\colon\grot\proj(\XXX)\times\grot\inj(\YYY) \to \grot\inj(\mmm).
\]
Given elements $\sigma\in\grot\proj(\XXX)$ and $\tau\in\grot\inj(\YYY)$, we can, by Proposition~\ref{grothendieckobs}, write
\begin{equation*}
  \sigma=r[X]_{\proj(\XXX)}\quad\text{and}\quad \tau=s[Y]_{\inj(\YYY)},
\end{equation*} 
where $r$ and $s$ are rational numbers, $X$ is a complex in
$\proj(\XXX)$ and $Y$ is a complex in $\inj(\YYY)$. The
bi-homomorphism above is then given by
\begin{equation}\label{indhomex}
(\sigma,\tau)\mapsto \hom(\sigma,\tau) =
rs[\rhom_R(X,Y)]_{\der(\mmm)} .
\end{equation}
We shall use the symbol ``$\otimes$'' to denote any bi-homomorphism on
Grothendieck spaces induced by the left-derived tensor product and
the symbol ``$\hom$'' to denote any bi-homomorphism induced by
right-derived Hom-functor. 
Together with the isomorphism in~\eqref{chiisomorphism} it follows
that the intersection multiplicity $\chi(X,Y)$ and Euler form
$\xi(X,Y)$ can be identified with elements in $\grot\der(\mmm)$ of the
form
\begin{gather*}
[X]_{\proj(\XXX)} \otimes  [Y]_{\der(\YYY)}, \qquad\qquad
[X]_{\der(\XXX)} \otimes [Y]_{\proj(\YYY)},\\
\hom([X]_{\der(\XXX)} ,[Y]_{\inj(\YYY)}) \quad\text{and}\quad 
\hom([X]_{\proj(\XXX)} ,[Y]_{\der(\YYY)}) .
\end{gather*}

\begin{prop}\label{inducedhomomorphisms}
  Let $\XXX$ and $\YYY$ be specialization-closed subsets of $\spec R$ such that
  $\XXX\cap\YYY=\{\mmm\}$ and $\dim\XXX+\dim\YYY\leq\dim R$. The
  left-derived tensor product induces bi-homomorphisms as in the first
  column below, and the right-derived Hom-functor induces
  bi-homomorphisms as in the second column below.
\begin{alignat*}{2}
\grot\proj(\XXX)\times \grot\der(\YYY) &\to \grot\der(\mmm), & \grot\proj(\XXX)\times \grot\der(\YYY) &\to \grot\der(\mmm),\\
\grot\der(\XXX)\times \grot\proj(\YYY) &\to \grot\der(\mmm), & \grot\der(\XXX)\times \grot\inj(\YYY) &\to \grot\der(\mmm),\\
\grot\proj(\XXX)\times \grot\proj(\YYY)&\to \grot\proj(\mmm), & \grot\proj(\XXX)\times \grot\inj(\YYY) &\to \grot\inj(\mmm), \\
\grot\proj(\XXX)\times \grot\inj(\YYY) &\to \grot\inj(\mmm), & \grot\proj(\XXX)\times \grot\proj(\YYY) &\to \grot\proj(\mmm),\\
\grot\inj(\XXX)\times \grot\proj(\YYY) &\to \grot\inj(\mmm)\quad\text{and}\quad & \grot\inj(\XXX)\times \grot\inj(\YYY) &\to \grot\proj(\mmm).
\end{alignat*}
\end{prop}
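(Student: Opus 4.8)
The plan is to treat the ten assignments uniformly. For each, four points must be checked: that the derived functor carries the displayed pair of source categories into the displayed target category; that the object-level assignment --- $([X],[Y])\mapsto[X\ltensor_R Y]$ in the tensor column, $([X],[Y])\mapsto[\rhom_R(X,Y)]$ in the Hom column --- respects the defining relations of each source Grothendieck space in each variable; that it is additive and $\QQ$--homogeneous in each variable; and that it is continuous in each variable. The first point is precisely the list~\eqref{derfunc}, read off with $\XXX\cap\YYY=\{\mmm\}$. Additivity and $\QQ$--homogeneity are routine, since $\ltensor_R$ and $\rhom_R$ distribute over finite direct sums, a shift of an argument negates the corresponding class, and homogeneity then follows from Proposition~\ref{grothendieckobs}. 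The content is therefore well-definedness and continuity, which I would establish together.

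Everything is driven by two containments of specialization-closed sets: since $\YYY$ satisfies $\XXX\cap\YYY=\{\mmm\}$ and $\dim\XXX+\dim\YYY\leq\dim R$, and $\comp\XXX$ is by construction the largest set with these two properties relative to $\XXX$, we have $\YYY\subseteq\comp\XXX$; symmetrically $\XXX\subseteq\comp\YYY$; and $\XXX\cap\comp\XXX=\{\mmm\}$. Now fix one variable, say the first, together with an element $[Y]$ of the other source space and a test object $W$ of the type defining the characterizing functionals and relations of the target ($W\in\proj(R)=\proj(\comp{\{\mmm\}})$ for the targets $\grot\der(\mmm)$ and $\grot\proj(\mmm)$, and $W\in\der(R)$ for $\grot\inj(\mmm)$). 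Pairing the prospective image class with $W$ produces a rational number expressed through $\chi$ or $\xi$ of an iterated tensor/Hom expression in $X$, $Y$, $W$; using the functorial isomorphisms of Section~\ref{iso} --- commutativity and associativity in the tensor cases, and additionally (Adjoint), (Swap), (Tensor-eval) and (Hom-eval) in the Hom cases, where one repeatedly invokes $\rhom_R(X,-)\simeq X^{*}\ltensor_R-$ and $\rhom_R(X^{*},-)\simeq X\ltensor_R-$ for $X\in\proj(R)$ (both being instances of the invertibility of the evaluation morphisms) --- one rewrites this number as $\chi$ or $\xi$ of $X$ paired with a rearranged test object $W'$ built from $W$, $Y$ and duals thereof, possibly, in the Hom cases, a finite $\QQ$--linear combination of such. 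Repeated use of~\eqref{derfunc} shows that $W'$ lies in $\der(\comp\XXX)$, the complement category attached to $\grot\proj(\XXX)$, thanks to $\YYY\subseteq\comp\XXX$ and $\XXX\cap\comp\XXX=\{\mmm\}$. Hence, as a functional of $[X]$, the pairing is --- up to sign and fixed scalars --- a finite combination of the maps $\chi(-,Z)$ defining the topology and the relations on $\grot\proj(\XXX)$; it therefore kills the defining relations and is continuous. Since $\grot\der(\mmm)\cong\QQ$ via~\eqref{chiisomorphism}, and $\grot\proj(\mmm)$ and $\grot\inj(\mmm)$ carry the corresponding initial topologies, this gives well-definedness and continuity in the first variable; the second variable is symmetric, using $\XXX\subseteq\comp\YYY$.

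The real obstacle is not the (voluminous but mechanical) shuffling of functorial isomorphisms but a mismatch of presentations: $\grot\proj$ is presented through $\chi(X,-)$ while $\grot\inj$ is presented through $\xi(-,Y)$, so in the Hom cases the rewriting above naturally produces a $\xi$ where a $\chi$ is wanted, or conversely --- for instance the first variable of $\hom\colon\grot\proj(\XXX)\times\grot\der(\YYY)\to\grot\der(\mmm)$ produces $\chi(\rhom_R(X,Y))=\xi(X,Y)$, which must be recognized as determined by $\chi(X,-)$ on $\der(\comp\XXX)$. When $R$ admits a dualizing complex this is immediate from~\eqref{chixiformulas}, since $\xi(X,Z)=\chi(X,Z^{\dagger})$ and $(-)^{\dagger}$ permutes $\der(\comp\XXX)$ --- this is exactly the equivalence of relations noted just after the definition of $\grot\der(\XXX)$. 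For general $R$ one reduces to this case by base change along $R\to\widehat R$, since the completion admits a dualizing complex and passing to it affects none of the finite lengths, supports of finite complexes, or Euler characteristics involved; alternatively, combining Proposition~\ref{grothendieckobs} with base change along $R\to R/\ppp$ reduces the claim to the identity $\chi_{S}(\rhom_{S}(G,S))=(-1)^{\dim S}\chi_{S}(G)$ for a bounded complex $G$ of finite free modules with finite-length homology over a local ring $S$. Either way, this reconciliation of $\chi$ and $\xi$ on the complement categories is the one step that needs an actual idea rather than bookkeeping.
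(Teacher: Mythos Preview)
Your main line is correct and is exactly what the paper does: it treats one representative case (the map $\hom\colon\grot\proj(\XXX)\times\grot\inj(\YYY)\to\grot\inj(\mmm)$), reduces to $R$ complete so that a dualizing complex exists, and then uses the functorial isomorphisms together with~\eqref{chixiformulas} to rewrite the test functional $\xi(Z,\rhom_R(X,Y))$ as $\chi(X,Z\ltensor_R Y^{\dagger})$ with $Z\ltensor_R Y^{\dagger}\in\der(\comp\XXX)$, exactly as you describe. Your identification of the $\chi$/$\xi$ reconciliation as the one substantive step is spot on, and your more systematic packaging of all ten cases is a mild improvement over the paper's ``easy exercise for the reader''.

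However, your proffered \emph{alternative} is wrong. The identity $\chi_{S}(\rhom_{S}(G,S))=(-1)^{\dim S}\chi_{S}(G)$ for $G\in\proj(\mmm_{S})$ is \emph{not} true over a general local ring $S$: by Proposition~\ref{selfdualityforvanishing} (proved later in the paper) it is precisely the statement that $[G]_{\proj(\mmm_{S})}$ is self-dual, equivalently that it satisfies vanishing, and the Dutta--Hochster--McLaughlin example shows this fails. So your base-change reduction does not land on a provable identity; it lands on a reformulation of the vanishing conjecture. The only valid route through the $\chi$/$\xi$ mismatch is the one via completion and~\eqref{chixiformulas}, which is why the paper states those formulas and the equivalence of the two presentations of $\grot\der(\XXX)$ \emph{before} this proposition. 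Drop the alternative and your argument is complete.
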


\begin{proof}
  We verify that the map
  \[
    \hom\colon \grot\proj(\XXX)\times \grot\inj(\YYY)\to
    \grot\inj(\mmm)
  \]
  given as in~\eqref{indhomex} is a well-defined bi-homomorphism, leaving the
  same verifications for the remaining maps as an easy exercise for
  the reader.
  
  Therefore, assume that $X$ and $\tilde X$ are complexes from
  $\proj(\XXX)$ and that $Y$ and $\tilde Y$ are complexes from
  $\inj(\YYY)$ such that
  \begin{equation*}
   \sigma = [X]_{\proj(\XXX)}=[\tilde X]_{\proj(\XXX)}
   \quad\text{and}\quad \tau = [Y]_{\inj(\YYY)}=[\tilde Y]_{\inj(\YYY)} .
  \end{equation*}
In order to show that the map is a well-defined
  $\QQ$--bi-linear map, we are required to demonstrate that
  \begin{equation*}
    [\rhom_R(X,Y)]_{\inj(\mmm)} = [\rhom_R(\tilde X,\tilde
    Y)]_{\inj(\mmm)}.
  \end{equation*}
  To this end, let $Z$ be an arbitrary complex in
  $\der(\comp{\{\mmm\}})=\der(R)$. We want to show that
\[
\xi(Z,\rhom_R(X,Y))=\xi(Z,\rhom_R(\tilde X,\tilde Y)).
\]
Without loss of generality, we may assume that $R$
is complete; in particular, we may assume that $R$ admits a normalized dualizing
complex. Observe that 
  \begin{equation*}
    Z\otimes_R X\in\der(\XXX)\subseteq\der(\comp\YYY)\quad\text{and}\quad 
    Z\ltensor_R Y^\dagger\in\der(\YYY)\subseteq \der(\comp{\XXX}).
  \end{equation*}
  Applying \eqref{chixiformulas}, (Hom-eval) and (Assoc), we learn
  that
  \begin{align}\label{xichi}
\begin{split}
   \xi(Z,\rhom_R(X,Y)) & = \chi(Z,\rhom_R(X,Y)^\dagger) \\
    &=  \chi(Z,X\ltensor_R Y^\dagger) \\
    &= \chi(X,Z \ltensor_R Y^\dagger).
\end{split}
  \end{align}
  A similar computation shows that $\xi(Z,\rhom_R(\tilde
  X,Y))=\chi(\tilde X,Z \ltensor_R Y^\dagger)$, and since
  $[X]_{\proj(\XXX)}=[\tilde X]_{\proj(\XXX)}$, we conclude
  that
  \[
    \xi(Z,\rhom_R(X,Y)) = \xi(Z,\rhom_R(\tilde X,Y)).
  \]
  An application of (Adjoint) yields that
  \[
   \xi(Z,\rhom_R(\tilde X,Y))  = \xi(Z\ltensor_R \tilde X,Y),
  \]
  and similarly $\xi(Z,\rhom_R(\tilde X,\tilde Y)) = \xi(Z\ltensor_R
  \tilde X,\tilde Y)$. Since $[Y]_{\inj(\YYY)}=[\tilde Y]_{\inj(\YYY)}$,
  we conclude that
  \[
   \xi(Z,\rhom_R(\tilde X,Y)) = \xi(Z,\rhom_R(\tilde X,\tilde Y)).
  \]
  Thus, we have that
  \[
    \xi(Z,\rhom_R(X,Y))= \xi(Z,\rhom_R(\tilde X,\tilde Y)),
  \]
  which establishes well-definedness.
  
  By definition, the induced Hom-map is $\QQ$--linear. To establish
  that it is continuous in, say, the first variable it suffices for
  fixed $\tau\in\grot\inj(\YYY)$ to show
  that, to every $\epsilon > 0$ and every complex $Z\in\der(\comp{\{\mmm\}})=\der(R)$, there
  exists a $\delta > 0$ and a complex $Z'\in\der(\comp\XXX)$ such that
  \begin{equation*}
    |\chi(\sigma,Z')|<\delta \implies
    |\xi(Z,\hom(\sigma,\tau))|<\epsilon .
  \end{equation*}
We can write $\tau=r[Y]_{\inj(\YYY)}$ for an
$Y\in\inj(\YYY)$ and a rational number $r>0$. According
to~\eqref{xichi}, the implication above is then achieved with $Z' =
  Z\ltensor_R Y^\dagger$ and $\delta=\epsilon/r$. Continuity in the
  second variable is shown by similar arguments.
\end{proof}

In Proposition~\ref{inducedisos} below, we will show that the dagger,
  Foxby and star functors from diagram~\eqref{dualitydiagram} induce
  isomorphisms of Grothendieck spaces. We shall denote 
  the isomorphisms induced by
  the star and dagger duality functors by the same symbol as the
  original functor, whereas the isomorphisms induced by the Foxby
  functors will be denoted according to
  Proposition~\ref{inducedhomomorphisms} by $D\otimes -$ and
  $\hom(D,-)$. In this way, for example,
\[
[X]^\dagger_{\proj(\XXX)}=[X^\dagger]_{\inj(\XXX)},\quad
[X]^*_{\proj(\XXX)}=[X^*]_{\proj(\XXX)}\quad\text{and}\quad D\otimes
    [X]_{\proj(\XXX)} = [D\ltensor_R X]_{\inj(\XXX)}.
\]

\begin{prop}\label{inducedisos}
  Let $\XXX$ be a specialization-closed subset of $\spec R$, and assume that $R$
  admits a dualizing complex. The functors from diagram~\eqref{dualitydiagram}
induce isomorphisms of Grothendieck spaces as described by the
horizontal and circular arrows in the following commutative diagram. 
\begin{equation*}
\xymatrix@R+25pt@C=-12pt{
& {\grot\der(\XXX)} \ar@<.6ex>[rr]^-{(-)^\dagger} & {\hspace{150pt}} & {\grot\der(\XXX)}
\ar@<.6ex>[ll]^-{(-)^\dagger} \\
{\phantom{|}} \ar@(dl,ul)[]^{(-)^*}  & {\grot\proj(\XXX)}  
\ar[u]  
\ar@<.6ex>[rr]^-{(-)^\dagger}
\ar@/^20pt/^-{D\ltensor_R -}[rr] & & {\,\grot\inj(\XXX)} \ar[u]
\ar@<.6ex>[ll]^-{(-)^\dagger} \ar@/^20pt/[ll]^-{\rhom_R(D,-)} & {\phantom{|}} \ar@(dr,ur)[]_{(-)^\star}
}
\end{equation*}
\end{prop}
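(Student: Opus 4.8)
The plan is to handle the six functors appearing in diagram~\eqref{dualitydiagram} uniformly. For each such $F$ (the two daggers, the two stars, and the two Foxby functors) I will: (a) show that the rule $[X]\mapsto[F(X)]$ respects the defining relations of the source Grothendieck space and hence extends $\QQ$-linearly to a well-defined map into the target; (b) show this map is a homeomorphism; and (c) deduce commutativity of the diagram from the natural isomorphisms between composites recorded in \S\ref{star}. Two standing tools are used throughout: Proposition~\ref{grothendieckobs}\eqref{intheform}, which lets me work with single-complex representatives, and the identities \eqref{chixiformulas}, which convert between $\chi$ and $\xi$. Note that dagger, Foxby and $(-)^\star$ are all available since $R$ admits a dualizing complex, and that by \S\ref{dagger}, \S\ref{foxby} and \S\ref{star} each functor already preserves supports, so it does send the relevant $\XXX$-subcategory to the correct $\XXX$-subcategory.

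Well-definedness is the substance of the argument, and I illustrate it on $(-)^\dagger\colon\grot\proj(\XXX)\to\grot\inj(\XXX)$. If $[X]_{\proj(\XXX)}=[\tilde X]_{\proj(\XXX)}$, that is, $\chi(X,-)=\chi(\tilde X,-)$ as metafunctions on $\der(\comp\XXX)$, then for $Z\in\der(\comp\XXX)$ the first identity of \eqref{chixiformulas} (applied with $X^\dagger\in\inj(R)$), the isomorphism $X^{\dagger\dagger}\simeq X$, and commutativity of $\ltensor_R$ give $\xi(Z,X^\dagger)=\chi(Z,X^{\dagger\dagger})=\chi(Z,X)=\chi(X,Z)$, and likewise for $\tilde X$; hence $\xi(-,X^\dagger)=\xi(-,\tilde X^\dagger)$ on $\der(\comp\XXX)$, which is exactly the relation $[X^\dagger]_{\inj(\XXX)}=[\tilde X^\dagger]_{\inj(\XXX)}$. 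Since $(-)^\dagger$ carries direct sums to direct sums and shifts to shifts with the correct sign, the rule $[X]_{\proj(\XXX)}\mapsto[X^\dagger]_{\inj(\XXX)}$ extends to a well-defined $\QQ$-linear map. The reverse dagger is handled by the same computation with the roles of $\proj$ and $\inj$ exchanged. For $(-)^*$ on $\grot\proj(\XXX)$ and $(-)^\star$ on $\grot\inj(\XXX)$ one runs the analogous computation using the second and third identities of \eqref{chixiformulas} respectively, together with the fact that dagger preserves support, so that $Z\in\der(\comp\XXX)$ implies $Z^\dagger\in\der(\comp\XXX)$. For $(-)^\dagger$ on $\grot\der(\XXX)$ one uses that its defining relations may be tested either by $\chi(-,Z)$ against $\proj(\comp\XXX)$ or by $\xi(Z,-)$ against $\inj(\comp\XXX)$, and the second and third identities of \eqref{chixiformulas} (together with star duality on $\comp\XXX$) transport one presentation to the other. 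Finally, the Foxby maps $D\ltensor_R-$ and $\rhom_R(D,-)$ require no separate argument: by \S\ref{star} they are the composites $(-)^{*\dagger}$ and $(-)^{\star\dagger}$ of functors already shown to descend.

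For the homeomorphism property, recall that each Grothendieck space carries the initial topology for its family of maps $\chi(-,Z)$, $\xi(Z,-)$, and so on, so a map into such a space is continuous as soon as all its composites with these defining maps are. But the computation above identifies, for instance, $\xi(Z,-)\circ(-)^\dagger$ on $\grot\proj(\XXX)$ with the map $\chi(-,Z)$ itself (up to the symmetry of $\chi$), which is one of the maps defining the topology of $\grot\proj(\XXX)$; the same is true for the remaining functors. Hence each induced map is continuous, and since every functor in \eqref{dualitydiagram} is a duality or an equivalence whose quasi-inverse is again among these functors, the induced maps are mutually inverse and therefore homeomorphisms. Commutativity is then immediate: naturally isomorphic functors induce the same map on Grothendieck spaces because $[F(X)]=[G(X)]$ whenever $F(X)\simeq G(X)$, and the required isomorphisms $(-)^{*\dagger}\simeq(-)^{\dagger\star}\simeq D\ltensor_R-$, $(-)^{\star\dagger}\simeq(-)^{\dagger*}\simeq\rhom_R(D,-)$, $(-)^{\dagger\dagger}\simeq\id$, $(-)^{**}\simeq\id$ and $(-)^{\star\star}\simeq\id$ are all recorded in \S\ref{star}; the squares involving the vertical inclusion maps commute trivially, since $(-)^\dagger$ on $\der(\XXX)$ literally restricts to $(-)^\dagger\colon\proj(\XXX)\to\inj(\XXX)$.

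The only delicate point is the bookkeeping in the well-definedness step: one must track carefully which identity in \eqref{chixiformulas} is being applied, the symmetry of the intersection multiplicity, and---for $\grot\der(\XXX)$---the two interchangeable presentations of its defining relations, so that each relation is carried to exactly the relation defining the target space. Everything after that is formal.
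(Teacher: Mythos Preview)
Your proof is correct and follows essentially the same approach as the paper. The paper's own proof is very brief: it appeals to Proposition~\ref{inducedhomomorphisms} for well-definedness and continuity, and to \S\ref{dagger}, \S\ref{foxby}, \S\ref{star} for invertibility. You instead verify well-definedness and continuity directly from the identities~\eqref{chixiformulas}, which is precisely the computation underlying Proposition~\ref{inducedhomomorphisms}; so the content is the same, only spelled out in more detail. Your observation that the Foxby maps descend for free as composites $(-)^{*\dagger}$ and $(-)^{\star\dagger}$ is a clean way to avoid a separate check.
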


\begin{proof}
The fact that the dagger, star and Foxby functors induce homomorphisms
on Grothendieck spaces follows immediately from
Proposition~\ref{inducedhomomorphisms}. The fact that the induced
homomorphisms are isomorphisms follows immediately from~\ref{dagger},
\ref{foxby} and~\ref{star}, since the underlying functors define
dualities or equivalences of categories.
\end{proof}

\begin{prop} \label{formulas}
  Let $\XXX$ be a specialization-closed subset of $\spec R$ and consider the
  following elements of Grothendieck spaces.
  \begin{equation*}
    \alpha\in\grot\proj(\XXX), \quad
    \beta\in\grot\inj(\XXX), \quad
    \gamma\in\grot\der(\comp{\XXX}) \quad\text{and}\quad
    \sigma\in\grot\der(\mmm).
  \end{equation*} 
  Then $\sigma^\dagger=\sigma$ holds in $\grot\der(\mmm)$, and so do
  the following identities.
  \begin{align*}
    \alpha\otimes\gamma=\hom(\gamma,\alpha^\dagger)&=\hom(\alpha,\gamma^\dagger)=\hom(\alpha^*,\gamma)\\
    \hom(\alpha,\gamma)=\alpha\otimes\gamma^\dagger&=\hom(\gamma,D\otimes\alpha)=\alpha^*\otimes\gamma\\
    \hom(\gamma,\beta)=\beta^\dagger\otimes\gamma&=\hom(\hom(D,\beta),\gamma) \\
    \hom(\beta^\dagger,\gamma)=\hom(\gamma^\dagger,\beta)&=\hom(D,\beta)\otimes\gamma=\hom(\gamma,\beta^\star)
  \end{align*}
\end{prop}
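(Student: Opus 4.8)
The plan is to reduce every assertion to an identity of Euler characteristics of explicit complexes, which is then settled by the three formulas of~\eqref{chixiformulas} together with the functorial isomorphisms. Throughout I assume $R$ admits a \emph{normalized} dualizing complex $D$: the statement only makes sense once a dualizing complex exists, and normalization just absorbs a shift — a shift that, as the identity $\sigma^\dagger=\sigma$ already shows, has to be absorbed. Recall from~\eqref{chiisomorphism} that $\chi$ is an isomorphism $\grot\der(\mmm)\to\QQ$ sending $[k]_{\der(\mmm)}$ to $1$; hence every element of $\grot\der(\mmm)$ is a rational multiple of $[k]_{\der(\mmm)}$, and two such elements coincide exactly when $\chi$ does not distinguish them.

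I would dispatch $\sigma^\dagger=\sigma$ first. Since $D$ is normalized, $k^\dagger=\rhom_R(k,D)\simeq k$, so the automorphism of $\grot\der(\mmm)$ induced by dagger duality (Proposition~\ref{inducedisos} with $\XXX=\{\mmm\}$) fixes $[k]_{\der(\mmm)}$ and is therefore the identity. For the eleven remaining equalities, the first move is to apply Proposition~\ref{grothendieckobs} to write $\alpha=r[X]_{\proj(\XXX)}$, $\beta=s[Y]_{\inj(\XXX)}$ and $\gamma=t[Z]_{\der(\comp\XXX)}$ with $r,s,t\in\QQ$, $X\in\proj(\XXX)$, $Y\in\inj(\XXX)$ and $Z\in\der(\comp\XXX)$. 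By construction $\XXX\cap\comp\XXX=\{\mmm\}$ and $\dim\XXX+\dim\comp\XXX\leq\dim R$, so the pair $\{\XXX,\comp\XXX\}$ — in either order — satisfies the hypotheses of Proposition~\ref{inducedhomomorphisms}; thus every bi-homomorphism occurring in the statement is one of those provided there. Moreover $(-)^\dagger$, $(-)^*$ and $(-)^\star$ respect the support conditions and the Foxby functors interchange $\proj(\XXX)$ and $\inj(\XXX)$, so each twisted class ($\alpha^\dagger$, $\alpha^*$, $\gamma^\dagger$, $\beta^\dagger$, $\beta^\star$, $D\otimes\alpha$, $\hom(D,\beta)$) again lies in the expected Grothendieck space.

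Unwinding the definitions of these induced maps, each of the eleven expressions turns into a rational multiple of a class $[W]_{\der(\mmm)}$, where $W$ is an explicit complex assembled from $X$, $Y$, $Z$ and $D$ via $\ltensor_R$ and $\rhom_R$; applying $\chi$ converts each displayed equality into an equality of integers. For instance, after cancelling the common factor $rt$, the first row becomes $\chi(X,Z)=\xi(Z,X^\dagger)=\xi(X,Z^\dagger)=\xi(X^*,Z)$, with $\xi(A,B)$ read as $\chi(\rhom_R(A,B))$. I would verify these scalar identities using: the three formulas of~\eqref{chixiformulas}; commutativity of $\ltensor_R$; the fact that $(-)^\dagger$, $(-)^*$, $(-)^\star$ are dualities, so $(-)^{\dagger\dagger}\simeq\id$ and likewise; and the factorizations from~\ref{star}, namely $D\ltensor_R-\simeq(-)^{*\dagger}$ and $\rhom_R(D,-)\simeq(-)^{\dagger*}$ (whence $Y^\star\simeq D\ltensor_R Y^\dagger$). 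Thus e.g.\ $\xi(Z,X^\dagger)=\chi(Z,X^{\dagger\dagger})=\chi(X,Z)$ and $\xi(X^*,Z)=\chi(X^*,Z^\dagger)=\chi(X,Z^{\dagger\dagger})=\chi(X,Z)$; the Foxby-twisted terms in rows 2--4 are brought to this form by first rewriting, say, $D\ltensor_R X\simeq(X^*)^\dagger$ or $\rhom_R(D,Y)\simeq(Y^\dagger)^*$ and then applying~\eqref{chixiformulas}.

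I do not anticipate a genuine obstacle: once the passage to Euler characteristics is made, each identity is a one- or two-line consequence of~\eqref{chixiformulas}. The part that needs care is purely bookkeeping — matching each of the eleven induced bi-homomorphisms to the correct line of the table in Proposition~\ref{inducedhomomorphisms} (so that $W$ is the right complex), and, for the terms built with $D\otimes-$, $\hom(D,-)$ or $(-)^\star$, choosing the rewriting from~\ref{star} that makes one of Mori's three formulas apply directly rather than after further manipulation.
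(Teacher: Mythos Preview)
Your proposal is correct and follows essentially the same route as the paper: reduce every identity to~\eqref{chixiformulas}, using the factorizations of the Foxby functors from~\ref{star} to rewrite the $D\otimes-$, $\hom(D,-)$ and $(-)^\star$ terms. The only variation is your treatment of $\sigma^\dagger=\sigma$---the paper obtains it by setting $X=R$ in the first formula of~\eqref{chixiformulas} (so that $\chi(Y)=\xi(R,Y)=\chi(R,Y^\dagger)=\chi(Y^\dagger)$), whereas you use $k^\dagger\simeq k$ together with the isomorphism~\eqref{chiisomorphism}; both arguments are short and valid.
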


\begin{proof}
Recall from~\ref{star} that the Foxby functors can be written as the composition of
a star and a dagger functor. All identities follow from the formulas
in~\eqref{chixiformulas}. The formula for $\sigma$ is a consequence
of the first formula in~\eqref{chixiformulas} in the case $X=R$.
\end{proof}

\subsection{Frobenius endomorphism}
  Assume that $R$ is complete of prime characteristic $p$ and with
  perfect residue field. Let $\XXX$ be a specialization-closed subset of $\spec{R}$,
  and let $n$ be a non-negative integer. The derived Frobenius endofunctor
  $\lfrob^n$ on $\proj(\XXX)$
  induces an endomorphism\footnote{That is, a continuous, $\QQ$--linear
  operator.} on $\grot\proj(\XXX)$,
  which will be denoted $\frob^n_\XXX$; see~\cite{halvorsenF} for
  further details. It is given for a complex $X\in\proj(\XXX)$ by
\[
\frob^n_\XXX([X]_{\proj(\XXX)})=[\lfrob^n(X)]_{\proj(\XXX)}.
\]
Let 
  \begin{equation*}
    \Phi^n_{\XXX} = \frac{1}{p^{n\,\codim\XXX}}\frob^n_\XXX
                       \colon \grot\proj(\XXX) \to \grot\proj(\XXX).
  \end{equation*}
According to~\cite[Theorem~19]{halvorsenF}, the endomorphism
$\Phi^n_\XXX$ is diagonalizable.

 In   Lemma~\ref{frobherzog}, we established that the functor
  $\rherzog^n$ is an endofunctor on $\inj(\XXX)$ which can be written as
\[
\rherzog^n(-)=(-)^\dagger\circ\lfrob^n \circ (-)^\dagger .
\]
Thus, $\rherzog^n$ is composed of functors that induce homomorphisms on
Grothendieck spaces, and hence it too
  induces a homomorphism $\grot\inj(\XXX)\to \grot\inj(\XXX)$. We denote
  this endomorphism on $\grot\inj(\XXX)$ by $\herzog_\XXX^n$. It is
  given for a complex $Y\in\inj(\XXX)$ by 
\[ 
\herzog^n_\XXX([Y]_{\inj(\XXX)})=[\rherzog^n(Y)]_{\inj(\XXX)}.
\]
Let 
  \begin{equation*}
    \Psi_\XXX^n = \frac{1}{p^{n\codim\XXX}}\herzog_\XXX^n
                     \colon \grot\inj(\XXX) \to \grot\inj(\XXX).
  \end{equation*}
  Theorem~\ref{injmaintheo} shows that
  $\Psi_\XXX^n$ also is a
  diagonalizable automorphism.

For complexes $X\in\proj(\XXX)$ and $Y\in\inj(\XXX)$ we shall write
$\Phi_\XXX^n(X)$ and $\Psi_\XXX^n(Y)$ instead of 
$\Phi_\XXX^n([X]_{\proj(\XXX)})$ and $\Psi_\XXX^n([Y]_{\inj(\XXX)})$,
respectively. The isomorphism in~\eqref{chiisomorphism} together with
Proposition~\ref{inducedhomomorphisms} shows that the Dutta
multiplicity $\chi_\infty(X,Y)$ and its two analogs $\xi_\infty(X,Y)$
and $\xi^\infty(X,Y)$ from Section~\ref{duttamultiplicity} can be
identified with elements in $\grot\der(\mmm)$ of the form
\begin{gather*}
\lim_{e\to\infty}(\Phi^e_\XXX(X)\otimes [Y]_{\der(\YYY)}), \quad
\lim_{e\to\infty}\hom([X]_{\der(\XXX)},\Psi^e_\YYY(Y))\text{\rlap{
    and}}\\
\lim_{e\to\infty}\hom(\Phi^e_\XXX(X),[Y]_{\der(\YYY)}) .
\end{gather*}

\section{Vanishing}

\subsection{Vanishing}\label{projvan}
  Let $\XXX$ be a specialization-closed subset of $\spec R$ and consider an element $\alpha$ in
  $\grot\proj(\XXX)$, an element $\beta$ in $\grot\inj(\XXX)$ and an
  element $\gamma$ in $\grot\der(\XXX)$. The \emph{dimensions} of
  $\alpha$, $\beta$ and $\gamma$ are defined as 
  \begin{align*}
   \dim\alpha &= \inf\strut \Big\{\dim(\supp X)\,\Big\vert\, \alpha=r[X]_{\proj(\XXX)} \text{ for some
   $r\in\QQ$ and $X\in\proj(\XXX)$}\Big\},\\
   \dim\beta &= \inf\strut \Big\{\dim(\supp Y)\,\Big\vert\, \alpha=s[Y]_{\proj(\XXX)} \text{ for some
   $s\in\QQ$ and $Y\in\inj(\XXX)$}\Big\}\text{\rlap{ and}}\\
   \dim\gamma  &= \inf \strut \Big\{\dim(\supp Z)\,\Big\vert\, \gamma=t[Z]_{\der(\XXX)} \text{ for some
   $t\in\QQ$ and $Z\in\der(\XXX)$}\Big\}.
  \end{align*}
  In particular, the dimension of an element in a Grothendieck space
  is $-\infty$ if and only if the element is trivial.
  We say that $\alpha$ \emph{satisfies vanishing} if
  \begin{equation*}
   \alpha\otimes\sigma=0 \quad\text{in $\grot\der(\mmm)$ for all $\sigma\in\grot\der(\comp{\XXX})$ with $\dim\sigma<\codim\XXX$},
  \end{equation*}
  and that $\alpha$ \emph{satisfies weak vanishing} if
  \begin{equation*}
   \overline{\alpha\otimes\tau}=0 \quad\text{in $\grot\der(\mmm)$ for all $\tau\in\grot\proj(\comp{\XXX})$ with $\dim\tau<\codim\XXX$}.
  \end{equation*}
Similarly, we say that $\beta$ \emph{satisfies vanishing} if 
  \begin{equation*}
   \hom(\sigma,\beta)=0 \quad\text{in $\grot\der(\mmm)$ for all $\sigma\in\grot\der(\comp{\XXX})$ with $\dim\sigma<\codim\XXX$},
  \end{equation*}
  and that $\beta$ \emph{satisfies weak vanishing} if
  \begin{equation*}
   \overline{\hom(\tau,\beta)}=0 \quad\text{in $\grot\der(\mmm)$ for all $\tau\in\grot\proj(\comp{\XXX})$ with $\dim\tau<\codim\XXX$}.
  \end{equation*}
  The \emph{vanishing dimension} of $\alpha$ and $\beta$ is defined as the numbers
  \begin{align*}
    \vandim\alpha &=\inf\strut\Big\{u\in\ZZ \,\Big\vert\,
    \begin{array}{l}\text{$\alpha\otimes \sigma=0$ for all
      $\sigma\in\grot\der(\comp\XXX)$}\\ \text{with $\dim\sigma < \codim\XXX-u$}\end{array}
    \Big\} \text{\rlap{ and}}\\
\vandim\beta &=\inf\strut\Big\{v\in\ZZ \,\Big\vert\,
    \begin{array}{l}\text{$\hom(\sigma,\beta)=0$ for all
      $\sigma\in\grot\der(\comp\XXX)$}\\ \text{with $\dim\sigma < \codim\XXX-v$}\end{array}
    \Big\} .
  \end{align*}
  In particular, the vanishing dimension of an element in a
  Grothendieck space is $-\infty$ if and only if the element is
  trivial, and  the vanishing dimension is less than or equal to $0$ if
  and only if the element satisfies vanishing.

\begin{rema}\label{vanexam}
If $X$ is a complex in $\proj(R)$ with $\XXX=\supp X$, then the
element $\alpha=[X]_{\proj(\XXX)}$ in $\grot\proj(\XXX)$ satisfies vanishing
  exactly when
  \begin{equation*}
    \chi(X,Y)=0 \quad\text{for all complexes $Y\in\der(\comp\XXX)$
                             with $\dim(\supp Y) < \codim\XXX$}, 
  \end{equation*}
  and $\alpha$ satisfies weak vanishing exactly when
  \begin{equation*}
    \chi(X,Y)=0 \quad\text{for all complexes $Y\in\proj(\comp\XXX)$
                             with $\dim(\supp Y)<\codim\XXX$}.
  \end{equation*}
  The vanishing dimension of $\alpha$ measures the extent to which
  vanishing fails to hold: the vanishing dimension of $\alpha$ is the
  infimum of integers $u$ such that 
  \begin{equation*}
    \chi(X,Y)=0 \quad\text{for all complexes $Y\in\der(\comp\XXX)$
                             with $\dim(\supp Y) < \codim\XXX-u$}. 
  \end{equation*}
It follows that the ring $R$ satisfies vanishing (or weak vanishing,
  respectively) as defined in~\ref{intmult}, if and only if all elements of $\grot\proj(\XXX)$ for all specialization-closed
  subsets $\XXX$ of $\spec R$ satisfy vanishing (or weak vanishing, respectively).

If $Y$ is a complex in $\inj(R)$ with $\XXX=\supp Y$, then the element
  $\beta = [Y]_{\inj(\XXX)}$ in $\grot\inj(\XXX)$
  satisfies vanishing exactly when
  \begin{equation*}
    \xi(X,Y)=0 \quad\text{for all complexes $X\in\der(\comp\XXX)$
                             with $\dim(\supp X) < \codim\XXX$}.
  \end{equation*}
  and $\beta$ satisfies weak vanishing exactly when
  \begin{equation*}
    \xi(X,Y)=0 \quad\text{for all complexes $X\in\proj(\comp\XXX)$
                             with $\dim(\supp X)<\codim\XXX$}.
  \end{equation*}
  The vanishing dimension of $\beta$ measures the extent to
  which vanishing of the Euler form fails to hold: the vanishing dimension
  of $\beta$ is the infimum of integers $v$ such that
  \begin{equation*}
    \xi(X,Y)=0 \quad\text{for all complexes $X\in\der(\comp\XXX)$
                              with $\dim(\supp X) < \codim\XXX - v$}. 
  \end{equation*}
  Because of the formulas in~\eqref{chixiformulas}, it follows that
  the ring $R$ satisfies vanishing (or weak vanishing, respectively)
  if and only all elements of $\grot\inj(\XXX)$ for all
  specialization-closed subsets $\XXX$ of $\spec R$ satisfy
  vanishing (or weak vanishing, respectively).  
\end{rema}

\begin{rema}\label{dimformulas}
For a specialization closed subset
$\XXX$ of $\spec R$ and elements $\alpha\in\grot\proj(\XXX)$,
$\beta\in\grot\inj(\XXX)$ and $\gamma\in\grot\der(\XXX)$,
we have the following formulas for dimension.
\begin{align*}
\dim\gamma &=\dim\gamma^\dagger, \\
\dim\alpha=\dim\alpha^\dagger &=\dim\alpha^*=\dim(D\otimes\alpha)
\text{\rlap{ and}}\\
\dim\beta=\dim\beta^\dagger &=\dim\beta^\star=\dim\hom(D,\beta).
 \end{align*}
These follow immediately from the fact that the dagger, star and Foxby
functors do not change supports of complexes. Further, we have the
following formulas for vanishing dimension. 
\begin{align*}
\vandim\alpha=\vandim\alpha^\dagger &=\vandim\alpha^* =\vandim(D\otimes\alpha)\rlap{\text{ and}}\\
\vandim\beta=\vandim\beta^\dagger &=\vandim\beta^\star=\vandim\hom(D,\beta) .
\end{align*}
These follow immediately from the above together with~\eqref{chixiformulas}.
\end{rema}

\begin{prop}\label{vanconditions}
  Let $\XXX$ be a specialization-closed subset of $\spec R$, let
  $\alpha\in\grot\proj(\XXX)$ and let $\beta\in\grot\inj(\XXX)$. Then the
  following hold.
  \begin{enumerate}
    \item \label{A1} If $\codim\XXX\leq 2$ then vanishing holds for all elements
      in $\grot\proj(\XXX)$ and $\grot\inj(\XXX)$. In particular, we always have
      \begin{equation*}
        \vandim\alpha, \vandim\beta \leq\max(0,\codim\XXX-2).
      \end{equation*}
    \item \label{A2} Let $\XXX'$ be a specialization-closed subset of $\spec R$
      with $\XXX\subseteq \XXX'$. Then
      \begin{equation*}
        \vandim\overline\alpha \leq \vandim\alpha
        -(\codim\XXX-\codim\XXX')\text{\rlap{ and}}
      \end{equation*}
for $\overline\alpha\in\grot\proj(\XXX')$. For any given $s$ in the
range $0\leq s\leq \vandim\alpha$, we can always find 
an $\XXX'$ with $s=\codim\XXX-\codim\XXX'$ such that the above
inequality becomes an equality. Likewise,
\[
        \vandim\overline\beta \leq \vandim\beta
        -(\codim\XXX-\codim\XXX')
\]
for $\overline\beta\in\grot\inj(\XXX')$, and for any given $s$ in the
range $0\leq s\leq \vandim\beta$, we can always find an $\XXX'$ with
$s=\codim\XXX-\codim\XXX'$ such that the above inequality becomes an
equality. 
\item \label{A3}The element $\alpha$ satisfies weak
      vanishing if and only if, for all specialization-closed subsets
      $\XXX'$ with $\XXX\subseteq\XXX'$ and $\codim\XXX'=\codim\XXX-1$,
      \begin{equation*}
        \overline\alpha = 0 \quad\text{as an element of}\quad \grot\der(\XXX').
      \end{equation*} 
Similarly, the element $\beta$ satisfies weak vanishing if and only
if, for all specialization-closed subsets $\XXX'$ with
$\XXX\subseteq\XXX'$ and $\codim\XXX'=\codim\XXX-1$, 
      \begin{equation*}
        \overline\beta = 0 \quad\text{as an element of}\quad \grot\der(\XXX').
      \end{equation*} 
  \end{enumerate}
\end{prop}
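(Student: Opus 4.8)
\emph{Proof plan.} The three parts are best deduced in the order \emph{(ii)}, \emph{(i)}, \emph{(iii)} from the following geometric lemma: if $W$ is a closed subset of $\spec R$ with $\mmm\in W$ and $j$ is an integer with $\dim W\leq\dim R-j$ and $0\leq j\leq\dim R$, then there is a prime $\qqq$ of coheight $j$ with $\var(\qqq)\cap W=\{\mmm\}$; consequently, if $\XXX$ is specialization-closed with $\XXX\cap W=\{\mmm\}$ and $\dim\XXX\leq j$, then $\XXX'=\XXX\cup\var(\qqq)$ is specialization-closed, contains $\XXX$, has $\codim\XXX'=\dim R-j$, and every finite complex with support inside $W$ belongs to $\der(\comp{\XXX'})$. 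The lemma is proved by prime avoidance: pick $x_{1},\dots,x_{\dim R-j}\in\mmm$ so that each $x_{i}$ avoids the minimal primes of $(x_{1},\dots,x_{i-1})$ and the minimal primes other than $\mmm$ of $I+(x_{1},\dots,x_{i-1})$, where $\var(I)=W$; then $(x_{1},\dots,x_{\dim R-j})$ is part of a system of parameters and hence has a minimal prime $\qqq$ of coheight $j$, while $I+(x_{1},\dots,x_{\dim R-j})$ is $\mmm$-primary because $\dim W\leq\dim R-j$, so $\var(\qqq)\cap W=\{\mmm\}$; the displayed consequence then follows since $\comp{\XXX'}$ contains $\var(\ppp)$ for every $\ppp\in W$.

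\emph{Part (ii).} The inequality is formal. For $\XXX\subseteq\XXX'$ one has $\comp{\XXX'}\subseteq\comp\XXX$ and $\dim\comp{\XXX'}\leq\codim\XXX'$; moreover the induced maps are compatible, so that $\overline\alpha\otimes\sigma'=\alpha\otimes\overline{\sigma'}$ and $\hom(\sigma',\overline\beta)=\hom(\overline{\sigma'},\beta)$ whenever $\sigma'\in\grot\der(\comp{\XXX'})$ has image $\overline{\sigma'}\in\grot\der(\comp\XXX)$, and $\dim\overline{\sigma'}\leq\dim\sigma'$. Feeding this into the definition of vanishing dimension gives $\vandim\overline\alpha\leq\vandim\alpha-(\codim\XXX-\codim\XXX')$ and the same for $\beta$. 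For the equality statement, set $u=\vandim\alpha$ and fix $0\leq s\leq u$; since $u$ is an infimum there is a complex $Z\in\der(\comp\XXX)$ with $\dim(\supp Z)<\codim\XXX-u+1$ and, writing $\alpha=r[X]_{\proj(\XXX)}$, with $\chi(X,Z)\neq 0$. As $\dim(\supp Z)\leq\codim\XXX-u\leq\dim R-(\dim\XXX+s)$ and $\dim\XXX\leq\dim\XXX+s$, the geometric lemma (with $W=\supp Z$ and $j=\dim\XXX+s$) yields an $\XXX'\supseteq\XXX$ with $\codim\XXX'=\codim\XXX-s$ and $Z\in\der(\comp{\XXX'})$; then $[Z]_{\der(\comp{\XXX'})}$ witnesses $\vandim\overline\alpha\geq u-s$, so the inequality becomes an equality. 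The argument for $\beta$ is identical with $\xi(-,Y)$ replacing $\chi(X,-)$.

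\emph{Part (i).} If $\codim\XXX\leq 2$ and $\sigma\in\grot\der(\comp\XXX)$ has $\dim\sigma<\codim\XXX$, then $\dim\sigma\leq 1$; writing $\alpha=r[X]_{\proj(\XXX)}$, $\sigma=t[Z]_{\der(\comp\XXX)}$, passing to homology and filtering reduces $\alpha\otimes\sigma=rt\,\chi(X,Z)$ to intersection multiplicities $\chi(X,R/\ppp)$ with $\ppp\in\comp\XXX$ and $\dim R/\ppp\leq 1$. Each of these vanishes: for $\ppp=\mmm$ because the alternating sum of the ranks in a finite free resolution of $X$ is zero (as $\supp X\subsetneq\spec R$ when $\codim\XXX\geq 1$, the case $\codim\XXX=0$ being vacuous), and for $\dim R/\ppp=1$ by the classical vanishing of the intersection multiplicity when one factor has support of dimension at most one (see \cite{serre}), the inequality $\dim(\supp X)+\dim(R/\ppp)<\dim R$ being automatic. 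Hence every $\alpha\in\grot\proj(\XXX)$ satisfies vanishing when $\codim\XXX\leq 2$; for $\beta\in\grot\inj(\XXX)$ one may assume, as in the proof of Proposition~\ref{inducedhomomorphisms}, that $R$ is complete, and then Proposition~\ref{formulas} gives $\hom(\sigma,\beta)=\beta^{\dagger}\otimes\sigma$ with $\beta^{\dagger}\in\grot\proj(\XXX)$ of the same codimension, so the previous case applies. Finally, for an arbitrary $\XXX$ with $\codim\XXX\geq 3$: if $\vandim\alpha=u\geq\codim\XXX-1$ then $s=\codim\XXX-2$ lies in $[0,u]$, and the equality part of \emph{(ii)} produces an $\XXX'$ with $\codim\XXX'=2$ and $\vandim\overline\alpha=u-(\codim\XXX-2)\geq 1$, contradicting the first part of this paragraph; thus $\vandim\alpha\leq\codim\XXX-2=\max(0,\codim\XXX-2)$, and symmetrically for $\beta$.

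\emph{Part (iii).} For the forward implication, let $\XXX'\supseteq\XXX$ with $\codim\XXX'=\codim\XXX-1$ and let $W'\in\proj(\comp{\XXX'})$; since $\comp{\XXX'}\subseteq\comp\XXX$ and $\dim(\supp W')\leq\dim\comp{\XXX'}\leq\codim\XXX'<\codim\XXX$, the class $[W']_{\proj(\comp\XXX)}$ is an admissible test element for weak vanishing of $\alpha$, so $\chi(X,W')=0$; as $W'$ ranges over $\proj(\comp{\XXX'})$ this says exactly that $\overline\alpha=0$ in $\grot\der(\XXX')$. For the converse, let $W\in\proj(\comp\XXX)$ with $\dim(\supp W)<\codim\XXX$, hence $\dim(\supp W)\leq\codim\XXX-1$; the geometric lemma applied to the closed set $\supp W$ with $j=\dim\XXX+1$ produces an $\XXX'\supseteq\XXX$ with $\codim\XXX'=\codim\XXX-1$ and $W\in\proj(\comp{\XXX'})$, so that $\overline\alpha=0$ in $\grot\der(\XXX')$ forces $\chi(X,W)=0$; as $W$ was arbitrary, $\alpha$ satisfies weak vanishing (the case $\codim\XXX=0$ being vacuous on both sides). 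The assertions for $\beta$ follow by the same arguments with $\xi$ and $\hom$, or by applying the above to $\beta^{\dagger}$ after completing $R$. The principal obstacle is the geometric lemma and the attendant dimension--codimension bookkeeping required to transport a witnessing complex from $\comp\XXX$ into the smaller complement $\comp{\XXX'}$; in \emph{(i)}, the sole genuinely non-formal input is the classical vanishing of $\chi$ against one-dimensional supports.
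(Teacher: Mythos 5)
Your proof is correct, but it takes a genuinely different route from the paper's. The paper disposes of the $\beta$--statements by the dagger/Foxby reduction (Proposition~\ref{formulas} and Remark~\ref{dimformulas}) and then simply cites \cite[Example~6 and Remark~7]{halvorsenF} for the $\alpha$--statements; you instead reconstruct the whole argument. Your central device --- the prime-avoidance lemma producing, for a closed set $W$ with $\XXX\cap W=\{\mmm\}$ and $\dim W\leq\dim R-j$, a specialization-closed $\XXX'\supseteq\XXX$ of codimension $\dim R-j$ whose complement $\comp{\XXX'}$ still contains $W$ --- is exactly the bookkeeping needed to transport test complexes between $\comp{\XXX}$ and $\comp{\XXX'}$, and it is what makes the equality statement in \emph{(ii)}, the bootstrap in \emph{(i)} for $\codim\XXX\geq 3$, and the converse direction of \emph{(iii)} work; this is presumably also the content of \cite[Remark~7]{halvorsenF}. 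The reduction of \emph{(i)} to the classes $[R/\ppp]$ with $\dim R/\ppp\leq 1$ via Proposition~\ref{grothendieckobs}, and the rank argument for $\ppp=\mmm$, are fine, and you handle the edge cases $\codim\XXX\in\{0,1\}$ and $\alpha=0$ correctly. The one point to repair is the attribution of the remaining non-formal input: the vanishing of $\chi(X,R/\ppp)$ for $X\in\proj(R)$ with $\codim(\supp X)\geq 2$ and $\dim R/\ppp=1$ is \emph{not} in \cite{serre} (Serre's vanishing theorem concerns regular rings); over an arbitrary noetherian local ring it is Foxby's MacRae-invariant theorem, and in the present context the intended reference is \cite[Example~6]{halvorsenF}. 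The fact you invoke is true, and you rightly flag it as the sole genuinely non-formal ingredient, but the citation should be corrected. What your approach buys is a proof independent of the companion preprint; what it costs is length and the dimension--codimension bookkeeping that the paper outsources.
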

\begin{proof}
Because of Proposition~\ref{formulas} and the formulas in Remark~\ref{dimformulas}, it suffices to
consider the statements for $\alpha$ and $\grot\proj(\XXX)$. But the
in this case, \eqref{A1} and \eqref{A2} are already contained
in~\cite[Example~6 and Remark~7]{halvorsenF}, and \eqref{A3} follows by
considerations similar to those proving ($ii$) in~\cite[Remark~7]{halvorsenF}. 
\end{proof}

The following two propositions present conditions that are equivalent to having a
certain vanishing dimension for elements of the Grothendieck space $\grot\inj(\XXX)$. 
There are similar results for elements of the Grothendieck space
$\grot\proj(\XXX)$; see~\cite[Proposition~23 and~24]{halvorsenF}.

\begin{prop}\label{injvandim0}
  Let $\XXX$ be a specialization-closed subset of $\spec R$, and let $\beta \in
  \grot\inj(\XXX)$. Then the following conditions are equivalent.
  \begin{enumerate}
    \item \label{B1} $\vandim\beta \leq 0$.
    \item \label{B2} $\hom(\gamma,\beta) = 0$ for all
      $\gamma\in\grot\der(\comp{\XXX})$ with $\dim\gamma <\codim\XXX$.
    \item \label{B3} $\overline\beta=0$ in $\grot\inj(\XXX')$ for any specialization-closed
    subset $\XXX'$ of $\spec R$ with $\XXX\subseteq\XXX'$ and
    $\codim\XXX' <\codim\XXX$.
    \item \label{B4} $\overline\beta=0$ in $\grot\inj(\XXX')$ for any specialization-closed subset
    $\XXX'$ of $\spec R$ with $\XXX\subseteq\XXX'$ and $\codim\XXX' =
    \codim\XXX -1$.
  \end{enumerate}
\end{prop}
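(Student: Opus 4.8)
The plan is to reduce the proposition to its analogue for $\grot\proj(\XXX)$ — that is, to \cite[Proposition~23]{halvorsenF} — by transporting the four conditions through dagger duality. By the standard reduction to the complete case we may assume that $R$ admits a normalized dualizing complex $D$; then, by Proposition~\ref{inducedisos}, dagger duality $(-)^\dagger=\rhom_R(-,D)$ induces an isomorphism of topological $\QQ$--vector spaces $(-)^\dagger\colon\grot\inj(\XXX)\toeq\grot\proj(\XXX)$, and since the functor $\rhom_R(-,D)$ does not depend on $\XXX$ this isomorphism commutes with all inclusion homomorphisms $\grot\inj(\XXX)\to\grot\inj(\XXX')$. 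Set $\alpha=\beta^\dagger\in\grot\proj(\XXX)$.

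First I would translate the four conditions into conditions on $\alpha$. Condition \eqref{B1} is $\vandim\alpha\leq 0$ by Remark~\ref{dimformulas}. For \eqref{B2}, Proposition~\ref{formulas} gives $\hom(\gamma,\beta)=\alpha\otimes\gamma$ for every $\gamma\in\grot\der(\comp\XXX)$, so \eqref{B2} says precisely that $\alpha\otimes\gamma=0$ whenever $\dim\gamma<\codim\XXX$. For \eqref{B3} and \eqref{B4}, the commuting square relating $(-)^\dagger$ to the inclusion homomorphisms turns the assertion ``$\overline\beta=0$ in $\grot\inj(\XXX')$'' into ``$\overline\alpha=0$ in $\grot\proj(\XXX')$''. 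Hence \eqref{B1}--\eqref{B4} for $\beta$ are exactly the four equivalent conditions of \cite[Proposition~23]{halvorsenF} for $\alpha$, and the proposition follows.

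Should one prefer a self-contained argument (or should the cited result not be phrased with $\grot\proj(\XXX')$ in its items (iii) and (iv)), the substance is as follows. The equivalence \eqref{B1}$\Leftrightarrow$\eqref{B2} is immediate from the definition of $\vandim\beta$, since the set of admissible $v$ there is closed upward; and \eqref{B3}$\Rightarrow$\eqref{B4} is trivial. For \eqref{B2}$\Leftrightarrow$\eqref{B3}, write $\beta=s[Y]_{\inj(\XXX)}$ and observe, using \eqref{chiisomorphism} and Proposition~\ref{inducedhomomorphisms}, that \eqref{B2} amounts to ``$\xi(Z,Y)=0$ for every $Z\in\der(\comp\XXX)$ with $\dim\supp Z<\codim\XXX$'', while ``$\overline\beta=0$ in $\grot\inj(\XXX')$'' amounts to ``$\xi(W,Y)=0$ for every $W\in\der(\comp{\XXX'})$''. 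Now exploit that $\comp{(-)}$ reverses inclusions, that $\dim\comp\YYY\leq\codim\YYY$ with equality when $\YYY$ is closed, and that $\YYY\subseteq\compcomp\YYY$: if $\XXX\subseteq\XXX'$ with $\codim\XXX'<\codim\XXX$ then $\comp{\XXX'}\subseteq\comp\XXX$ and any $W\in\der(\comp{\XXX'})$ has $\dim\supp W\leq\codim\XXX'<\codim\XXX$, which gives \eqref{B2}$\Rightarrow$\eqref{B3}; conversely, for $Z\in\der(\comp\XXX)$ with $\dim\supp Z<\codim\XXX$ the set $\XXX'=\comp{(\supp Z)}$ satisfies $\XXX\subseteq\XXX'$, $\codim\XXX'=\dim\supp Z<\codim\XXX$ and $\supp Z\subseteq\compcomp{(\supp Z)}=\comp{\XXX'}$, which gives \eqref{B3}$\Rightarrow$\eqref{B2}. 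Finally, for \eqref{B4}$\Rightarrow$\eqref{B3} one must, given any $\XXX''\supseteq\XXX$ with $\codim\XXX''<\codim\XXX$, squeeze in a specialization-closed $\XXX'$ with $\XXX\subseteq\XXX'\subseteq\XXX''$ and $\codim\XXX'=\codim\XXX-1$, so that $\grot\inj(\XXX)\to\grot\inj(\XXX'')$ factors through $\grot\inj(\XXX')$; the choice $\XXX'=\{\ppp\in\XXX''\mid\dim R/\ppp\leq\dim\XXX+1\}$ works once one knows that every prime $\qqq$ with $\dim R/\qqq\geq m$ lies in some prime $\ppp$ with $\dim R/\ppp=m$ (choose $\ppp\supseteq\qqq$ minimizing $\dim R/\ppp$ among those with $\dim R/\ppp\geq m$, and step one rung up a maximal chain out of $\ppp$ to force equality). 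This last combinatorial point — which must be made to hold even when $R$ is not catenary — is the only real obstacle; everything else is routine bookkeeping with the complement operation and the Euler-form identifications recalled above.
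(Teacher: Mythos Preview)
Your proposal is correct. The paper's own argument is terser and routes through previously established machinery: \eqref{B1}$\Leftrightarrow$\eqref{B2} by definition, \eqref{B1}$\Rightarrow$\eqref{B3} via Proposition~\ref{grothendieckobs} and Remark~\ref{vanexam}, \eqref{B3}$\Rightarrow$\eqref{B4} trivially, and then \eqref{B4}$\Rightarrow$\eqref{B2} by invoking the equality clause of Proposition~\ref{vanconditions}\eqref{A2}, which (when $\vandim\beta>0$) manufactures an $\XXX'$ with $\codim\XXX'=\codim\XXX-1$ and $\overline\beta\neq 0$. Since Proposition~\ref{vanconditions} is itself proved by the dagger-duality reduction to \cite{halvorsenF}, your first approach is the paper's strategy pushed one level up; the ``standard reduction to the complete case'' you invoke is exactly the sort of move the paper makes elsewhere without further comment. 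Your second, self-contained approach is also correct and slightly more direct: you close the loop as \eqref{B4}$\Rightarrow$\eqref{B3} by explicitly squeezing in an $\XXX'$ of the right codimension between $\XXX$ and $\XXX''$, rather than appealing to the existence statement buried in Proposition~\ref{vanconditions}. Your worry about non-catenary rings is unfounded: in a chain $\qqq=\ppp_0\subsetneq\cdots\subsetneq\ppp_n=\mmm$ of \emph{maximal length} $n=\dim R/\qqq$, the inequalities $\dim R/\ppp_i\geq\dim R/\ppp_{i+1}+1$ together with the endpoint values force $\dim R/\ppp_i=n-i$ for every $i$, so every intermediate dimension is realized by some $\ppp_i\supseteq\qqq$.
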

\begin{proof}
  By definition \eqref{B1} is equivalent to \eqref{B2}, and
  Proposition~\ref{grothendieckobs} in conjunction with
  Remark~\ref{vanexam} shows that \eqref{B1} implies \eqref{B3}. Clearly
  \eqref{B3} is stronger than \eqref{B4}, and \eqref{B4} in conjunction with
  Proposition~\ref{vanconditions} implies \eqref{B2}.
\end{proof}

\begin{prop}\label{injvandimv}
  Let $\XXX$ be a specialization-closed subset of $\spec R$, let $\beta \in
  \grot\inj(\XXX)$, and let $u$ be a non-negative integer. Then the
  following conditions are equivalent.
  \begin{enumerate}
  \item $\vandim\beta \leq v$.
  \item $\hom(\gamma,\beta) = 0$ for all
    $\gamma\in\grot\der(\comp{\XXX})$ with $\dim\gamma <\codim\XXX-v$.
  \item $\overline\beta=0$ in of $\grot\inj(\XXX')$ for any specialization-closed
    subset $\XXX'$ of $\spec R$ with $\XXX\subseteq\XXX'$ and
    $\codim\XXX' <\codim\XXX-u$.
  \item $\overline\beta=0$ in $\grot\inj(\XXX')$ for any specialization-closed
    subset $\XXX'$ of $\spec R$ with $\XXX\subseteq\XXX'$ and
    $\codim\XXX' = \codim\XXX -v-1$.
  \end{enumerate} 
\end{prop}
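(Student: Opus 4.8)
The plan is to reproduce, one degree higher, the proof of Proposition~\ref{injvandim0} (which is the case $v=0$); throughout I write $v$ for the non-negative integer called $u$ in condition~(3). The equivalence of~(1) and~(2) is simply the definition of $\vandim\beta$, and (3)$\Rightarrow$(4) is immediate since $\codim\XXX-v-1<\codim\XXX-v$. So the content lies in (1)$\Rightarrow$(3) and (4)$\Rightarrow$(1).

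For (1)$\Rightarrow$(3) I would argue directly. Use Proposition~\ref{grothendieckobs} to write $\beta=s[Y]_{\inj(\XXX)}$ with $s\in\QQ$ and $Y\in\inj(\XXX)$; we may assume $s\neq0$, as otherwise $\beta=0$. Fix a specialization-closed $\XXX'\supseteq\XXX$ with $\codim\XXX'<\codim\XXX-v$. From the definition of the complement, $\XXX\subseteq\XXX'$ forces $\comp{\XXX'}\subseteq\comp\XXX$, so every complex $X\in\der(\comp{\XXX'})$ also lies in $\der(\comp\XXX)$ and satisfies $\dim(\supp X)\leq\dim\comp{\XXX'}\leq\codim\XXX'<\codim\XXX-v$. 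By~(2) this gives $\hom([X]_{\der(\comp\XXX)},\beta)=0$, that is $s\,\xi(X,Y)=0$, hence $\xi(X,Y)=0$. As $X$ ranges over $\der(\comp{\XXX'})$ this says $\xi(-,Y)$ vanishes there, so $\overline\beta=s[Y]_{\inj(\XXX')}=0$ in $\grot\inj(\XXX')$, which is~(3). (Equivalently, Proposition~\ref{vanconditions}\eqref{A2} yields $\vandim\overline\beta\leq v-(\codim\XXX-\codim\XXX')\leq-1$, and an element of negative vanishing dimension is trivial.)

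The substantive step, (4)$\Rightarrow$(1), I would prove by contraposition, using the sharp part of Proposition~\ref{vanconditions}\eqref{A2}. Suppose $\vandim\beta\geq v+1$. Then $v+1$ lies in the range $0\leq s\leq\vandim\beta$, so~\eqref{A2} produces a specialization-closed $\XXX'\supseteq\XXX$ with $\codim\XXX-\codim\XXX'=v+1$ for which equality holds, namely $\vandim\overline\beta=\vandim\beta-(v+1)\geq0$; in particular $\overline\beta\neq0$ in $\grot\inj(\XXX')$. Since $\codim\XXX'=\codim\XXX-v-1$, this $\XXX'$ contradicts~(4), so $\vandim\beta\leq v$. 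The one thing that needs care is the degenerate range, where $\codim\XXX-v-1<0$ and conditions~(3) and~(4) are vacuous. There Proposition~\ref{vanconditions}\eqref{A1} settles matters: it gives $\vandim\beta\leq\max(0,\codim\XXX-2)$ with no hypothesis, and $\max(0,\codim\XXX-2)\leq v$ whenever $v\geq\codim\XXX$, so~(1) and hence~(2) hold automatically in that range; moreover, in the contraposition above the same bound forces $\codim\XXX\geq v+3$, so the set $\XXX'$ supplied by~\eqref{A2} genuinely exists. I expect this bookkeeping with~\eqref{A1} and~\eqref{A2}---not any single hard estimate---to be the main obstacle; the rest is formal.
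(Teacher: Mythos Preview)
Your proof is correct and follows essentially the same route as the paper, which simply states that ``the structure of the proof is similar to that of Proposition~\ref{injvandim0}.'' Your explicit use of the sharp part of Proposition~\ref{vanconditions}\eqref{A2} in the contrapositive for (4)$\Rightarrow$(1), and your handling of the degenerate range via~\eqref{A1}, are exactly the kind of bookkeeping the paper suppresses; no genuinely different idea is involved.
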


\begin{proof}
  The structure of the proof is similar to that of
  Proposition~\eqref{injvandim0}.
\end{proof}

\section{Grothendieck spaces in prime characteristic}
According to~\cite[Theorem~19]{halvorsenF} the endomorphism
$\Phi_\XXX$ on $\grot\proj(\XXX)$ is diagonalizable; the precise
statement is recalled in the next theorem. This section establishes
that the endomorphism $\Psi_\XXX$ on $\grot\inj(\XXX)$ is also
diagonalizable; the precise statement is Theorem~\ref{injmaintheo} below.

\begin{theo}\label{diagonalizingthefrobenius}
Assume that $R$ is complete of prime characteristic $p$ and with perfect
  residue field, and let $\XXX$ be a specialization-closed subset of $\spec R$. If
  $\alpha$ is an element in $\grot\proj(\XXX)$ and $u$ is a non-negative
  integer with $u \geq \vandim\alpha$, then
  \begin{equation*}
    (p^u\Phi_\XXX - \id)\circ \cdots \circ 
    (p\Phi_\XXX - \id) \circ (\Phi_\XXX - \id)(\alpha) =0 ,
  \end{equation*}
  and there exists a unique decomposition
  \begin{equation*}
    \alpha = \alpha^{(0)} + \cdots + \alpha^{(u)} 
  \end{equation*}
  in which each $\alpha^{(i)}$ is either zero or an eigenvector for
  $\Phi_\XXX$ with eigenvalue $p^{-i}$.  The elements
$\alpha^{(i)}$ can be computed according to the formula 
  \begin{equation*}
    \begin{pmatrix} \alpha^{(0)} \\ \vdots \\ \alpha^{(u)} \end{pmatrix} =
    \begin{pmatrix} 1 & 1   & \cdots & 1 \\ 
                    1 & p^{-1} & \cdots & p^{-u} \\
                    \vdots & \vdots & \ddots & \vdots \\
                    1 & p^{-u} & \cdots & p^{-u^2}\end{pmatrix}^{\!\!\!-1} 
    \!\!\!\begin{pmatrix} \alpha \\ \Phi_\XXX(\alpha) \\ \vdots \\
      \Phi_\XXX^u(\alpha) \end{pmatrix} ,
  \end{equation*}
and may also be recursively obtained as
\[
\alpha^{(0)}=\lim_{e\to\infty}\Phi_\XXX^e(\alpha) \quad\text{and}\quad \alpha^{(i)}=
\lim_{e\to\infty} p^{ie}\Phi^e_\XXX(\alpha-(\alpha^{(0)}+\cdots
+\alpha^{(i-1)})) .
\]
\end{theo}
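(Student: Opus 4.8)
The plan is to reduce the whole statement to a question about ordinary intersection numbers, feed in the polynomiality of Frobenius multiplicities, and extract the decomposition by linear algebra over $\QQ$. First I would record that an element of $\grot\proj(\XXX)$ is detected by the maps $\chi(-,Z)\colon\grot\proj(\XXX)\to\QQ$ for $Z\in\der(\comp\XXX)$, and that by Proposition~\ref{grothendieckobs}\eqref{generatedg} it suffices to test against $Z=R/\ppp$ for the prime ideals $\ppp\in\comp\XXX$ --- for which $\dim R/\ppp\leq\codim\XXX$. Writing $\alpha=r[X]_{\proj(\XXX)}$ with $r\in\QQ$ and $X\in\proj(\XXX)$ (Proposition~\ref{grothendieckobs}\eqref{intheform}) and representing $X$ by a bounded complex of finitely generated free modules, the definition of $\Phi_\XXX$ gives, for every such $M=R/\ppp$,
\[
\chi(\Phi_\XXX^e(\alpha),M)=\frac{r}{p^{e\codim\XXX}}\,\chi(\lfrob^e(X),M) .
\]
Since $\supp\lfrob^e(X)=\supp X$, the element $\Phi_\XXX^e(\alpha)$ has the same vanishing dimension as $\alpha$; so everything is governed by the sequence $e\mapsto\chi(\lfrob^e(X),M)$.

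The arithmetic input I would invoke is the polynomiality of Frobenius intersection numbers for perfect complexes (Seibert, Roberts), most conveniently packaged through the local Riemann--Roch formula together with the fact that the Frobenius functor multiplies a cycle of dimension $d$ by $p^{d}$: for $X$ as above and $M$ a finitely generated module with $\supp X\cap\supp M=\{\mmm\}$, the function $e\mapsto\chi(\lfrob^e(X),M)$ agrees, for all $e\geq0$, with a polynomial in $p^{e}$ of degree at most $\dim M$. Substituting into the previous display and re-indexing the exponents by $i=\codim\XXX-(\text{degree})$ gives
\[
\chi(\Phi_\XXX^e(\alpha),M)=\sum_{i=0}^{\codim\XXX}c_i(M)\,(p^{-i})^{e}\qquad\text{for all }e\geq0 ,
\]
with $c_i(M)\in\QQ$ and $c_i(M)=0$ whenever $\dim M<\codim\XXX-i$. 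The point where the hypothesis $u\geq\vandim\alpha$ enters --- and the step I expect to be the real obstacle --- is the \emph{uniform} vanishing $c_i(M)=0$ for all $i>u$ and all $\ppp\in\comp\XXX$. Through the Riemann--Roch description this amounts to the vanishing of the components $\mathrm{ch}_j(X)$ of the local Chern character of $X$ for $j<\codim\XXX-u$, which in turn is exactly a reformulation of $\vandim\alpha\leq u$ (test $\chi(X,-)$ against modules of dimension $0,1,2,\dots$ in turn, peeling off one Chern component at each step). Alternatively one can try to avoid Chern classes by inducting on $\codim\XXX$, using that $\lfrob^e$ is compatible with the inclusion homomorphisms $\grot\proj(\XXX)\to\grot\proj(\XXX')$ and intertwines $\Phi_\XXX$ with $p^{-(\codim\XXX-\codim\XXX')}\Phi_{\XXX'}$, together with Proposition~\ref{vanconditions}\eqref{A2} and the $\grot\proj$-analogue of Proposition~\ref{injvandim0}; but some rigidity input of Dutta-multiplicity type seems unavoidable at the top dimension.

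Granting the uniform vanishing, the rest is bookkeeping. Put $Q(t)=\prod_{i=0}^{u}(p^{i}t-1)$, so $Q(p^{-i})=0$ for $0\leq i\leq u$ while $Q(p^{-i})\neq0$ for $i>u$. Pairing $Q(\Phi_\XXX)(\alpha)$ against $M=R/\ppp$ and using the expansion gives $\chi(Q(\Phi_\XXX)(\alpha),M)=\sum_{i}c_i(M)Q(p^{-i})=\sum_{i>u}c_i(M)Q(p^{-i})=0$; as this holds for every $\ppp\in\comp\XXX$ we conclude $Q(\Phi_\XXX)(\alpha)=0$, which is the first identity in the statement. Since $Q$ has the $u+1$ distinct roots $p^{-i}$, the subspace $V=\operatorname{span}_{\QQ}\{\alpha,\Phi_\XXX(\alpha),\dots,\Phi_\XXX^{u}(\alpha)\}$ is $\Phi_\XXX$-invariant, of dimension at most $u+1$, with $\Phi_\XXX|_V$ annihilated by $Q$ and hence diagonalizable; the Lagrange idempotents $\pi_i=\prod_{j\neq i}\frac{p^{j}\Phi_\XXX-\id}{p^{j-i}-1}$ are its spectral projections, and setting $\alpha^{(i)}=\pi_i(\alpha)$ gives $\alpha=\alpha^{(0)}+\cdots+\alpha^{(u)}$ with each $\alpha^{(i)}$ either zero or an eigenvector of eigenvalue $p^{-i}$. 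Uniqueness is immediate from the linear independence of eigenvectors belonging to distinct eigenvalues.

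Finally, applying $\Phi_\XXX^{e}$ to the decomposition yields $\Phi_\XXX^{e}(\alpha)=\sum_{i=0}^{u}(p^{-i})^{e}\alpha^{(i)}$, which read off for $e=0,1,\dots,u$ is precisely the Vandermonde system in the statement --- the coefficient matrix being the Vandermonde matrix in the nodes $p^{0},p^{-1},\dots,p^{-u}$, hence invertible --- so inverting recovers the stated formula for the $\alpha^{(i)}$. For the recursive formulas, recall that the topology on $\grot\proj(\XXX)$ is initial for the maps $\chi(-,Z)$, and $\chi(p^{-ie}\alpha^{(i)},Z)=p^{-ie}\chi(\alpha^{(i)},Z)\to0$ for every $i\geq1$; hence the expansion $\Phi_\XXX^{e}(\alpha)=\sum_i(p^{-i})^{e}\alpha^{(i)}$ converges to $\alpha^{(0)}$, and subtracting $\alpha^{(0)}+\cdots+\alpha^{(i-1)}$ and multiplying by $p^{ie}$ leaves $\alpha^{(i)}+\sum_{l>i}p^{(i-l)e}\alpha^{(l)}\to\alpha^{(i)}$, which is the asserted recursion. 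The only genuinely hard ingredient is the Chern-component control of the second paragraph; everything else is formal.
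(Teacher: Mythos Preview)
The paper does not prove Theorem~\ref{diagonalizingthefrobenius} at all: it is quoted verbatim from \cite[Theorem~19]{halvorsenF} (see the sentence immediately preceding the theorem, ``the precise statement is recalled in the next theorem''), and the only argument the paper supplies in this section is the proof of the injective analogue, Theorem~\ref{injmaintheo}, which \emph{uses} Theorem~\ref{diagonalizingthefrobenius} as a black box via dagger duality. So there is nothing in the present paper to compare your proof against, beyond the formal tail of the argument.

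That formal tail---deducing the decomposition, the Vandermonde formula, and the recursive limits from the single identity $(p^u\Phi_\XXX-\id)\circ\cdots\circ(\Phi_\XXX-\id)(\alpha)=0$---is handled in the paper's proof of Theorem~\ref{injmaintheo} in exactly the way you describe (recursion with characteristic polynomial $\prod_i(p^ix-1)$, Vandermonde inversion, term-by-term limits in the initial topology), so your last two paragraphs are in complete agreement with the paper's style of argument.

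As for the substantive part, your outline is a plausible reconstruction of what the cited proof in \cite{halvorsenF} must contain: reduce via Proposition~\ref{grothendieckobs} to testing against $R/\ppp$, invoke the polynomiality in $p^e$ of $e\mapsto\chi(\lfrob^e(X),R/\ppp)$ with degree bounded by $\dim R/\ppp$, and then argue that the hypothesis $u\geq\vandim\alpha$ kills the coefficients $c_i(M)$ for $i>u$. You correctly flag this last step as the crux, and your two suggested routes (local Chern characters/Riemann--Roch, or an induction on $\codim\XXX$ using the compatibility of $\Phi$ with inclusion homomorphisms and Proposition~\ref{vanconditions}\eqref{A2}) are both reasonable; the inductive route is closer in spirit to how the paper organises the analogous material for $\grot\inj(\XXX)$ (cf.\ Remark~\ref{injdecompositioninclusion} and Propositions~\ref{injvandim0}--\ref{injvandimv}). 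One small correction: your claim that ``$\Phi_\XXX^e(\alpha)$ has the same vanishing dimension as $\alpha$'' does not follow merely from $\supp\lfrob^e(X)=\supp X$; it needs the fact that $\Phi_\XXX$ is an automorphism (or at least that $\vandim$ is $\Phi_\XXX$-invariant), which is part of what you are proving, so you should not lean on it at that point.
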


\begin{theo}\label{injmaintheo}
  Assume that $R$ is complete of prime characteristic $p$ and with perfect
  residue field, and let $\XXX$ be a specialization-closed subset of $\spec R$. If
  $\beta$ is an element in $\grot\inj(\XXX)$ and $v$ is a non-negative
  integer with $v \geq \vandim\beta$, then
  \begin{equation*}
    (p^v\Psi_\XXX-\id)\circ \cdots \circ (p\Psi_\XXX-\id) \circ (\Psi_\XXX-\id)(\beta) =0,
  \end{equation*}
  and there exists a unique decomposition
  \begin{equation*}
    \beta=\beta^{(0)}+\cdots + \beta^{(v)} ,
  \end{equation*}
  in which each $\beta^{(i)}$ is either zero or an eigenvector for
  $\Psi_\XXX$ with eigenvalue $p^{-i}$.
The elements $\beta^{(i)}$ can be computed according to the formula 
 \begin{equation}\label{injgeneralformula}
    \begin{pmatrix} \beta^{(0)} \\ \vdots \\ \beta^{(u)} \end{pmatrix} =
    \begin{pmatrix} 1 & 1  & \cdots & 1 \\ 
      1 & p^{-1} & \cdots & p^{-v} \\
      \vdots & \vdots & \ddots & \vdots \\
      1 & p^{-v} & \cdots & p^{- v^2}\end{pmatrix}^{\!\!\!-1}
    \!\!\!\begin{pmatrix} \beta \\ \Psi_\XXX(\beta) \\ \vdots \\
    \Psi_\XXX^v(\beta) \end{pmatrix},
  \end{equation} 
and may also be recursively obtained as
\[
\beta^{(0)}=\lim_{e\to\infty}\Psi_\XXX^e(\beta) \quad\text{and}\quad \beta^{(i)}=
\lim_{e\to\infty} p^{ie}\Psi^e_\XXX(\beta-(\beta^{(0)}+\cdots +\beta^{(i-1)})).
\]
\end{theo}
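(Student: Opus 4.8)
The plan is to obtain Theorem~\ref{injmaintheo} as the dagger-dual of Theorem~\ref{diagonalizingthefrobenius}, by conjugating the endomorphism $\Phi_\XXX$ with the isomorphism of Grothendieck spaces induced by dagger duality. The first step is to record the structural identity $\Psi_\XXX=(-)^\dagger\circ\Phi_\XXX\circ(-)^\dagger$. Indeed, Lemma~\ref{frobherzog} (see also Corollary~\ref{herzog}) gives $\rherzog^n(-)\simeq(-)^\dagger\circ\lfrob^n\circ(-)^\dagger$, so passing to Grothendieck spaces by Proposition~\ref{inducedisos} yields $\herzog^n_\XXX=(-)^\dagger\circ\frob^n_\XXX\circ(-)^\dagger$, where the two occurrences of $(-)^\dagger$ are the mutually inverse isomorphisms $\grot\inj(\XXX)\to\grot\proj(\XXX)$ and $\grot\proj(\XXX)\to\grot\inj(\XXX)$ induced by dagger duality (recall $(-)^{\dagger\dagger}=\id$ by biduality). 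Dividing by $p^{n\codim\XXX}$ gives $\Psi^n_\XXX=(-)^\dagger\circ\Phi^n_\XXX\circ(-)^\dagger$ for every $n$, and hence $\Psi_\XXX^k=(-)^\dagger\circ\Phi_\XXX^k\circ(-)^\dagger$ for every $k\geq 0$, since the inner daggers cancel in consecutive pairs.

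Next I would set $\alpha=\beta^\dagger\in\grot\proj(\XXX)$ and use Remark~\ref{dimformulas} to note that $\vandim\alpha=\vandim\beta\leq v$, so Theorem~\ref{diagonalizingthefrobenius} applies to $\alpha$ with this value of $v$. It provides the identity $(p^v\Phi_\XXX-\id)\circ\cdots\circ(\Phi_\XXX-\id)(\alpha)=0$ and the decomposition $\alpha=\alpha^{(0)}+\cdots+\alpha^{(v)}$ with each $\alpha^{(i)}$ zero or an eigenvector of $\Phi_\XXX$ with eigenvalue $p^{-i}$. Applying the $\QQ$--linear isomorphism $(-)^\dagger\colon\grot\proj(\XXX)\to\grot\inj(\XXX)$ and using $\Psi_\XXX^k\circ(-)^\dagger=(-)^\dagger\circ\Phi_\XXX^k$ converts the polynomial identity into $(p^v\Psi_\XXX-\id)\circ\cdots\circ(\Psi_\XXX-\id)(\beta)=0$. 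Setting $\beta^{(i)}=(\alpha^{(i)})^\dagger$ gives $\beta=\alpha^\dagger=\beta^{(0)}+\cdots+\beta^{(v)}$ and $\Psi_\XXX(\beta^{(i)})=(\Phi_\XXX(\alpha^{(i)}))^\dagger=p^{-i}\beta^{(i)}$, so each $\beta^{(i)}$ is zero or an eigenvector of $\Psi_\XXX$ with eigenvalue $p^{-i}$. Uniqueness transfers the same way: any decomposition of $\beta$ into zero terms and $p^{-i}$-eigenvectors of $\Psi_\XXX$ becomes, after applying $(-)^\dagger$, such a decomposition of $\alpha$ for $\Phi_\XXX$, which is unique by Theorem~\ref{diagonalizingthefrobenius}, forcing it to equal $(\beta^{(i)})$.

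The explicit formulas follow by applying $(-)^\dagger$ to the corresponding formulas in Theorem~\ref{diagonalizingthefrobenius}: $\QQ$--linearity of $(-)^\dagger$ intertwines the inverse-Vandermonde expression for $(\alpha^{(i)})$ in terms of $(\Phi_\XXX^j(\alpha))$ with the analogous one for $(\beta^{(i)})$ in terms of $(\Psi_\XXX^j(\beta))$, and continuity of $(-)^\dagger$ lets it commute with the limits, giving $\beta^{(0)}=\lim_{e\to\infty}\Psi_\XXX^e(\beta)$ and $\beta^{(i)}=\lim_{e\to\infty}p^{ie}\Psi_\XXX^e(\beta-(\beta^{(0)}+\cdots+\beta^{(i-1)}))$. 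I do not foresee a genuine obstacle; the only points requiring care are that $\Psi_\XXX=(-)^\dagger\circ\Phi_\XXX\circ(-)^\dagger$ and $(-)^{\dagger\dagger}=\id$ hold at the level of the induced maps on Grothendieck spaces, and that $(-)^\dagger$ is continuous so the limits survive the transfer — all immediate from Proposition~\ref{inducedisos} and the definition of the induced functors.
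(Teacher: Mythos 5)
Your proposal is correct and follows essentially the same route as the paper: both rest on the conjugation identity $\Psi_\XXX=(-)^\dagger\circ\Phi_\XXX\circ(-)^\dagger$ coming from Lemma~\ref{frobherzog}, the equality $\vandim\beta=\vandim\beta^\dagger$, and a reduction to Theorem~\ref{diagonalizingthefrobenius} applied to $\alpha=\beta^\dagger$. The only difference is cosmetic: the paper transfers just the polynomial identity and then re-derives the eigenvector decomposition on the $\Psi_\XXX$-side by a linear recursion and Vandermonde argument, whereas you transport the decomposition, its uniqueness and the explicit formulas wholesale through the linear, continuous dagger isomorphism --- a perfectly valid shortcut.
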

\begin{proof}
On the injective Grothendieck space $\grot\inj(\XXX)$, the identities
described in Lemma~\ref{frobherzog} imply that we have the
following commutative diagram.
\[
\xymatrix@C+40pt@R+10pt{
  {\grot\proj(\XXX)} \ar[r]^-{\Phi_\XXX^n}_-{\cong} 
  \ar[d]_-{(-)^{\dagger}}^-{\cong} & {\grot\proj(\XXX)} \ar@{<-}[d]^-{(-)^{\dagger}}_-{\cong} \\
  {\grot\inj(\XXX)} \ar[r]^-{\Psi_\XXX^n} & {\grot\inj(\XXX)}
    }
\]
In particular,
  \begin{equation*}
    \Psi_\XXX(-) = (-)^{\dagger} \circ \Phi_\XXX \circ (-)^{\dagger}.
  \end{equation*}
By Remark~\ref{dimformulas}, we have $v\geq
\vandim\beta=\vandim\beta^\dagger$, so Theorem~\ref{diagonalizingthefrobenius}
and the above identity yields that 
\begin{align}\label{psiformula}
  (p^v\Psi_\XXX - \id)\circ \cdots \circ (p\Psi_\XXX - \id) \circ (\Psi_\XXX -\id)(\beta) 
  &= 0.
\end{align} 
Applying $\Psi_\XXX^{e-v}$ to~\eqref{psiformula} results in a
recursive formula to compute $\Psi_\XXX^{e+1}(\beta)$ from 
$\Psi_\XXX^e(\beta),\dots ,\Psi_\XXX^{e-v}(\beta)$. The characteristic
polynomial for the recursion is
\[
(p^vx-1)\cdots (px-1)(x-1),
\]
which has $v+1$ distinct roots $1,p^{-1},\dots ,p^{-v}$. Consequently,
there exist elements $\beta^{(0)},\dots ,\beta^{(v)}$ such that
\[
\Psi_\XXX^e(\beta) = \beta^{(0)}+p^{-e}\beta^{(1)} +\cdots +p^{-ve}\beta^{(v)},
\]
where each $\beta^{(i)}$ is an eigenvector for $\Psi_\XXX$ with
eigenvalue $p^{-i}$. Setting $e=0$ obtains the decomposition
$\beta=\beta^{(0)}+\cdots +\beta^{(v)}$, and solving the system
of linear equations obtained by setting $e=0,\dots ,v$
shows~\eqref{injgeneralformula}; observe that the matrix is the Vandermonde 
matrix on $1,p^{-1},\dots ,p^{-v}$, which is invertible. The formula
also immediately shows that
$\lim_{e\to\infty}\Psi^e(\beta)=\beta^{(0)}$ and that
\begin{align*}
\lim_{e\to\infty}p^{ie}\Psi_\XXX^e(\beta-(\beta^{(0)}+\cdots
+\beta^{(i-1)})) &=
\lim_{e\to\infty}p^{ie}\Psi_\XXX^e(\beta^{(i)}+\cdots +\beta^{(v)}) \\
&= \lim_{e\to\infty}(\beta^{(i)}+\cdots + p^{-(v-i)e}\beta^{(v)}) \\
 &= \beta^{(i)}.
\end{align*}
This concludes the argument.
\end{proof}

\begin{prop}\label{D2}
  Assume that $R$ is a complete ring of prime characteristic $p$ and
  with perfect residue field, and let $\XXX$ be a specialization-closed subset of $\spec R$. Consider the
  following diagram.
\begin{equation*}
\xymatrix@R+25pt@C=-12pt{
{\phantom{|}} \ar@(dl,ul)[]^{\Phi_\XXX}  & {\grot\proj(\XXX)}  
\ar@<.6ex>[rr]^-{(-)^\dagger}
\ar@/^20pt/^-{D\otimes -}[rr] & {\hspace{150pt}} & {\,\grot\inj(\XXX)} 
\ar@<.6ex>[ll]^-{(-)^\dagger} \ar@/^20pt/[ll]^-{\hom(D,-)} & {\phantom{|}} \ar@(dr,ur)[]_{\Psi_\XXX}
}
\end{equation*}
 For the Grothendieck space $\grot\inj(\XXX)$, we have the following identities.
\begin{align*} 
\Psi_\XXX(-) = \Phi_\XXX(-^\dagger)^\dagger &= D\otimes \Phi_\XXX(\hom(D,-)) .\\
 \quad (-)^{(i)} = (-)^{\dagger (i)\dagger} &= D\otimes 
 (\hom(D,-)^{(i)}).
\end{align*}
\end{prop}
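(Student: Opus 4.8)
The plan is to derive all four identities from Lemma~\ref{frobherzog}, Proposition~\ref{inducedisos}, and the uniqueness clauses in Theorems~\ref{diagonalizingthefrobenius} and~\ref{injmaintheo}. The abstract fact I will use repeatedly is that if $\phi\colon V\to W$ is an isomorphism of topological $\QQ$--vector spaces intertwining diagonalizable operators, $T\circ\phi=\phi\circ S$, then $\phi$ carries $S$--eigenvectors to $T$--eigenvectors with the same eigenvalue, hence transports the eigendecomposition of any $v\in V$ to a decomposition of $\phi(v)$ into $T$--eigenvectors; the uniqueness of such a decomposition then gives $\phi(v)^{(i)}=\phi(v^{(i)})$.

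First I would record the two conjugation formulas. The identity $\Psi_\XXX=(-)^{\dagger}\circ\Phi_\XXX\circ(-)^{\dagger}$ is already contained in the proof of Theorem~\ref{injmaintheo}: by Proposition~\ref{inducedisos} the dagger functor induces an isomorphism $(-)^{\dagger}\colon\grot\proj(\XXX)\to\grot\inj(\XXX)$ with $(-)^{\dagger\dagger}=\id$, and the first row of Lemma~\ref{frobherzog} descends to $\Psi_\XXX^n\circ(-)^{\dagger}=(-)^{\dagger}\circ\Phi_\XXX^n$ on Grothendieck spaces; taking $n=1$ and composing with $(-)^{\dagger}$ gives the first displayed equality. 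For the Foxby version I would pass the isomorphism $D\ltensor_R\lfrob^n(-)\simeq\rherzog^n(D\ltensor_R-)$ of Lemma~\ref{frobherzog} to Grothendieck spaces: since $D\ltensor_R-$ and $\rhom_R(D,-)$ induce mutually inverse isomorphisms $D\otimes-\colon\grot\proj(\XXX)\to\grot\inj(\XXX)$ and $\hom(D,-)\colon\grot\inj(\XXX)\to\grot\proj(\XXX)$ (Proposition~\ref{inducedisos}), and since $\Phi_\XXX$ and $\Psi_\XXX$ are both normalized by the same factor $p^{n\codim\XXX}$, the isomorphism reads $(D\otimes-)\circ\Phi_\XXX^n=\Psi_\XXX^n\circ(D\otimes-)$; taking $n=1$ and composing on the right with $\hom(D,-)$ yields $\Psi_\XXX=(D\otimes-)\circ\Phi_\XXX\circ\hom(D,-)$, the second displayed equality.

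Finally, for the component identities I would invoke the abstract fact above. By Remark~\ref{dimformulas}, $\vandim\beta=\vandim\beta^{\dagger}=\vandim\hom(D,\beta)$, so Theorems~\ref{diagonalizingthefrobenius} and~\ref{injmaintheo} apply with a common $v$ to $\beta$, to $\beta^{\dagger}\in\grot\proj(\XXX)$, and to $\hom(D,\beta)\in\grot\proj(\XXX)$. Applying the isomorphism $(-)^{\dagger}$, which intertwines $\Phi_\XXX$ and $\Psi_\XXX$, to the decomposition $\beta^{\dagger}=(\beta^{\dagger})^{(0)}+\cdots+(\beta^{\dagger})^{(v)}$ produces a decomposition of $\beta=\beta^{\dagger\dagger}$ into $\Psi_\XXX$--eigenvectors with eigenvalues $1,p^{-1},\dots,p^{-v}$; uniqueness forces $\beta^{(i)}=(\beta^{\dagger})^{(i)\dagger}$. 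Running the identical argument with $D\otimes-$ in place of $(-)^{\dagger}$, applied to the decomposition of $\hom(D,\beta)$, gives $\beta^{(i)}=D\otimes(\hom(D,\beta)^{(i)})$. The only points needing care — and the nearest thing to an obstacle — are checking that the normalizing powers of $p$ match across the dagger and Foxby equivalences (they do, since these functors preserve supports and hence co-dimension) and being explicit that the transport of components uses the uniqueness clauses rather than re-deriving diagonalization; both are routine.
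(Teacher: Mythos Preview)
Your proposal is correct and follows essentially the same approach as the paper: derive the conjugation formulas from Lemma~\ref{frobherzog} passed to Grothendieck spaces, then transport the eigendecomposition of $\beta^{\dagger}$ (respectively $\hom(D,\beta)$) through the intertwining isomorphism and invoke uniqueness from Theorem~\ref{injmaintheo}. The paper's argument is terser but structurally identical; your explicit remark that the normalizing powers of $p$ agree because dagger and Foxby preserve supports is a helpful clarification the paper leaves implicit.
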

\begin{proof}
The formulas in the first line are an immediate consequence of
Lemma~\ref{frobherzog}. Let $\beta$ be an element in 
$\grot\inj(\XXX)$. Using the decomposition in $\grot\proj(\XXX)$ from 
Theorem~\ref{diagonalizingthefrobenius}, we can write 
\[
\beta=\beta^{\dagger\dagger} = \beta^{\dagger(0)\dagger} +\cdots +\beta^{\dagger(v)\dagger},
\]
and since
\[
\Psi_\XXX(\beta^{\dagger(i)\dagger}) =
\Phi_\XXX(\beta^{\dagger(i)})^\dagger =
p^{-i}\beta^{\dagger(i)\dagger} ,
\]
we learn from the uniqueness of the decomposition that
$\beta^{(i)}=\beta^{\dagger(i)\dagger}$. This proves the first
equality in the second line. The last equality follows by similar considerations.
\end{proof}

\begin{rema}\label{duttaeulerformulas}
In~\cite[Remark~21]{halvorsenF} it is established that the Dutta
multiplicity is computable. Employing
Theorems~\ref{diagonalizingthefrobenius} and~\ref{injmaintheo}
together with the fact from Proposition~\ref{inducedhomomorphisms}
that the induced Hom-homomorphism on Grothendieck spaces is continuous
in both variables, it follows, as will be shown below,  that the two analogs of Dutta
multiplicity are also computable.

Let $X$ and $Y$ be finite complexes. Set $\XXX=\supp X$ and
$\YYY=\supp Y$, and assume that $\XXX\cap\YYY=\{\mmm\}$ and $\dim
\XXX+\dim \YYY\leq\dim R$. Then, in the case where $Y$ is in
$\inj(R)$, the multiplicity $\xi_\infty(X,Y)$ can be identified
via~\eqref{chiisomorphism} with the element  
\begin{align*}
\lim_{e\to\infty}\hom([X]_{\der(\comp\YYY)},\Psi^e_\YYY(Y)) &=
\hom([X]_{\der(\comp\YYY)},\lim_{e\to\infty}\Psi^e_\YYY(Y)) \\
&= \hom([X]_{\der(\comp\YYY)},[Y]_{\inj(\YYY)}^{(0)}) ,
\end{align*}
whereas, in the case where $X$ is in $\proj(R)$, the multiplicity 
$\xi^\infty(X,Y)$ can be identified via~\eqref{chiisomorphism} with the element
\begin{align*}
\lim_{e\to\infty}\hom(\Phi^e_\XXX(X),[Y]_{\der(\comp\XXX)}) &=
\hom(\lim_{e\to\infty}\Phi^e_\XXX(X),[Y]_{\der(\comp\XXX)}) \\
 &= \hom([X]^{(0)}_{\proj(\XXX)},[Y]_{\der(\comp\XXX)}) .
\end{align*}
The formulas in Theorems~\ref{diagonalizingthefrobenius}
and~\ref{injmaintheo} now yield formulas for $\xi_\infty(X,Y)$ and 
$\xi^\infty(X,Y)$ as presented in the corollary below.
\end{rema}
\begin{coro}
  Assume that $R$ is a complete ring of prime characteristic $p$ and
  with perfect residue field. Let $X$ and $Y$ be finite complexes with
  \[
\supp X\cap\supp Y=\{\mmm\}\quad\text{and}\quad \dim(\supp
X)+\dim(\supp Y)\leq\dim R.
\]
When $Y\in\inj(R)$, letting $v$ denote the
vanishing dimension of $[Y]_{\inj(\supp Y)}$ and setting $t=\codim(\supp Y)$, we have
\[
\xi_\infty(X,Y) = 
\begin{pmatrix} 1 & 0 & \cdots &  0 \end{pmatrix}
 \begin{pmatrix} 1 & 1  & \cdots & 1 \\ 
 p^t & p^{t-1} & \cdots & p^{t-v} \\
 \vdots & \vdots & \ddots & \vdots \\
 p^{vt} & p^{v(t-1)} & \cdots &
 p^{v(t-v)}\end{pmatrix}^{\!\!\!-1}\!\!\! \begin{pmatrix} \xi(X,Y) \\ \xi(X,\rherzog(Y)) \\ \vdots \\
   \xi(X,\rherzog^v(Y)) \end{pmatrix} ,
\]
and when $X\in\proj(R)$, letting
$u$ denote the vanishing dimension of $[X]_{\proj(\supp X)}$ and setting $s=\codim(\supp X)$, we have
\[
\xi^\infty(X,Y) = 
\begin{pmatrix} 1 & 0 & \cdots &  0 \end{pmatrix}
 \begin{pmatrix} 1 & 1  & \cdots & 1 \\ 
 p^s & p^{s-1} & \cdots & p^{s-u} \\
 \vdots & \vdots & \ddots & \vdots \\
 p^{us} & p^{u(s-1)} & \cdots &
 p^{u(s-u)}\end{pmatrix}^{\!\!\!-1}\!\!\! \begin{pmatrix} \xi(X,Y) \\ \xi(\lfrob(X),Y) \\ \vdots \\
   \xi(\lfrob^u(X),Y) \end{pmatrix} .
\]
Thus, it is possible to calculate $\xi_\infty(X,Y)$ and $\xi^\infty(X,Y)$ as
$\QQ$--linear combinations of ordinary Euler forms; in particular,
they are rational numbers.
\end{coro}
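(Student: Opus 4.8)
The plan is to feed the closed Vandermonde formulas of Theorems~\ref{injmaintheo} and~\ref{diagonalizingthefrobenius} into the identifications of $\xi_\infty(X,Y)$ and $\xi^\infty(X,Y)$ already obtained in Remark~\ref{duttaeulerformulas}, and then to reconcile the Vandermonde matrices appearing there with the ones printed in the statement by a diagonal rescaling. I would carry out the case of $\xi_\infty(X,Y)$ in detail; the case of $\xi^\infty(X,Y)$ is identical after replacing $\rherzog$, $\Psi_\YYY$, $v$ and $t=\codim(\supp Y)$ by $\lfrob$, $\Phi_\XXX$, $u$ and $s=\codim(\supp X)$, and $\comp\YYY$ by $\comp\XXX$.

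First I would set $\YYY=\supp Y$, $\beta=[Y]_{\inj(\YYY)}$ and $t=\codim\YYY$, note that $v=\vandim\beta$, and invoke Theorem~\ref{injmaintheo} to obtain the decomposition $\beta=\beta^{(0)}+\cdots+\beta^{(v)}$ together with the formula expressing the components $\beta^{(i)}$ through $M^{-1}$, where $M$ is the $(v+1)\times(v+1)$ Vandermonde matrix on $1,p^{-1},\dots,p^{-v}$ used there. Writing $(m_0,\dots,m_v)$ for the first row of $M^{-1}$, this gives the finite $\QQ$-linear combination $\beta^{(0)}=\sum_{e=0}^{v}m_e\Psi_\YYY^e(\beta)$. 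Then I would invoke Remark~\ref{duttaeulerformulas}, which --- using that $\hom$ is continuous in its second variable (Proposition~\ref{inducedhomomorphisms}) to pull the defining limit of $\xi_\infty$ inside $\hom$ --- identifies $\xi_\infty(X,Y)$ under~\eqref{chiisomorphism} with $\hom([X]_{\der(\comp\YYY)},\beta^{(0)})$; here $\supp X\subseteq\comp\YYY$ because $\supp X\cap\YYY=\{\mmm\}$ and $\dim(\supp X)+\dim\YYY\leq\dim R$.

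Next I would apply the $\QQ$-linear functional $\hom([X]_{\der(\comp\YYY)},-)$ to the expression for $\beta^{(0)}$. Using $\Psi_\YYY^e=p^{-et}\herzog_\YYY^e$, $\herzog_\YYY^e(\beta)=[\rherzog^e(Y)]_{\inj(\YYY)}$, and the fact that the induced Hom sends $\hom([X]_{\der(\comp\YYY)},[\rherzog^e(Y)]_{\inj(\YYY)})$ to $\chi(\rhom_R(X,\rherzog^e(Y)))=\xi(X,\rherzog^e(Y))$ under~\eqref{chiisomorphism}, this produces
\[
\xi_\infty(X,Y)=\sum_{e=0}^{v}m_e\,p^{-et}\,\xi(X,\rherzog^e(Y)).
\]

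It then remains to reconcile the matrices. Let $N$ be the matrix printed in the statement, whose $(e,j)$-entry is $p^{e(t-j)}$ for $0\leq e,j\leq v$, and set $\Delta=\operatorname{diag}(1,p^{t},\dots,p^{vt})$. Since $(\Delta M)_{ej}=p^{et}\cdot p^{-ej}=p^{e(t-j)}$ we have $N=\Delta M$; hence $N$ is invertible (it is in fact itself a Vandermonde matrix, on the distinct points $p^{t},p^{t-1},\dots,p^{t-v}$), $N^{-1}=M^{-1}\Delta^{-1}$, and the first row of $N^{-1}$ equals $(m_0,m_1p^{-t},\dots,m_vp^{-vt})$. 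Substituting into the preceding display gives precisely the asserted formula for $\xi_\infty(X,Y)$, and the parallel computation with $\Phi_\XXX$, $\lfrob$, $s$, $u$ gives the one for $\xi^\infty(X,Y)$; rationality then follows at once, since $N^{-1}$ has rational entries and the Euler forms are integers. I do not expect a genuine obstacle here: the substance has already been carried out in Theorems~\ref{diagonalizingthefrobenius} and~\ref{injmaintheo} and in Remark~\ref{duttaeulerformulas}, and what is left is bookkeeping --- tracking the scalar $p^{-et}$ introduced by $\Psi_\YYY^e=p^{-et}\herzog_\YYY^e$ and recognizing the resulting weighted Vandermonde matrix as $\Delta M$, whose inverse rearranges into the matrix printed in the statement.
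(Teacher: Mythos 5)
Your proposal is correct and follows essentially the same route as the paper: the paper's justification is exactly Remark~\ref{duttaeulerformulas} (continuity of the induced $\hom$ to identify $\xi_\infty(X,Y)$ with $\hom([X]_{\der(\comp\YYY)},\beta^{(0)})$, then the Vandermonde formula of Theorem~\ref{injmaintheo}), and your diagonal rescaling $N=\Delta M$ correctly accounts for the factor $p^{-et}$ in $\Psi^e_\YYY=p^{-et}\herzog^e_\YYY$ that converts the matrix on $1,p^{-1},\dots,p^{-v}$ into the one printed in the statement.
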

Note that the above corollary also can be obtained directly
from~\cite[Remark~21]{halvorsenF} by employing Lemma~\ref{frobherzog}
and the formulas in~\eqref{chixiformulas}.

\begin{rema}\label{injdecompositioninclusion}
  Let $\XXX$ and $\XXX'$ be specialization-closed subsets of $\spec R$ such that
  $\XXX\subseteq\XXX'$. Set $s = \codim\XXX - \codim\XXX'$ and
  consider the inclusion homomorphism
  \begin{equation*}
    \overline{(-)} \colon \grot\inj(\XXX)\to \grot\inj(\XXX').
  \end{equation*}
  Pick an element $\beta\in\grot\inj(\XXX)$, and apply the
  convention that $\beta^{(t)}=0$ for all negative integers $t$.  It
  follows immediately that
  \begin{equation*}
    \Psi_{\XXX'}(\overline \beta) = p^s\overline{\Psi_\XXX(\beta)} ,
  \end{equation*}
  and employing Theorem~\ref{injmaintheo} we obtain the
  identity $\overline{\beta^{(i)}} = \overline{\beta}^{(i - s)}$.
  The situation may be visualized as follows
  \begin{equation*}
  \xymatrix@C=0pt{
    {\grot\inj(\XXX) \ni} & \beta \ar@{->}[d] & = &
    \beta^{(0)}& + & \cdots & + &
    \beta^{(s)}\ar@{->}[lllld] & + &
    \beta^{(s+1)}\ar@{->}[lllld]  & + & \cdots & + & \beta^{(v)}\ar@{->}[lllld] \\
    {\grot\inj(\XXX') \ni} & \overline\beta &=&
    \overline\beta^{(0)}& + & \overline\beta^{(1)} & + & \cdots & + &
    \overline\beta^{(v-s)}. \\
    }
   \end{equation*}
There are similar results for elements $\alpha\in\grot\proj(\XXX)$;
see~\cite[Remark~20]{halvorsenF}.
\end{rema}

  The following two propositions characterize vanishing dimension for
  elements of the Grothendieck space 
  $\grot\inj(\XXX)$. They should be read in parallel with
  Propositions~\ref{injvandim0} and~\ref{injvandimv}. There are
  similar results for the Grothendieck space $\grot\proj(\XXX)$;
  see~\cite[Proposition~23 and~24]{halvorsenF}.

\begin{prop}\label{injconditionsdim0}
  Assume that $R$ is complete of prime characteristic $p$ and with
  perfect residue field. Let $\XXX$ be a specialization-closed subset of $\spec R$
  and let $\beta \in \grot\inj(\XXX)$. The following are equivalent.
  \begin{enumerate}
   \item \label{C1} $\beta$ satisfies vanishing.
   \item \label{C2} $\vandim\beta\leq 0$.  
   \item \label{C3} $\beta=\beta^{(0)}$.
   \item \label{C4} $\beta = \Psi_\XXX(\beta)$.
   \item \label{C5} $\beta = \Psi_\XXX^e(\beta)$ for some $e\in\NN$.
   \item \label{C6} $\beta = \lim_{e\to\infty}\Psi_\XXX^e(\beta)$.
\end{enumerate}
\end{prop}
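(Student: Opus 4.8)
The plan is to run the cycle of implications \eqref{C1} $\Rightarrow$ \eqref{C2} $\Rightarrow$ \eqref{C3} $\Rightarrow$ \eqref{C4} $\Rightarrow$ \eqref{C5} $\Rightarrow$ \eqref{C6} $\Rightarrow$ \eqref{C3} $\Rightarrow$ \eqref{C2} $\Rightarrow$ \eqref{C1}, almost every step of which is a direct consequence of Theorem~\ref{injmaintheo}. First, \eqref{C1} $\Leftrightarrow$ \eqref{C2} is nothing but the reformulation recorded in~\ref{projvan}, that vanishing dimension at most $0$ is the same as satisfying vanishing. For \eqref{C2} $\Rightarrow$ \eqref{C3} I would apply Theorem~\ref{injmaintheo} with $v=0$: the hypothesis $v\geq\vandim\beta$ is met, and the theorem delivers the (necessarily trivial) decomposition $\beta=\beta^{(0)}$. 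The step \eqref{C3} $\Rightarrow$ \eqref{C4} holds because $\beta^{(0)}$ is by construction either zero or an eigenvector of $\Psi_\XXX$ with eigenvalue $p^{0}=1$, so $\Psi_\XXX(\beta)=\Psi_\XXX(\beta^{(0)})=\beta^{(0)}=\beta$; and \eqref{C4} $\Rightarrow$ \eqref{C5} is trivial (take $e=1$).

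For \eqref{C5} $\Rightarrow$ \eqref{C6} I would fix any $v\geq\vandim\beta$ and use the decomposition $\beta=\beta^{(0)}+\cdots+\beta^{(v)}$ from Theorem~\ref{injmaintheo}, together with the resulting formula $\Psi_\XXX^{e}(\beta)=\beta^{(0)}+p^{-e}\beta^{(1)}+\cdots+p^{-ve}\beta^{(v)}$. If $\beta=\Psi_\XXX^{e}(\beta)$ for some positive integer $e$, then comparing these two expressions and invoking the uniqueness of the eigenvector decomposition (the eigenvalues $1,p^{-1},\dots,p^{-v}$ being distinct) forces $(1-p^{-ie})\beta^{(i)}=0$, hence $\beta^{(i)}=0$ for all $i\geq 1$; thus $\beta=\beta^{(0)}=\lim_{e\to\infty}\Psi_\XXX^{e}(\beta)$. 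The implication \eqref{C6} $\Rightarrow$ \eqref{C3} is then immediate, since Theorem~\ref{injmaintheo} asserts $\lim_{e\to\infty}\Psi_\XXX^{e}(\beta)=\beta^{(0)}$ without any hypothesis.

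The only step carrying genuine content is closing the cycle with \eqref{C3} $\Rightarrow$ \eqref{C2}. Here I would argue as follows: assume $\beta=\beta^{(0)}$ and pick any specialization-closed $\XXX'\supseteq\XXX$ with $\codim\XXX'=\codim\XXX-1$, so that $s=\codim\XXX-\codim\XXX'=1$. By Remark~\ref{injdecompositioninclusion} the inclusion homomorphism satisfies $\overline{\beta^{(i)}}=\overline{\beta}^{(i-s)}$, so with $i=0$ and the convention that negatively-indexed components vanish we get $\overline\beta=\overline{\beta^{(0)}}=0$ in $\grot\inj(\XXX')$ (equivalently: $\Psi_{\XXX'}(\overline\beta)=p\,\overline\beta$, while every eigenvalue of $\Psi_{\XXX'}$ lies in $\{p^{-j}:j\geq 0\}$ by Theorem~\ref{injmaintheo}, forcing $\overline\beta=0$). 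Proposition~\ref{injvandim0}, implication \eqref{B4} $\Rightarrow$ \eqref{B1}, then yields $\vandim\beta\leq 0$, and \eqref{C2} $\Rightarrow$ \eqref{C1} is again the definition. I expect this last bridge — translating the eigenvector description of $\beta^{(0)}$ back into the intrinsic vanishing statement by pushing forward along inclusion homomorphisms — to be the only point requiring care; all remaining implications are routine bookkeeping with the diagonalization of $\Psi_\XXX$.
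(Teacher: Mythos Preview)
Your proof is correct and follows essentially the same route as the paper: the equivalence \eqref{C1}$\Leftrightarrow$\eqref{C2} is definitional, \eqref{C2}$\Rightarrow$\eqref{C3} and the equivalences among \eqref{C3}--\eqref{C6} all come from Theorem~\ref{injmaintheo}, and the closing step \eqref{C3}$\Rightarrow$\eqref{C2} is obtained exactly as you do, via Remark~\ref{injdecompositioninclusion} and Proposition~\ref{injvandim0}. You spell out the cycle \eqref{C3}--\eqref{C6} in more detail than the paper (which simply attributes it to Theorem~\ref{injmaintheo}), but the argument and the ingredients are the same.
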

\begin{proof}
  By definition \eqref{C1} and \eqref{C2} are equivalent, and from
  Theorem~\ref{injmaintheo} it follows that \eqref{C2}
  implies~\eqref{C3}. Moreover, Theorem~\ref{injmaintheo} shows that
  the four conditions \eqref{C3}--\eqref{C6} are equivalent. Finally,
  condition~\eqref{C3} implies condition~\eqref{C1} through a reference to
  Remark~\ref{injdecompositioninclusion} and Proposition~\ref{injvandim0}.
\end{proof}

\begin{prop}\label{injconditionsdimu}
  Assume that $R$ is complete of prime characteristic $p$ and with
  perfect residue field. Let $\XXX$ be a specialization-closed subset of $\spec R$,
  let $\beta \in \grot\inj(\XXX)$ and let $v$ be a non-negative
  integer. The following are equivalent.
  \begin{enumerate}
    \item \label{CC1} $\vandim\beta \leq v$.
    \item \label{CC2} $\beta=\beta^{(0)}+\cdots +\beta^{(v)}$.
    \item \label{CC3} $(p^v\Psi_\XXX -\id)\circ \cdots \circ (p\Psi_\XXX-\id)\circ
    (\Psi_\XXX -\id)(\beta)=0$.
  \end{enumerate}
\end{prop}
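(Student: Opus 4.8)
The plan is to mirror the proof of Proposition~\ref{injvandim0} but use the additional structure afforded by the diagonalization of $\Psi_\XXX$ in Theorem~\ref{injmaintheo}, exactly as Proposition~\ref{injconditionsdim0} is the prime-characteristic refinement of Proposition~\ref{injvandim0}. So the first step is to observe that \eqref{CC1} and \eqref{CC2} are equivalent: if $\vandim\beta\leq v$, then Theorem~\ref{injmaintheo} applies with this value of $v$ and gives a decomposition $\beta=\beta^{(0)}+\cdots+\beta^{(v)}$; conversely, if $\beta=\beta^{(0)}+\cdots+\beta^{(v)}$ with each $\beta^{(i)}$ an eigenvector for $\Psi_\XXX$ with eigenvalue $p^{-i}$, one must deduce $\vandim\beta\leq v$. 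For this converse direction I would invoke Remark~\ref{injdecompositioninclusion}: passing to a specialization-closed $\XXX'\supseteq\XXX$ with $\codim\XXX'=\codim\XXX-v-1$, i.e.\ $s=v+1$, the inclusion sends $\beta^{(i)}$ to $\overline\beta^{(i-s)}=\overline\beta^{(i-v-1)}$, which is $0$ for all $i=0,\dots,v$ since the index is negative; hence $\overline\beta=0$ in $\grot\inj(\XXX')$, and then Proposition~\ref{injvandimv} (the characterization via inclusion into codimension $\codim\XXX-v-1$) yields $\vandim\beta\leq v$.

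The second step is the equivalence of \eqref{CC2} and \eqref{CC3}. Here I would argue directly from Theorem~\ref{injmaintheo}: the operator $(p^v\Psi_\XXX-\id)\circ\cdots\circ(\Psi_\XXX-\id)$ annihilates precisely the span of the eigenspaces for the eigenvalues $1,p^{-1},\dots,p^{-v}$. More concretely, if $\beta=\beta^{(0)}+\cdots+\beta^{(v)}$ then applying this operator kills each summand (the factor $p^i\Psi_\XXX-\id$ annihilates $\beta^{(i)}$), giving \eqref{CC3}. For the reverse implication, take any $\beta\in\grot\inj(\XXX)$; by Proposition~\ref{vanconditions}\eqref{A1} we have $\vandim\beta\leq\max(0,\codim\XXX-2)$, so Theorem~\ref{injmaintheo} furnishes a decomposition $\beta=\beta^{(0)}+\cdots+\beta^{(w)}$ for some finite $w$ with each $\beta^{(i)}$ an eigenvector for eigenvalue $p^{-i}$. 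Applying $(p^v\Psi_\XXX-\id)\circ\cdots\circ(\Psi_\XXX-\id)$ to this decomposition leaves exactly $\sum_{i>v} c_i\beta^{(i)}$ for nonzero scalars $c_i=\prod_{j=0}^{v}(p^{j-i}-1)$; since the eigenvalues $p^{-i}$ for distinct $i$ are distinct and the eigenvectors are linearly independent, vanishing of this sum forces $\beta^{(i)}=0$ for all $i>v$, which is \eqref{CC2}.

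The main obstacle, such as it is, lies in making the linear-independence argument in the second step clean: one needs that the $\beta^{(i)}$ occurring in different degrees are genuinely independent (equivalently, that the decomposition in Theorem~\ref{injmaintheo} is unique, which that theorem in fact asserts), so that annihilation of a $\QQ$-linear combination with nonzero coefficients implies the individual terms vanish. Since Theorem~\ref{injmaintheo} already records uniqueness of the decomposition, this is really just bookkeeping with the Vandermonde-type nonvanishing of $\prod_{j=0}^v(p^{j-i}-1)$ for $i>v$. I would therefore present the proof tersely, along the lines of the proof of Proposition~\ref{injconditionsdim0}: by definition \eqref{CC1} is equivalent to the vanishing condition, Theorem~\ref{injmaintheo} gives \eqref{CC1}$\Rightarrow$\eqref{CC2}, the eigenvector computation gives \eqref{CC2}$\Leftrightarrow$\eqref{CC3}, and Remark~\ref{injdecompositioninclusion} together with Proposition~\ref{injvandimv} gives \eqref{CC2}$\Rightarrow$\eqref{CC1}.

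\begin{proof}
  By definition \eqref{CC1} is equivalent to the statement that
  $\hom(\gamma,\beta)=0$ for all $\gamma\in\grot\der(\comp\XXX)$ with
  $\dim\gamma<\codim\XXX-v$.  From Theorem~\ref{injmaintheo} it follows
  that \eqref{CC1} implies \eqref{CC2}.  Conversely, assume
  \eqref{CC2}, and let $\XXX'$ be a specialization-closed subset of
  $\spec R$ with $\XXX\subseteq\XXX'$ and
  $\codim\XXX'=\codim\XXX-v-1$, so that $s=\codim\XXX-\codim\XXX'=v+1$.
  By Remark~\ref{injdecompositioninclusion} we have
  $\overline{\beta^{(i)}}=\overline{\beta}^{(i-v-1)}=0$ for
  $i=0,\dots,v$, hence $\overline\beta=0$ in $\grot\inj(\XXX')$; by
  Proposition~\ref{injvandimv} this gives $\vandim\beta\leq v$, which
  is \eqref{CC1}.

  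It remains to prove that \eqref{CC2} and \eqref{CC3} are equivalent.
  For any $\beta\in\grot\inj(\XXX)$, Proposition~\ref{vanconditions}\eqref{A1}
  gives $\vandim\beta\leq\max(0,\codim\XXX-2)$, so
  Theorem~\ref{injmaintheo} applies with this value and yields a
  unique decomposition $\beta=\beta^{(0)}+\cdots+\beta^{(w)}$ in which
  each $\beta^{(i)}$ is either zero or an eigenvector for $\Psi_\XXX$
  with eigenvalue $p^{-i}$.  Applying the operator
  $(p^v\Psi_\XXX-\id)\circ\cdots\circ(p\Psi_\XXX-\id)\circ(\Psi_\XXX-\id)$
  to this decomposition, the factor $p^i\Psi_\XXX-\id$ annihilates
  $\beta^{(i)}$ for $i=0,\dots,v$, so we obtain
  \begin{equation*}
    (p^v\Psi_\XXX-\id)\circ\cdots\circ(\Psi_\XXX-\id)(\beta)
    = \sum_{i=v+1}^{w} c_i\,\beta^{(i)}, \qquad
    c_i=\prod_{j=0}^{v}(p^{j-i}-1).
  \end{equation*}
  For $i>v$ each factor $p^{j-i}-1$ with $0\leq j\leq v$ is nonzero, so
  $c_i\neq 0$.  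If \eqref{CC2} holds, then $\beta^{(i)}=0$ for $i>v$
  and the right-hand side vanishes, giving \eqref{CC3}.  Conversely,
  if \eqref{CC3} holds, then $\sum_{i=v+1}^{w}c_i\beta^{(i)}=0$; since
  the eigenvalues $p^{-i}$ for distinct $i$ are distinct, the nonzero
  $\beta^{(i)}$ are linearly independent, so $c_i\beta^{(i)}=0$ and
  hence $\beta^{(i)}=0$ for all $i>v$, which is \eqref{CC2}.
\end{proof}
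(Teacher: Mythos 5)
Your proof is correct and follows essentially the same route as the paper's (very terse) argument: Theorem~\ref{injmaintheo} gives \eqref{CC1}$\Rightarrow$\eqref{CC2}, the eigenvector structure gives \eqref{CC2}$\Leftrightarrow$\eqref{CC3}, and Remark~\ref{injdecompositioninclusion} together with Proposition~\ref{injvandimv} gives \eqref{CC2}$\Rightarrow$\eqref{CC1}. You merely fill in details the paper leaves implicit (the nonvanishing constants $c_i$ and the linear independence of eigenvectors), and your last step pushes all of $\beta$ into $\grot\inj(\XXX')$ at once rather than computing $\vandim\beta^{(i)}=i$ componentwise, which is an immaterial variation.
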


\begin{proof}
  From Theorem~\ref{injmaintheo} it follows that~\eqref{CC1}
  implies~\eqref{CC2} which is equivalent to~\eqref{CC3}. Since
  $\beta^{(i)}\neq 0$ implies  $\vandim\beta^{(i)} = i$ by
  Remark~\ref{injdecompositioninclusion}  and
  Proposition~\ref{injvandimv}, it follows that~\eqref{CC2} implies~\eqref{CC1}.
\end{proof}

\begin{prop}\label{01}
  Assume that $R$ is complete of prime characteristic $p$ and with
  perfect residue field.  Let $\XXX$ and $\YYY$ be specialization-closed subsets of
  $\spec R$ such that $\XXX\cap\YYY=\{\mmm\}$ and $\dim\XXX+\dim\YYY\leq\dim R$, and set $e = \dim R -
  (\dim\XXX + \dim\YYY)$. If $(\sigma,\tau)$ is a pair of elements
  from
\[
\grot\proj(\XXX)\times \grot\proj(\YYY),\quad \grot\proj(\XXX)\times
\grot\inj(\YYY) \quad\text{or}\quad \grot\inj(\XXX)\times \grot\proj(\YYY),
\]
so that $\sigma\otimes\tau$ is a well-defined element of
$\grot\proj(\mmm)$ or $\grot\inj(\mmm)$, then
  \begin{equation}\label{tensorcomponent}
    (\sigma \otimes \tau)^{(i)} =
    \sum_{m + n = i + e}\sigma^{(m)}\otimes\tau^{(n)}.
  \end{equation}
If instead $(\sigma,\tau)$ is a pair of elements from
\[
\grot\proj(\XXX)\times \grot\proj(\YYY),\quad \grot\proj(\XXX)\times
\grot\inj(\YYY) \quad\text{or}\quad \grot\inj(\XXX)\times \grot\inj(\YYY),
\]
so that $\hom(\sigma,\tau)$ is a well-defined element of
$\grot\proj(\mmm)$ or $\grot\inj(\mmm)$, then
  \begin{equation*}
    \hom(\sigma,\tau)^{(i)} = 
    \sum_{m + n = i + e}\hom(\sigma^{(m)},\tau^{(n)}).
  \end{equation*}
\end{prop}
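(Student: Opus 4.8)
The plan is to deduce the two formulas from the diagonalization results, Theorems~\ref{diagonalizingthefrobenius} and~\ref{injmaintheo}, together with a multiplicativity property of the induced tensor and Hom bi-homomorphisms: applied to a compatible pair, the Frobenius operator on the target space agrees with the external product of the Frobenius operators on the two factors, up to a factor $p^{e}$. Throughout, for $\XXX=\{\mmm\}$ I write $\frob_{\mmm}$, $\herzog_{\mmm}$, $\Phi_{\mmm}$, $\Psi_{\mmm}$ for the operators obtained by specializing $\frob_\XXX$, $\herzog_\XXX$, $\Phi_\XXX$, $\Psi_\XXX$ to $\XXX=\{\mmm\}$, and I use $\codim\{\mmm\}=\dim R$.

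I would first establish the multiplicativity. Fix one of the six admissible pairs $(\sigma,\tau)$ and, via Proposition~\ref{grothendieckobs}, write $\sigma=r[X]$ and $\tau=s[Y]$ with $X$ and $Y$ in the relevant categories. Choosing the matching isomorphism from Lemma~\ref{square} — $\lfrob(X\ltensor_R X')\simeq\lfrob(X)\ltensor_R\lfrob(X')$ for the tensor case $\grot\proj(\XXX)\times\grot\proj(\YYY)\to\grot\proj(\mmm)$, $\rherzog(X\ltensor_R Y)\simeq\lfrob(X)\ltensor_R\rherzog(Y)$ for the two mixed tensor cases (using (Comm) to reorder the factors when the $\injg$-space comes first), and the three Hom isomorphisms $\lfrob(\rhom_R(X,X'))\simeq\rhom_R(\lfrob X,\lfrob X')$, $\rherzog(\rhom_R(X,Y))\simeq\rhom_R(\lfrob X,\rherzog Y)$, $\lfrob(\rhom_R(Y,Y'))\simeq\rhom_R(\rherzog Y,\rherzog Y')$ for the three Hom cases — and using the bilinearity of the induced maps (Proposition~\ref{inducedhomomorphisms}), one gets in each case an identity of the shape $\frob_{\mmm}(\sigma\otimes\tau)=\theta_\XXX(\sigma)\otimes\theta_\YYY(\tau)$ or $\herzog_{\mmm}(\sigma\otimes\tau)=\theta_\XXX(\sigma)\otimes\theta_\YYY(\tau)$, and similarly with $\hom$ in place of $\otimes$; here $\theta_\XXX\in\{\frob_\XXX,\herzog_\XXX\}$ is determined by whether $\XXX$ carries a $\projg$- or an $\injg$-space (and likewise for $\theta_\YYY$), and the operator on the left is determined by the target. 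Dividing both sides by $p^{\dim R}$ and using $\codim\XXX+\codim\YYY-\dim R=e$, this becomes, for instance in the first of the six cases, $\Phi_{\mmm}(\sigma\otimes\tau)=p^{e}\,\Phi_\XXX(\sigma)\otimes\Phi_\YYY(\tau)$, and correspondingly in the other cases, with some $\Phi$'s replaced by $\Psi$'s and $\otimes$ by $\hom$.

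Next I would substitute the eigendecompositions $\sigma=\sum_{a}\sigma^{(a)}$ and $\tau=\sum_{b}\tau^{(b)}$ from Theorems~\ref{diagonalizingthefrobenius} and~\ref{injmaintheo}, applied over $\XXX$ and $\YYY$. Applying the multiplicativity identity to $(\sigma^{(a)},\tau^{(b)})$ and using that $\sigma^{(a)}$ and $\tau^{(b)}$ are zero or eigenvectors with eigenvalues $p^{-a}$ and $p^{-b}$, one obtains that $\sigma^{(a)}\otimes\tau^{(b)}$ is zero or an eigenvector — for whichever of $\Phi_{\mmm}$, $\Psi_{\mmm}$ acts on the target — with eigenvalue $p^{-(a+b-e)}$, and similarly for $\hom$. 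Grouping the finitely many summands of $\sigma\otimes\tau=\sum_{a,b}\sigma^{(a)}\otimes\tau^{(b)}$ by the value of $i=a+b-e$ then writes $\sigma\otimes\tau$ as a finite sum of terms each of which is zero or an eigenvector with eigenvalue $p^{-i}$.

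Finally I would invoke uniqueness. The eigenvalues $p^{-i}$, $i\in\ZZ$, are pairwise distinct, so eigenvectors attached to distinct values of $i$ are linearly independent; comparing the sum just produced with the canonical decomposition $\sigma\otimes\tau=\sum_{i\geq0}(\sigma\otimes\tau)^{(i)}$ of Theorem~\ref{diagonalizingthefrobenius} (respectively Theorem~\ref{injmaintheo}) then forces $(\sigma\otimes\tau)^{(i)}=\sum_{a+b=i+e}\sigma^{(a)}\otimes\tau^{(b)}$ for every $i$, the sums with $i<0$ being forced to vanish — consistent with $e\geq0$ and with the convention $\sigma^{(m)}=\tau^{(n)}=0$ for negative indices. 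The Hom-identity is obtained by the same argument verbatim with $\otimes$ replaced by $\hom$. I expect the only genuine work, and the point most easily gotten wrong, to be the bookkeeping in the multiplicativity step: matching each of the six pairs of Grothendieck spaces with exactly one isomorphism of Lemma~\ref{square}, putting $\lfrob$ and $\rherzog$ in the correct positions and reading off the correct Frobenius operator on the target. Everything else is formal.
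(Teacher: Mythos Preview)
Your proposal is correct and follows essentially the same route as the paper: both arguments use Lemma~\ref{square} to obtain the multiplicativity $\frob_\mmm(\sigma^{(m)}\otimes\tau^{(n)})=\frob_\XXX(\sigma^{(m)})\otimes\frob_\YYY(\tau^{(n)})$ (and its variants), compute that the grouped sum $\sum_{m+n=i+e}\sigma^{(m)}\otimes\tau^{(n)}$ is an eigenvector for $\Phi_\mmm$ (respectively $\Psi_\mmm$) with eigenvalue $p^{-i}$, and then appeal to the uniqueness of the decomposition from Theorems~\ref{diagonalizingthefrobenius} and~\ref{injmaintheo}. The paper simply treats the $\grot\proj(\XXX)\times\grot\proj(\YYY)$ case explicitly and declares the remaining five similar, whereas you spell out the bookkeeping for all six pairs; otherwise the arguments coincide.
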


\begin{proof}
We will verify that~\eqref{tensorcomponent} holds in the case where
$(\sigma,\tau)$ is pair of elements from
$\grot\proj(\XXX)\times\grot\proj(\YYY)$. The verification of the remaining
statements is similar.

It suffices to argue that the element
  \begin{equation*}
    \alpha =  \sum_{m + n = i + e}\sigma^{(m)}\otimes\tau^{(n)} \in \grot\proj(\mmm)
  \end{equation*}
  is an eigenvector for $\Phi_\mmm = \Phi_{\XXX\cap\YYY}$ with
  eigenvalue $p^{-i}$. We compute
  \begin{align*}
    \Phi_\mmm(\alpha) 
    &= 
    \sum_{m + n = i + e}p^{-\dim R}\frob_\mmm(\sigma^{(m)}\otimes\tau^{(n)})\\
    &= 
    p^{-\dim R}\sum_{m + n = i + e}\frob_\XXX(\sigma^{(m)})\otimes \frob_\YYY(\tau^{(n)})\\
    &=
    p^{-\dim R}\sum_{m + n = i + e}p^{\codim\XXX}\Phi_\XXX(\sigma^{(m)})
                                 \otimes p^{\codim\YYY}\Phi_\YYY(\tau^{(n)})\\
    &=
    p^{-i}\sum_{m + n = i + e}\sigma^{(m)}\otimes\tau^{(n)} = p^{-i}\alpha.
  \end{align*}
  Here, all equalities but the second are propelled only by
  definitions. The second equality follows from Proposition~\ref{square}.
\end{proof}

In~\cite{halvorsenF}, the concept of ``numerical vanishing'' is
introduced for elements $\alpha$ of the Grothendieck space
$\grot\proj(\XXX)$. We here repeat the definition and extend it to
elements $\beta$ in the Grothendieck space $\grot\inj(\XXX)$.
  
\begin{defi}
Assume that $R$ is complete of prime characteristic $p$ and with
perfect residue field, and let $\XXX$ be a specialization-closed
subset of $\spec R$. An element $\alpha\in\grot\proj(\XXX)$ is said
to \emph{satisfy numerical vanishing} if the images in
$\grot\der(\mmm)$ of $\alpha$ and $\alpha^{(0)}$ coincide. An element
$\beta\in\grot\inj(\XXX)$ is said to \emph{satisfy numerical
  vanishing} if the images in  $\grot\der(\mmm)$ of $\beta$ and
$\beta^{(0)}$ coincide. The ring $R$ is said to \emph{satisfy
  numerical vanishing} if all elements of the Grothendieck space
$\grot\proj(\XXX)$ satisfy numerical vanishing for all specialization-closed
subsets $\XXX$ of $\spec R$.
\end{defi}

\begin{rema}\label{injnumvan}
$R$ satisfies numerical vanishing precisely when all elements of the Grothendieck space
$\grot\inj(\XXX)$ satisfy numerical vanishing for all specialization-closed
subsets $\XXX$ of $\spec R$. To see
this, simply note that, by Proposition~\ref{D2}, the element $\beta$ in $\grot\inj(\XXX)$
satisfies numerical vanishing if and only if the 
corresponding element $\beta^\dagger$ in $\grot\proj(\XXX)$ does. This
observation allows us in the following proposition to present an
injective version of~\cite[Remark~28]{halvorsenF}.
\end{rema}

\begin{prop}\label{suffcondforweakvan}
Assume that $R$ is complete of prime characteristic $p$ and with
perfect residue field.
A necessary and sufficient condition for $R$ to satisfy numerical
vanishing is that all element of $\grot\inj(\mmm)$ satisfy numerical
vanishing: that is, that
\[
\chi(\rherzog(Y)) = p^{\dim R}\chi(Y)
\]
for all complexes $Y\in\inj(\mmm)$. If $R$ is
Cohen--Macaulay, $\grot\inj(\mmm)$ is generated by modules, and hence
a necessary and sufficient condition for $R$ to 
satisfy numerical vanishing is that 
\[
\length \herzog(N) = p^{\dim R}\length N
\]
for all modules $N$ with finite length and finite injective
dimension.
\end{prop}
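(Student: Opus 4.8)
The plan is to prove the equivalence of the three conditions
(a) $R$ satisfies numerical vanishing;
(b) every element of $\grot\inj(\mmm)$ satisfies numerical vanishing;
(c) $\chi(\rherzog(Y))=p^{\dim R}\chi(Y)$ for every $Y\in\inj(\mmm)$;
and then, assuming $R$ Cohen--Macaulay, to cut~(c) down to modules. For (b)$\Leftrightarrow$(c): since $\codim\{\mmm\}=\dim R$ we have $\Psi_\mmm=p^{-\dim R}\herzog_\mmm$, so under the isomorphism $\grot\der(\mmm)\cong\QQ$ of~\eqref{chiisomorphism} the class $\Psi_\mmm^e([Y]_{\inj(\mmm)})$ maps to $p^{-e\dim R}\chi(\rherzog^e(Y))$. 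Using Proposition~\ref{grothendieckobs}(\ref{intheform}), continuity of $\grot\inj(\mmm)\to\grot\der(\mmm)$ and the recursive formula of Theorem~\ref{injmaintheo}, one sees that $[Y]_{\inj(\mmm)}$ satisfies numerical vanishing precisely when $\chi(Y)=\lim_{e\to\infty}p^{-e\dim R}\chi(\rherzog^e(Y))$; condition~(c) gives all these limit identities by iteration, while conversely applying such an identity to $\rherzog(Y)$ in place of $Y$ and re-indexing the limit recovers~(c).

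The implication (a)$\Rightarrow$(b) is the case $\XXX=\{\mmm\}$ of the definition, via Remark~\ref{injnumvan}. For (b)$\Rightarrow$(a) assume~(b), and first establish a key lemma: if $\YYY\subseteq\spec R$ is \emph{closed} and $\beta'\in\grot\inj(\YYY)$ is an eigenvector of $\Psi_\YYY$ of eigenvalue $p^{-i}$ with $i\geq1$, then $\overline{\beta'}=0$ in $\grot\der(\YYY)$. Indeed, by the defining relations of $\grot\der(\YYY)$ it suffices to show that $\gamma'\otimes\beta'$ has image $0$ in $\grot\der(\mmm)\cong\QQ$ for every $\gamma'\in\grot\proj(\comp\YYY)$; but $\gamma'\otimes\beta'\in\grot\inj(\mmm)$, so by~(b) its image equals that of $(\gamma'\otimes\beta')^{(0)}$, and since $\dim\YYY+\dim\comp\YYY=\dim R$ the shift in Proposition~\ref{01} is $0$, giving $(\gamma'\otimes\beta')^{(0)}=(\gamma')^{(0)}\otimes(\beta')^{(0)}=0$ because $\beta'$, being a pure eigenvector of degree $i\geq1$, has $(\beta')^{(0)}=0$. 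Now let $\XXX$ be any specialization-closed subset and $\beta\in\grot\inj(\XXX)$; by Proposition~\ref{grothendieckobs}(\ref{intheform}) write $\beta=r[Y]_{\inj(\XXX)}$ and put $\YYY=\supp Y$, a closed subset of $\XXX$, so that $\beta=\overline{\beta_0}$ with $\beta_0=r[Y]_{\inj(\YYY)}$. Setting $s=\codim\YYY-\codim\XXX\geq0$, Remark~\ref{injdecompositioninclusion} gives $\beta^{(j)}=\overline{\beta_0^{(j+s)}}$, hence $\beta-\beta^{(0)}=\overline{\sum_{k\geq s+1}\beta_0^{(k)}}$; by the key lemma the image of each $\beta_0^{(k)}$ with $k\geq1$ in $\grot\der(\YYY)$ is $0$, so the commuting square of inclusion homomorphisms $\grot\inj(\YYY)\to\grot\inj(\XXX)$, $\grot\inj(\YYY)\to\grot\der(\YYY)$, $\grot\der(\YYY)\to\grot\der(\XXX)$, $\grot\inj(\XXX)\to\grot\der(\XXX)$ forces $\beta-\beta^{(0)}$ to map to $0$ in $\grot\der(\XXX)$. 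Thus every element of every $\grot\inj(\XXX)$ satisfies numerical vanishing, and Remark~\ref{injnumvan} yields~(a).

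For the Cohen--Macaulay statement, dagger duality identifies $\grot\inj(\mmm)\cong\grot\proj(\mmm)$, carrying the class of a finite-length module $N$ to that of its Matlis dual $N^\vee=N^\dagger=\rhom_R(N,D)$ (again a finite-length module, concentrated in degree $0$), which has finite projective dimension exactly when $N$ has finite injective dimension. Granting that over a Cohen--Macaulay ring $\grot\proj(\mmm)$ is generated as a $\QQ$-vector space by classes of finite-length modules of finite projective dimension --- a d\'evissage for which Koszul complexes on systems of parameters furnish such modules --- it follows that $\grot\inj(\mmm)$ is generated by classes of finite-length modules of finite injective dimension. For such a module $N$ the dual $N^\vee$ has finite projective dimension, so $\lfrob^n(N^\vee)=\frob^n(N^\vee)$ is a module (Peskine--Szpiro), whence $\rherzog^n(N)=(\lfrob^n(N^\dagger))^\dagger=\herzog^n(N)$ is a finite-length module concentrated in degree $0$ and $\chi(\rherzog(N))=\length\herzog(N)$, $\chi(N)=\length N$. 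Since $[Y]_{\inj(\mmm)}\mapsto\chi(\rherzog(Y))-p^{\dim R}\chi(Y)$ is a $\QQ$-linear functional on $\grot\inj(\mmm)$, condition~(c) amounts to its vanishing on these generators, i.e.\ to $\length\herzog(N)=p^{\dim R}\length N$ for all finite-length modules $N$ of finite injective dimension. The step I expect to be the main obstacle is precisely this d\'evissage: one cannot simply peel off the top homology of a perfect complex $0\to F_n\to\cdots\to F_0\to0$, as that homology need not lie in $\proj(\mmm)$, so one must work through the intermediate syzygies and use the Cohen--Macaulay hypothesis to control depth --- a hypothesis genuinely needed, the statement failing for rings of depth $0$ and positive dimension.
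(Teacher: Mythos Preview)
Your argument is correct, but it takes a different route from the paper's. The paper's proof is essentially a one-liner: it invokes the analogous projective statement from \cite[Remark~28]{halvorsenF} (namely, that $R$ satisfies numerical vanishing iff every element of $\grot\proj(\mmm)$ does, iff $\chi(\lfrob(X))=p^{\dim R}\chi(X)$ for all $X\in\proj(\mmm)$, with the module reduction in the Cohen--Macaulay case) and transfers it to $\grot\inj(\mmm)$ via the dagger-duality isomorphism of Proposition~\ref{D2}, together with the observation that $(-)^\dagger$ carries modules in $\proj(\mmm)$ to modules in $\inj(\mmm)$ and back. Your approach instead reproves the content of that external remark directly in the injective setting: the equivalence (b)$\Leftrightarrow$(c) via the limit formula and the re-indexing trick, and the harder implication (b)$\Rightarrow$(a) via your ``key lemma'' using Proposition~\ref{01} on a closed support $\YYY$ (so that the shift $e$ vanishes). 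This is a genuine and clean self-contained argument; what the paper buys by citing \cite{halvorsenF} is brevity, what your argument buys is independence from that reference.

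For the Cohen--Macaulay clause, you correctly identify the d\'evissage---that $\grot\proj(\mmm)$, hence $\grot\inj(\mmm)$, is generated by module classes---as the crux, and your instinct to pass through dagger duality here matches the paper exactly. The paper does not prove this d\'evissage either; it is absorbed into the citation of \cite[Remark~28]{halvorsenF}. So your flagged ``main obstacle'' is real but is not something the paper itself resolves internally.
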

\begin{proof}
The proposition follows immediately from~\cite[Remark~28]{halvorsenF} by
applying the dagger duality isomorphism between $\grot\proj(\mmm)$ and
$\grot\inj(\mmm)$ and by noting that 
 $(-)^\dagger$ takes a module in $\proj(\mmm)$ to a module in
 $\inj(\mmm)$ and vice versa.
\end{proof}

\section{Self-duality}
  Let $\XXX$ be a specialization-closed subset of $\spec R$, and let $K$ be a Koszul
  complex in $\proj(\XXX)$ on $\codim\XXX$ elements. It is a well-know
  fact that Koszul complexes are ``self-dual'' in the sense that $K
  \simeq \shift^{\codim\XXX}K^*$. In particular, for the element 
  $\alpha=[K]_{\proj(\XXX)}$ in $\grot\proj(\XXX)$, we have
  \begin{equation*}
    \alpha= [\shift^{\codim\XXX}K^*]_{\proj(\XXX)} =
    (-1)^{\codim\XXX}[K^*]_{\proj(\XXX)}=(-1)^{\codim\XXX}\alpha^*.
  \end{equation*}
  Proposition~\ref{selfdualityforvanishing} below
  shows that this feature is displayed for all elements that satisfy
  vanishing. 

\begin{defi}
  Let $\XXX$ be a specialization-closed subset of $\spec R$ and consider an element
  $\alpha\in\grot\proj(\XXX)$ and an element
  $\beta\in\grot\inj(\XXX)$. If 
  \begin{equation*}
    \alpha = (-1)^{\codim\XXX}\alpha^*,
  \end{equation*}
  we say that $\alpha$ is \emph{self-dual}, and
  if all elements in $\grot\proj(\XXX)$ for all specialization-closed subsets $\XXX$
  of $\spec R$ are self-dual, we say that
  $R$ \emph{satisfies self-duality}. Moreover, if the above equality
  holds after an application of the inclusion homomorphism
  $\grot\proj(\XXX)\to\grot\der(\XXX)$ so that 
\[
\overline\alpha =  (-1)^{\codim\XXX}\overline{\alpha^*}
\]
 in $\grot\der(\XXX)$, we say that $\alpha$ is \emph{numerically
    self-dual}, and if all elements in $\grot\proj(\XXX)$ for all specialization-closed subsets $\XXX$
  of $\spec R$ are numerically self-dual, we say that $R$
  \emph{satisfies numerical self-duality}.

Similarly, if 
  \begin{equation*}
    \beta = (-1)^{\codim\XXX}\beta^\star,
  \end{equation*}
  we say that $\beta$ is
  \emph{self-dual}, and if the above equality
  holds after an application of the inclusion homomorphism
  $\grot\inj(\XXX)\to\grot\der(\XXX)$ so that 
\[
\overline\beta =   (-1)^{\codim\XXX}\overline{\beta^\star}
\]
 in $\grot\der(\XXX)$, we say that $\beta$ is \emph{numerically
    self-dual}.
\end{defi}

\begin{rema}
The commutativity of the star and dagger functors shows that an
element $\beta\in\grot\inj(\XXX)$ is self-dual if and only if the
corresponding element $\beta^\dagger\in\grot\proj(\XXX)$ is
self-dual. Thus, $R$ satisfies self-duality if and only if all
elements in $\grot\inj(\XXX)$ for all specialization-closed subsets $\XXX$ of $\spec
R$ are self-dual. A similar remark holds for numerical self-duality.
\end{rema}

\begin{prop}\label{D3}
  Let $\XXX$ be a specialization-closed subset of $\spec R$, let
  $\alpha\in\grot\proj(\XXX)$ and let $\beta\in\grot\inj(\XXX)$. Then,
\[
    \vandim\alpha^* = \vandim\alpha \quad\text{and}\quad
  \vandim\beta^\star = \vandim\beta .
\]
If, in addition, $R$ is complete of prime characteristic $p$ and with perfect residue
  field, we have
\[
    \Phi_\XXX(\alpha^*) =\Phi_\XXX(\alpha)^*  \quad\text{and}\quad
    \Psi_\XXX(\beta^{\star}) =\Psi_\XXX(\beta)^{\star} .
\]
In particular, for all integers $i$, we have
\[
    (\alpha^*)^{(i)} =(\alpha^{(i)})^* \quad\text{and}\quad
    (\beta^\star)^{(i)} = (\beta^{(i)})^\star.
\]
\end{prop}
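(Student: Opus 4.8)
The plan is to dispatch the three assertions in turn, each by reducing to a result already in place. For the vanishing dimensions there is nothing new to do: the equalities $\vandim\alpha^* = \vandim\alpha$ and $\vandim\beta^\star = \vandim\beta$ are precisely the vanishing-dimension formulas recorded in Remark~\ref{dimformulas}, which follow from the fact that the star functors do not change supports together with the formulas in~\eqref{chixiformulas}.

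For the identities involving $\Phi_\XXX$ and $\Psi_\XXX$ I would invoke Lemma~\ref{frobherzog}, which supplies isomorphisms of endofunctors $\lfrob^n(-)^* \simeq \lfrob^n(-^*)$ on $\proj(\XXX)$ and $\rherzog^n(-)^\star \simeq \rherzog^n(-^\star)$ on $\inj(\XXX)$. Writing $\alpha = r[X]_{\proj(\XXX)}$ and $\beta = s[Y]_{\inj(\XXX)}$ by Proposition~\ref{grothendieckobs} and passing to Grothendieck spaces, these isomorphisms become $\frob^n_\XXX(\alpha^*) = \frob^n_\XXX(\alpha)^*$ and $\herzog^n_\XXX(\beta^\star) = \herzog^n_\XXX(\beta)^\star$; dividing through by the common factor $p^{n\,\codim\XXX}$ yields $\Phi^n_\XXX(\alpha^*) = \Phi^n_\XXX(\alpha)^*$ and $\Psi^n_\XXX(\beta^\star) = \Psi^n_\XXX(\beta)^\star$ for all $n$, and the case $n = 1$ is the assertion.

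For the eigencomponent identities I would argue from the uniqueness clause of Theorem~\ref{diagonalizingthefrobenius}, exactly as in the proof of Proposition~\ref{D2}. Applying the $\QQ$--linear operator $(-)^*$ to the decomposition $\alpha = \alpha^{(0)} + \cdots + \alpha^{(u)}$ gives $\alpha^* = (\alpha^{(0)})^* + \cdots + (\alpha^{(u)})^*$, and the commutation just established shows $\Phi_\XXX((\alpha^{(i)})^*) = \Phi_\XXX(\alpha^{(i)})^* = p^{-i}(\alpha^{(i)})^*$, so each $(\alpha^{(i)})^*$ is zero or an eigenvector for $\Phi_\XXX$ with eigenvalue $p^{-i}$. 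Since $\vandim\alpha^* = \vandim\alpha \leq u$, the uniqueness of such a decomposition forces $(\alpha^*)^{(i)} = (\alpha^{(i)})^*$. The argument for $\beta^\star$ is verbatim the same, with Theorem~\ref{injmaintheo} replacing Theorem~\ref{diagonalizingthefrobenius}.

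The proof carries no real obstacle: the only points requiring a moment's care are that the functorial isomorphisms of Lemma~\ref{frobherzog} genuinely descend to the induced operators on Grothendieck spaces---which is immediate, since isomorphic complexes define the same element---and that the normalizing factor $p^{n\,\codim\XXX}$ be carried along consistently, which it is, since it appears identically on both sides and cancels.
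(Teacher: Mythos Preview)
Your proposal is correct and follows essentially the same route as the paper's proof: the vanishing-dimension equalities are taken from Remark~\ref{dimformulas}, the commutation of $\Phi_\XXX$ and $\Psi_\XXX$ with the star functors comes from Lemma~\ref{frobherzog}, and the eigencomponent identities follow by applying $(-)^*$ (resp.\ $(-)^\star$) to the decomposition and invoking the uniqueness clause of Theorem~\ref{diagonalizingthefrobenius} (resp.\ Theorem~\ref{injmaintheo}). Your version is slightly more explicit than the paper's in writing out the passage to Grothendieck spaces and in noting that $\vandim\alpha^* \leq u$ is needed for uniqueness to apply, but the substance is identical.
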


\begin{proof} 
The
formulas for vanishing dimension follow from~\eqref{chixiformulas},
since the dagger functor 
does not change the dimension of a complex. The second pair of
formulas follow immediately from the commutativity of the star and
Frobenius functors; see~\ref{endofunctors}. Thus, it follows that
\[
\Phi_\XXX(\alpha^{(i)*})=\Phi_\XXX(\alpha^{(i)})^*=p^{-i}\alpha^{(i)*}.
\]
That is to say, $\alpha^{(i)*}$ is an eigenvector for $\Phi_\XXX$ with
eigenvalue $p^{-i}$. Setting $u=\vandim\alpha$, the decomposition
\[
\alpha^*=(\alpha^{(0)}+\cdots +\alpha^{(u)})^*= \alpha^{(0)*}+\cdots +\alpha^{(u)*}
\]
now shows that $\alpha^{*(i)}=\alpha^{(i)*}$. A similar argument applies
for $\beta$.
\end{proof}

\begin{prop}\label{selfdualityforvanishing}
Let $\XXX$ be a specialization-closed subset of $\spec R$. If an
element $\alpha\in\grot\proj(\XXX)$ satisfies vanishing, then $\alpha$
is self-dual, and if an element $\beta\in\grot\inj(\XXX)$ satisfies
vanishing, then $\beta$ is self-dual. 
Moreover, $R$ 
  satisfies vanishing if and only if $R$ satisfies self-duality, and if
  $R$ satisfies numerical self-duality, then $R$ satisfies weak vanishing.
\end{prop}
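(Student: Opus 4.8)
The plan is to prove the statement first for a single element $\alpha\in\grot\proj(\XXX)$, deduce the $\grot\inj(\XXX)$ version by dagger duality, and then obtain the two assertions about $R$ itself from a short formal argument comparing self-duality over $\XXX$ with self-duality over a specialization-closed set of one larger dimension. Throughout I would first reduce, exactly as in the proof of Proposition~\ref{inducedhomomorphisms}, to the case where $R$ is complete, so that a normalized dualizing complex $D$ is available.

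So let $\alpha\in\grot\proj(\XXX)$ satisfy vanishing and put $c=\codim\XXX$. Equality in $\grot\proj(\XXX)$ is detected by the metafunctions $\chi(-,Z)$ on $\der(\comp\XXX)$, and Proposition~\ref{formulas} gives $\alpha^{*}\otimes\gamma=\alpha\otimes\gamma^{\dagger}$; so $\alpha=(-1)^{c}\alpha^{*}$ is equivalent to $\alpha\otimes\gamma=(-1)^{c}\,(\alpha\otimes\gamma^{\dagger})$ for all $\gamma\in\grot\der(\comp\XXX)$, and by Proposition~\ref{grothendieckobs}\eqref{generatedg} it suffices to verify this for $\gamma=[R/\ppp]_{\der(\comp\XXX)}$ with $\ppp\in\comp\XXX$, so that $\dim R/\ppp\leq c$. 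If $\dim R/\ppp<c$ both sides vanish, since $\vandim\alpha\leq 0$ and dagger preserves supports. The only substantial point is the case $\dim R/\ppp=c$, for which I would establish the lemma that in $\grot\der(\comp\XXX)$
\[
[(R/\ppp)^{\dagger}]=(-1)^{c}\,[R/\ppp]+\eta,\qquad \dim\eta<c .
\]
Indeed $[(R/\ppp)^{\dagger}]=\sum_{i}(-1)^{i}[\H_{i}((R/\ppp)^{\dagger})]$ by Proposition~\ref{grothendieckobs}\eqref{generatedg}, each homology module is supported on $\var(\ppp)$ whose unique prime of dimension $c$ is $\ppp$, and filtering shows that the coefficient of $[R/\ppp]$ is $\chi\bigl((R/\ppp)^{\dagger}_{\ppp}\bigr)=\chi\bigl(\rhom_{R_{\ppp}}(\kappa(\ppp),D_{\ppp})\bigr)$, where $\kappa(\ppp)$ is the residue field of $R_{\ppp}$; and this equals $(-1)^{c}$ because for the normalized dualizing complex $D$ the injective hull $E_{R}(R/\ppp)$ occurs exactly once, in homological degree $\dim R/\ppp=c$ (Section~\ref{dualizing}), so that $\rhom_{R_{\ppp}}(\kappa(\ppp),D_{\ppp})\simeq\shift^{c}\kappa(\ppp)$. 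Substituting the lemma and using $\vandim\alpha\leq 0$ to annihilate $\alpha\otimes\eta$ finishes the case, hence the single-element statement. For $\beta\in\grot\inj(\XXX)$ satisfying vanishing one applies this to $\beta^{\dagger}\in\grot\proj(\XXX)$: by Remark~\ref{dimformulas} dagger preserves vanishing dimension, and from $(-)^{\star}\simeq(-)^{\dagger*\dagger}$ (Section~\ref{star}) the relation $\beta^{\dagger}=(-1)^{c}(\beta^{\dagger})^{*}$ gives $\beta=(-1)^{c}\beta^{\star}$.

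The equivalence ``$R$ satisfies vanishing $\iff R$ satisfies self-duality'' then has an immediate forward direction (the single-element statement together with Remark~\ref{vanexam}). For the converse, and for the final assertion, I would use the following device. Let $\XXX\subseteq\XXX'$ with $\codim\XXX'=\codim\XXX-1$ and let $\alpha\in\grot\proj(\XXX)$. Since $(-)^{*}$ commutes with the inclusion homomorphism, self-duality of $\alpha$ pushes forward to $\overline{\alpha}=(-1)^{\codim\XXX}(\overline{\alpha})^{*}$ in $\grot\proj(\XXX')$, whereas self-duality of the element $\overline{\alpha}$ itself reads $\overline{\alpha}=(-1)^{\codim\XXX-1}(\overline{\alpha})^{*}=-(-1)^{\codim\XXX}(\overline{\alpha})^{*}$; adding these forces $2\overline{\alpha}=0$, hence $\overline{\alpha}=0$ in $\grot\proj(\XXX')$. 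As $\alpha$ and the pair $\XXX\subseteq\XXX'$ were arbitrary, the projective counterpart of Proposition~\ref{injvandim0} (see~\cite[Propositions~23 and~24]{halvorsenF}) yields $\vandim\alpha\leq 0$ throughout, i.e.\ $R$ satisfies vanishing. Running the identical argument in $\grot\der(\XXX')$ in place of $\grot\proj(\XXX')$, with numerical self-duality of $\alpha$ and of $\overline{\alpha}$ replacing self-duality, shows that the image of $\alpha$ in $\grot\der(\XXX')$ is zero for every $\XXX'$ with $\codim\XXX'=\codim\XXX-1$; by Proposition~\ref{vanconditions}\eqref{A3} this is precisely weak vanishing of $\alpha$, so numerical self-duality of $R$ implies weak vanishing of $R$.

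I expect the only real obstacle to be the lemma identifying $[(R/\ppp)^{\dagger}]$ with $(-1)^{\dim R/\ppp}[R/\ppp]$ modulo lower-dimensional classes; once this is in hand, everything else is formal manipulation of the induced bi-homomorphisms, the inclusion homomorphisms, and the vanishing-dimension criteria already recorded above (together with their projective analogues imported from~\cite{halvorsenF}). Minor care is also needed in the completion reduction and in checking that the star functors are compatible with all the inclusion maps involved, but these are routine.
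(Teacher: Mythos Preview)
Your proof is correct and follows the same overall architecture as the paper's: reduce to testing against $[R/\ppp]$ with $\dim R/\ppp=\codim\XXX$, handle that case by a computation with the normalized dualizing complex, transfer to $\grot\inj(\XXX)$ by dagger, and run the sign-change argument over an $\XXX'$ with $\codim\XXX'=\codim\XXX-1$ for the converse and the numerical statement. The last two parts are essentially identical to the paper's.

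The substantive difference is in how the key case $\dim R/\ppp=c$ is dispatched. You stay over $R$ and use Proposition~\ref{formulas} to rewrite $\alpha^{*}\otimes\gamma$ as $\alpha\otimes\gamma^{\dagger}$, then prove the lemma $[(R/\ppp)^{\dagger}]=(-1)^{c}[R/\ppp]+\eta$ with $\dim\eta<c$ by localizing at $\ppp$ and reading off the single copy of $E_{R}(R/\ppp)$ in degree $c$ of $D$. The paper instead performs a change of rings: it computes $X^{*}\ltensor_{R}R/\ppp\simeq (X\ltensor_{R}R/\ppp)^{*_{R/\ppp}}$ via (Tensor-eval) and (Adjoint), checks that vanishing survives the passage to $R/\ppp$, and thereby reduces to the statement $\chi(X^{*})=(-1)^{\dim R}\chi(X)$ over a complete local \emph{domain}; there it compares $\H_{d}(D)$ with $R$ through a map with lower-dimensional kernel and cokernel, using $D_{d}=E(R)=R_{(0)}$. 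Your route is more economical---no change of rings and no need to re-verify vanishing after base change---while the paper's reduction isolates a clean statement over a domain that may be of independent interest. Both arguments are ultimately powered by the same structural fact about $D$ recorded in Section~\ref{dualizing}.
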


\begin{proof}
We shall prove that, if $\alpha$ satisfies vanishing, then $\alpha$ is
self-dual. The corresponding statement for $\beta$ follows from dagger
duality, since $\beta$ is self-dual exactly when $\beta^\dagger$ is
and satisfies vanishing exactly when $\beta^\dagger$ does. 

  By Proposition~\ref{grothendieckobs}, it suffices to assume that
  $\alpha=[X]_{\proj(\XXX)}$ for a complex $X$ from $\proj(\XXX)$. We
  are required to establish the identity
  \begin{equation}\label{goal}
    \chi(X^*\ltensor_R -) = (-1)^{\codim\XXX} \chi(X\ltensor_R -)
  \end{equation}
  viewed as metafunctions on $\der(\comp{\XXX})$. First, we translate this
  question into showing that, if $R$ is a domain and $\XXX$
  equals $\mmm$, then
  \begin{equation*}
     \chi(X^*)=(-1)^{\dim R}\chi(X)
  \end{equation*}
for all complexes $X$ in $\proj(\mmm)$ such that $[X]_{\proj(\mmm)}$
satisfies vanishing.

  \medskip
  \noindent
  $\mathbf{1^{\circ}}$ By assumption, $\alpha$ satisfies vanishing, and
  Proposition~\ref{D3} implies that so does $\alpha^*$.
  From
  Proposition~\ref{grothendieckobs} we see that, in order to show \eqref{goal},
  it suffices to test with modules of the form $R/\ppp$ for prime
  ideals $\ppp$ from $\comp{\XXX}$ with $\dim R/\ppp = \codim \XXX$.
  Consider the following computation.
  \begin{align*}
    X^*\ltensor_R R/\ppp &= \rhom_R(X,R)\ltensor_R R/\ppp \\
    &\simeq \rhom_R(X,R/\ppp) \\
    &\simeq \rhom_R(X,\rhom_{R/\ppp}(R/\ppp,R/\ppp)) \\
    &\simeq \rhom_{R/\ppp}(X\ltensor_R R/\ppp,R/\ppp).
  \end{align*}
  Here, the first isomorphism follows from (Tensor-eval); the
  second is trivial; and the third is due to (Adjoint). To
  keep notation simple, let 
  \begin{equation*}
    (-)^{*_{R/\ppp}} = \rhom_{R/\ppp}(-,R/\ppp).
  \end{equation*} 
  We are required to demonstrate that
  \begin{equation*}
    \chi(X^*\ltensor_R R/\ppp) = (-1)^{\dim R/\ppp}\chi(X\ltensor_R R/\ppp),
  \end{equation*}
and since the Euler characteristics $\chi^R$ and $\chi^{R/\ppp}$ are
identical on all finite $R/\ppp$--complexes with finite length homology,
the computations above imply that we need to demonstrate that
\[
    \chi^{R/\ppp}((X\ltensor_R R/\ppp)^{*_{R/\ppp}}) =
    (-1)^{\dim R/\ppp}\chi^{R/\ppp}(X\ltensor_R R/\ppp).
\]
Having changed rings
  from $R$ to the domain $R/\ppp$, we need to verify that the element 
  $[X\ltensor_R R/\ppp]_{\proj(\mmm/\ppp)}$  in the Grothendieck space
    $\grot\proj(\mmm/\ppp)$ over $R/\ppp$ satisfies vanishing. But
    this follows from  the fact that $\alpha=[X]_{\proj(\XXX)}$ satisfies vanishing, since
  \begin{equation*}
   \chi^{R/\ppp}((X\ltensor_R R/\ppp)\ltensor_{R/\ppp}R/\aaa) =
   \chi^R(X\ltensor_R R/\aaa)= 0.
  \end{equation*}
  for all ideals $\aaa \in \var(\ppp)$ with $\dim R/\aaa<\dim R/\ppp=\codim\XXX$.
  Thus, it suffices to show that
  \begin{equation*}
   \chi(X^*)=(-1)^{\dim R}\chi(X)
  \end{equation*}
  when $R$ is a domain, $\XXX$ equals $\{\mmm\}$ and
  $[X]_{\proj(\mmm)}$ satisfies vanishing.

  \medskip
  \noindent
  $\mathbf{2^{\circ}}$ Without loss of generality, we may assume that $R$ is
  complete; in particular, we may assume that $R$ admits a normalized
  dualizing complex $D$. Letting $Y = R$ in~\eqref{chixiformulas}
  and applying Proposition~\ref{grothendieckobs}, it follows that
  \begin{equation*}
   \chi(X^*)=\chi(X^*,R)=\chi(X, D) = \chi(X, \H(D)).
  \end{equation*}
  According to~\ref{dualizing} we may assume that the modules
  in the dualizing complex $D$ have the form
  \begin{equation}\label{dualmod}
    D_i =  \!\!\!\!\!\bigoplus_{\dim R/\ppp = i} \!\!\!\!\! E_R(R/\ppp).
  \end{equation}
  Let
  $d=\dim R$ and observe that, since $[X]_{\proj(\mmm)}$ satisfies vanishing and
  \begin{equation*}
    \dim \H_i(D)\leq \dim D_i < d \quad\text{for all}\quad i<d,
  \end{equation*} 
  it follows that
  \begin{equation*}
   \chi^R(X^*) =\chi^R(X, \shift^{d} \H_{d}(D))
               = (-1)^{d} \chi^R(X, \H_{d}(D)).
  \end{equation*}
  Since $\H_d(D)$ is a submodule of $D_d$, there is a short exact sequence
  \begin{equation}\label{seq}
    0\to \H_{d}(D)\to D_d\to Q\to 0,
  \end{equation}
  where $Q$ is a submodule of $D_{d-1}$, so that $\dim Q\leq\dim D_{d-1}\leq d-1$,
  where the last inequality follows from \eqref{dualmod}. Since $R$ is
  assumed to be a domain,
\[
D_d=E(R)=R_{(0)},
\]
so localizing the short exact sequence \eqref{seq} at the prime ideal $(0)$, we obtain an 
  isomorphism
  \begin{equation*}
    \H_{d}(D)_{(0)}\stackrel{\cong}{\longrightarrow} R_{(0)}.
  \end{equation*}
This lifts to an $R$--homomorphism, producing an exact sequence of finitely
  generated $R$--modules
  \begin{equation*}
    0 \to K \to \H_{d}(D) \to R \to C \to 0,
  \end{equation*}
  where $K$ and $C$ are not supported at the prime ideal $(0)$. Thus,
  $\dim K$ and $\dim C$ are strictly smaller than $\dim R$. 
Consequently, since $[X]_{\proj(\XXX)}$ satisfies vanishing and the
 intersection multiplicity is additive on short exact sequences,
  \begin{align*}
   \chi(X^*) &= (-1)^d\chi(X, \H_d(D))\\ 
             &= (-1)^d(\chi(X, K) + \chi(X, R) -
                         \chi(X, C))\\ 
             &= (-1)^d\chi(X),
  \end{align*}
  which concludes the argument.

\medskip
\noindent
$\mathbf{3^{\circ}}$ 
We have now shown that, if $R$ satisfies vanishing, then $R$ satisfies
self-duality. To see the other implication, assume that $R$ satisfies
self-duality and let $\alpha$ be an element of $\grot\proj(\XXX)$ for
some specialization-closed subset $\XXX$ of $\spec R$. For any specialization-closed subset $\XXX'$
of $\spec R$ with $\XXX\subseteq\XXX'$ and $\codim\XXX'=\codim\XXX-1$,
we now have, for the image $\overline\alpha$ of $\alpha$ in $\grot\proj(\XXX')$,  that 
\[
(-1)^{\codim\XXX'}\overline\alpha^* =
\overline\alpha=\overline{(-1)^{\codim\XXX}\alpha^*}=
(-1)^{\codim\XXX}\overline\alpha^* ,
\]
which means that $\overline\alpha=0$. Thus, by
Proposition~\ref{injvandim0}, $\alpha$ satisfies vanishing, and
since $\alpha$ was arbitrary, $R$ must satisfy vanishing. Considering instead the image $\overline\alpha$
of $\alpha$ in $\grot\der(\XXX')$ and applying
Proposition~\ref{vanconditions}, the same argument shows that, if 
$\alpha$ is numerically self-dual, then $\alpha$ satisfies weak
vanishing. Thus, if $R$ satisfies numerical self-duality, then $R$
satisfies weak vanishing.
\end{proof}

\begin{theo}\label{dualitytheo}
  Assume that $R$ is complete of prime characteristic $p$ and with
  perfect residue field.   Let $\XXX$ be a specialization-closed
  subset of $\spec R$, let $\alpha\in\grot\proj(\XXX)$ and let
  $\beta\in\grot\inj(\XXX)$. Then, for all non-negative integers $i$  
  \begin{equation} \label{decostarformula}
   (\alpha^*)^{(i)}=(-1)^{i+\codim\XXX}\alpha^{(i)}
   \quad\text{and}\quad
   (\beta^\star)^{(i)}=(-1)^{i+\codim\XXX}\beta^{(i)} .
  \end{equation}
Consequently, if $u$ is the vanishing dimension of $\alpha$, then
  \begin{equation*}
    (-1)^{\codim\XXX}\alpha^*=\alpha^{(0)}-\alpha^{(1)}+\alpha^{(2)}-\cdots
    +(-1)^u\alpha^{(u)},
  \end{equation*}
and if $v$ is the vanishing dimension of $\beta$, then
  \begin{equation*}
    (-1)^{\codim\XXX}\beta^\star = \beta^{(0)}-\beta^{(1)}+\beta^{(2)}-\cdots
    +(-1)^v\beta^{(v)}.
  \end{equation*}
\end{theo}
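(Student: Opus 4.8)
The plan is to deduce the statement from the self-duality-for-vanishing result, Proposition~\ref{selfdualityforvanishing}, by working componentwise in the eigenspace decomposition of $\Phi_\XXX$. Proposition~\ref{D3} already gives $(\alpha^*)^{(i)}=(\alpha^{(i)})^*$ and $(\beta^\star)^{(i)}=(\beta^{(i)})^\star$, so \eqref{decostarformula} is equivalent to the pair of identities
\[
(\alpha^{(i)})^*=(-1)^{i+\codim\XXX}\alpha^{(i)}\qquad\text{and}\qquad(\beta^{(i)})^\star=(-1)^{i+\codim\XXX}\beta^{(i)} .
\]
Since the star and dagger functors commute (see~\ref{star}) and $(-)^{(i)}=(-)^{\dagger(i)\dagger}$ by Proposition~\ref{D2}, the identity for $\beta$ follows formally from the one for $\alpha$ applied to $\beta^\dagger\in\grot\proj(\XXX)$, so I would concentrate on $\alpha$. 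One may also assume $\alpha^{(i)}\neq 0$, so that $0\leq i\leq\vandim\alpha$; otherwise both sides are zero.

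The key step is a codimension-reduction argument. Fix such an $i$ and choose, using Proposition~\ref{vanconditions}\eqref{A2}, a specialization-closed $\XXX'$ with $\XXX\subseteq\XXX'$ and $\codim\XXX'=\codim\XXX-i$; write $\overline{(-)}$ for the inclusion homomorphism $\grot\proj(\XXX)\to\grot\proj(\XXX')$. Because $\alpha^{(i)}$ is an eigenvector of $\Phi_\XXX$ with eigenvalue $p^{-i}$, its only nonzero component is in degree $i$, so the projective analogue of Remark~\ref{injdecompositioninclusion} (that is,~\cite[Remark~20]{halvorsenF}) applied with $s=i$ shows that $\overline{\alpha^{(i)}}$ has only a degree-zero component; equivalently, $\overline{\alpha^{(i)}}$ satisfies vanishing in $\grot\proj(\XXX')$. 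By Proposition~\ref{selfdualityforvanishing} it is therefore self-dual, $\overline{\alpha^{(i)}}=(-1)^{\codim\XXX'}(\overline{\alpha^{(i)}})^*$. Since the star functor commutes with the inclusion homomorphism and $(-1)^{\codim\XXX'}=(-1)^{\codim\XXX-i}=(-1)^{i+\codim\XXX}$, the element
\[
\gamma=\alpha^{(i)}-(-1)^{i+\codim\XXX}(\alpha^{(i)})^*\in\grot\proj(\XXX)
\]
satisfies $\overline\gamma=0$ in $\grot\proj(\XXX')$, and the same argument applies to every admissible $\XXX'$.

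The step I expect to be the main obstacle is the descent back to $\grot\proj(\XXX)$, since the inclusion homomorphisms have nontrivial kernels. Here I would use the eigenspace structure to pin $\gamma$ down. By Proposition~\ref{D3}, $(\alpha^{(i)})^*$ is again a $p^{-i}$-eigenvector of $\Phi_\XXX$, so $\gamma$ lies in the $p^{-i}$-eigenspace; hence $\gamma$ is either $0$ or an eigenvector of $\Phi_\XXX$ with eigenvalue $p^{-i}$, in which case its unique decomposition in the sense of Theorem~\ref{diagonalizingthefrobenius} reduces to $\gamma=\gamma^{(i)}$ with $\vandim\gamma=i$. On the other hand, $\overline\gamma=0$ in $\grot\proj(\XXX')$ for every specialization-closed $\XXX'$ with $\codim\XXX'=\codim\XXX-i$ forces $\vandim\gamma\leq i-1$, by the projective analogue of Proposition~\ref{injvandimv} (that is,~\cite[Proposition~24]{halvorsenF}). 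These two conclusions are incompatible, so $\gamma=0$, which is precisely $(\alpha^{(i)})^*=(-1)^{i+\codim\XXX}\alpha^{(i)}$.

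Finally, to obtain the alternating-sum formulas I would sum the component identities. Writing $u=\vandim\alpha$ and $v=\vandim\beta$, Proposition~\ref{D3} gives $\vandim\alpha^*=u$ and $\vandim\beta^\star=v$, so Theorems~\ref{diagonalizingthefrobenius} and~\ref{injmaintheo} yield $\alpha^*=\sum_{i=0}^{u}(-1)^{i+\codim\XXX}\alpha^{(i)}$ and $\beta^\star=\sum_{i=0}^{v}(-1)^{i+\codim\XXX}\beta^{(i)}$; multiplying through by $(-1)^{\codim\XXX}$ produces the two displayed expressions.
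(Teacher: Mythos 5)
Your proof is correct, and it reaches the conclusion by a genuinely non-inductive route. The paper argues by induction on $i$: it forms $\sigma=(\alpha^*)^{(i)}-(-1)^{\codim\XXX+i}\alpha^{(i)}$, observes that $\sigma$ is a $p^{-i}$-eigenvector of $\Phi_\XXX$, passes to a single $\XXX'$ with $\codim\XXX'=\codim\XXX-1$, and uses the induction hypothesis (via the degree shift $\overline{(-)^{(i)}}=\overline{(-)}^{(i-1)}$) to get $\overline\sigma=0$, whence $\sigma=\sigma^{(0)}=\sigma^{(i)}=0$. You instead drop the codimension by $i$ in one step: since $\alpha^{(i)}$ is concentrated in degree $i$, its image in any $\XXX'$ with $\codim\XXX'=\codim\XXX-i$ is concentrated in degree $0$, hence satisfies vanishing and is self-dual there by Proposition~\ref{selfdualityforvanishing}; then the characterization of $\vandim\leq i-1$ in terms of images in codimension $\codim\XXX-i$ (the projective analogue of Proposition~\ref{injvandimv}) kills the $p^{-i}$-eigenvector $\gamma$. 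Both arguments rest on the same inputs --- Proposition~\ref{D3}, Proposition~\ref{selfdualityforvanishing}, and the behaviour of eigencomponents under inclusion homomorphisms --- but yours trades the induction for the vanishing-dimension characterization, which makes each degree $i$ self-contained; your reduction of the $\beta$-identity to the $\alpha$-identity via $(-)^\star\simeq(-)^{\dagger*\dagger}$ and Proposition~\ref{D2} is also cleaner than the paper's ``the proof is similar.'' One cosmetic remark: you do not actually need Proposition~\ref{vanconditions}\eqref{A2} to produce an $\XXX'$ of the right codimension, since the conclusion $\vandim\gamma\leq i-1$ only requires $\overline\gamma=0$ for \emph{all} such $\XXX'$, a condition that is vacuously satisfied if none exist.
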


\begin{proof}
The last two statements of the proposition are immediate consequences
of~\eqref{decostarformula}. We shall prove the formula for $\alpha$
in~\eqref{decostarformula}; the proof of the formula for $\beta$ is similar.

The proof is by induction on $i$. For $i = 0$, since $\alpha^{(0)}$
satisfies vanishing, the statement
  follows from Propositions~\ref{D3} and~\ref{selfdualityforvanishing}, since
  \begin{equation*}
   (\alpha^*)^{(0)}= (\alpha^{(0)})^* = (-1)^{\codim\XXX} \alpha^{(0)}.
  \end{equation*}
  Next, assume that $i>0$ and that the statement holds for smaller
  values of $i$. Choose an arbitrary specialization-closed subset $\XXX'$ of $\spec R$
  such that $\XXX\subseteq \XXX'$ and $\codim \XXX'=\codim \XXX-1$, and
  consider the element
  \begin{equation*}
   \sigma=(\alpha^*)^{(i)}-(-1)^{\codim\XXX+i}\alpha^{(i)}.
  \end{equation*}
We want to show that $\sigma=0$. Applying the automorphism
$\Phi_\XXX$, we get by Proposition~\ref{D3} that
  \begin{align*}
    \Phi_\XXX(\sigma) 
    &= \Phi_{\XXX}((\alpha^*)^{(i)})-(-1)^{\codim\XXX+i}\Phi_\XXX(\alpha^{(i)}) \\
    &= p^{-i}((\alpha^*)^{(i)}-(-1)^{\codim\XXX+i}\alpha^{(i)}) =
    p^{-i}\sigma ,
  \end{align*}
  showing that $\sigma$ is an eigenvector for $\Phi_\XXX$ with eigenvalue $p^{-i}$; in
  particular, we have $\sigma = \sigma^{(i)}$.  Denote by $\bar\sigma$
  the image of $\sigma$ in $\grot\proj(\XXX')$.  Then,
  by~\cite[Remark~20]{halvorsenF} (which corresponds to
  Remark~\ref{injdecompositioninclusion} but for elements of
  $\grot(\XXX)$) and the induction hypothesis we
  obtain 
  \begin{align*}
    \overline\sigma
    &= \overline{(\alpha^*)^{(i)}}-(-1)^{\codim\XXX+i}\overline{\alpha^{(i)}} \\
    &=
    {\overline{\alpha}^*}^{(i-1)}-(-1)^{\codim\XXX'+(i-1)}\overline\alpha^{(i-1)}
    = 0 .
  \end{align*}
  Consequently, by~\cite[Proposition~23]{halvorsenF} (which
  corresponds to Proposition~\ref{injvandim0} but for elements of $\grot\proj(\XXX)$), $\sigma$
  must satisfy vanishing: that is, $\sigma=\sigma^{(0)}$. But then
  $\sigma^{(i)} = \sigma = \sigma^{(0)}$ forcing $\sigma=0$.
\end{proof}

\begin{rema} \label{selfdualdecomposition}
  Assume that $R$ is complete of prime characteristic $p$ and with
  perfect residue field.  Let $\XXX$ be a specialization-closed subset of $\spec R$ and consider an element
  $\alpha\in\grot\proj(\XXX)$. In view of
  Theorem~\ref{diagonalizingthefrobenius} we may decompose $\alpha$
  into eigenvectors
  \begin{equation*}
     \alpha =\alpha^{(0)} + \alpha^{(1)} + \alpha^{(2)} +\cdots +\alpha^{(u)}
  \end{equation*}
  where $u$ is the vanishing dimension of $\alpha$. Comparing it with the
  decomposition of $\alpha^*$ from Theorem~\ref{dualitytheo}
  \begin{equation*}
     (-1)^{\codim\XXX}\alpha^*=\alpha^{(0)}-\alpha^{(1)}+\alpha^{(2)}-\cdots
     +(-1)^u\alpha^{(u)}
  \end{equation*}
  shows that $\alpha$ is self-dual if and only if $\alpha^{(i)}=0$ in
  $\grot\proj(\XXX)$ for all odd $i$: that is, if
  and only if
\[
\alpha=\alpha^{(0)}+\alpha^{(2)}+\cdots
\]
in $\grot\proj(\XXX)$. Similarly, $\alpha$ is numerically self-dual if
and only if $\overline{\alpha^{(i)}}=0$ in $\grot\der(\XXX)$ for all
odd $i$: that is, if and only if
\[
\overline\alpha=\overline{\alpha^{(0)}}+\overline{\alpha^{(2)}}+\cdots
\]
in $\grot\der(\XXX)$. Similar considerations apply for
elements $\beta\in\grot\inj(\XXX)$.
\end{rema}

In Proposition~\ref{selfdualityforvanishing}, we proved that vanishing
and self-duality are equivalent for $R$ and that numerical
self-duality implies weak vanishing. The following proposition shows
that, in characteristic $p$, numerical vanishing logically  lies
between self-duality and numerical self-duality.

\begin{prop}\label{Rconditions} 
Assume that $R$ is complete of prime characteristic $p$ and with
perfect residue field. For the following conditions, each condition implies the
  next.   In fact, \eqref{Rvan} and~\eqref{Rselfdual} are equivalent.
  \begin{enumerate}
    \item \label{Rvan} $R$ satisfies vanishing.
    \item \label{Rselfdual} $R$ satisfies self-duality.
    \item \label{Rnumvan} $R$ satisfies numerical vanishing.
    \item \label{Rnumselfdual} $R$ satisfies numerical self-duality.
    \item \label{Rweakvan} $R$ satisfies weak vanishing.
  \end{enumerate}
\end{prop}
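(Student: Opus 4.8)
The plan is to verify the chain of implications, leaning on the fact that most of it is already in hand. The equivalence of~\eqref{Rvan} and~\eqref{Rselfdual}, as well as the implication \eqref{Rnumselfdual}$\Rightarrow$\eqref{Rweakvan}, is precisely the content of Proposition~\ref{selfdualityforvanishing}; so the only genuinely new steps are \eqref{Rselfdual}$\Rightarrow$\eqref{Rnumvan} and \eqref{Rnumvan}$\Rightarrow$\eqref{Rnumselfdual}.

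For \eqref{Rselfdual}$\Rightarrow$\eqref{Rnumvan}, I would first invoke the equivalence \eqref{Rvan}$\Leftrightarrow$\eqref{Rselfdual} to assume instead that $R$ satisfies vanishing. Then, for any specialization-closed subset $\XXX$ of $\spec R$ and any $\alpha\in\grot\proj(\XXX)$, the element $\alpha$ satisfies vanishing by Remark~\ref{vanexam}, so $\vandim\alpha\leq 0$. Applying Theorem~\ref{diagonalizingthefrobenius} with $u=0$ forces $\alpha=\alpha^{(0)}$ already in $\grot\proj(\XXX)$, hence a fortiori $\overline\alpha=\overline{\alpha^{(0)}}$ in $\grot\der(\XXX)$ after applying the inclusion homomorphism; that is, $\alpha$ satisfies numerical vanishing. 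Since $\XXX$ and $\alpha$ were arbitrary, $R$ satisfies numerical vanishing.

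The step that actually requires an idea is \eqref{Rnumvan}$\Rightarrow$\eqref{Rnumselfdual}, and this is where I expect the main (if modest) obstacle to lie. Fix $\XXX$ and $\alpha\in\grot\proj(\XXX)$ and decompose $\alpha=\alpha^{(0)}+\cdots+\alpha^{(u)}$ as in Theorem~\ref{diagonalizingthefrobenius}. Applied directly to $\alpha$, the numerical-vanishing hypothesis only yields $\overline{\alpha^{(1)}}+\cdots+\overline{\alpha^{(u)}}=0$ in $\grot\der(\XXX)$, which is too weak to control the odd-degree part alone. The remedy is to test the hypothesis against each eigencomponent separately: since $\alpha^{(i)}$ is an eigenvector for $\Phi_\XXX$ with eigenvalue $p^{-i}$, the limit formula $\alpha^{(0)}=\lim_{e\to\infty}\Phi_\XXX^e(\alpha)$ of Theorem~\ref{diagonalizingthefrobenius}, applied to $\alpha^{(i)}$ in place of $\alpha$, gives $(\alpha^{(i)})^{(0)}=\lim_{e\to\infty}p^{-ie}\alpha^{(i)}=0$ for every $i\geq 1$ (the limit being taken in the topology of $\grot\proj(\XXX)$, where scalar multiplication is continuous). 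Numerical vanishing applied to the element $\alpha^{(i)}$ thus reads $\overline{\alpha^{(i)}}=\overline{(\alpha^{(i)})^{(0)}}=0$ for every $i\geq 1$. Combining this with the formula $(-1)^{\codim\XXX}\alpha^*=\alpha^{(0)}-\alpha^{(1)}+\cdots+(-1)^u\alpha^{(u)}$ of Theorem~\ref{dualitytheo}, pushed forward to $\grot\der(\XXX)$, gives $(-1)^{\codim\XXX}\overline{\alpha^*}=\overline{\alpha^{(0)}}=\overline\alpha$, i.e. $\alpha$ is numerically self-dual; one could equally quote the criterion of Remark~\ref{selfdualdecomposition} that $\overline{\alpha^{(i)}}=0$ for all odd $i$. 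Hence $R$ satisfies numerical self-duality, and the chain is complete. Throughout, the only bookkeeping to be careful about is that all eigenvector identities and the star-decomposition formula are to be derived in $\grot\proj(\XXX)$ and only then transported along the inclusion homomorphism $\grot\proj(\XXX)\to\grot\der(\XXX)$.
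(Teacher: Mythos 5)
Your proof is correct and follows essentially the same route as the paper: Proposition~\ref{selfdualityforvanishing} for the equivalence of \eqref{Rvan} and \eqref{Rselfdual} and for \eqref{Rnumselfdual}$\Rightarrow$\eqref{Rweakvan}, and Remark~\ref{selfdualdecomposition} for \eqref{Rnumvan}$\Rightarrow$\eqref{Rnumselfdual}, where your observation that numerical vanishing must be applied to each eigencomponent $\alpha^{(i)}$ separately (using $(\alpha^{(i)})^{(0)}=0$ for $i\geq 1$) is exactly the point the paper leaves implicit. The only difference is that for \eqref{Rvan}$\Rightarrow$\eqref{Rnumvan} the paper cites \cite[Proposition~27]{halvorsenF}, whereas you give the short direct argument that vanishing forces $\alpha=\alpha^{(0)}$ already in $\grot\proj(\XXX)$; both are fine.
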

\begin{proof}
 The equivalence of \eqref{Rvan} and \eqref{Rselfdual} and the fact that \eqref{Rnumselfdual} implies
 \eqref{Rweakvan} is contained in
 Proposition~\ref{selfdualityforvanishing}. The fact that \eqref{Rvan}
 implies \eqref{Rnumvan} is contained in~\cite[Proposition~27]{halvorsenF},
 and Remark~\ref{selfdualdecomposition} makes it clear that \eqref{Rnumvan}
 implies \eqref{Rnumselfdual}. 
\end{proof}

\begin{rema}
  The constructions by Miller and Singh~\cite{millersingh} shows that there
  can exist elements satisfying self-duality but not vanishing as well as
  elements satisfying numerical self-duality but not numerical vanishing;
  see~\cite[Example~35]{halvorsenF} for further details on this example.
  Roberts~\cite{robertsI} has shown the existence of a ring satisfying
  weak vanishing but not numerical self-duality;
  see~\cite[Example~32]{halvorsenF} for further details. Thus, all the
  implications except the equivalence in the preceding proposition are strict.
\end{rema}

\begin{prop} \label{chixivanishing}
  $R$ satisfies vanishing precisely when 
  \begin{equation}\label{tensorhom}
    \alpha\otimes\gamma = (-1)^{\codim \XXX}\hom(\alpha,\gamma)
  \end{equation}
  in $\grot\der(\mmm)$ for all specialization-closed subsets $\XXX$ of $\spec R$, all
  $\alpha\in\grot\proj(\XXX)$ and all $\gamma\in\grot\der(\comp\XXX)$,
  and $R$ satisfies numerical 
  self-duality  precisely when~\eqref{tensorhom} holds in
  $\grot\der(\mmm)$ when requiring $\gamma\in\grot\proj(\comp\XXX)$
  instead. In other words, $R$ satisfies vanishing 
  precisely when the intersection multiplicity and the Euler form 
  satisfy the identity
  \begin{equation}\label{chixi}
     \chi(X,Y)=(-1)^{\codim(\supp X)}\xi(X,Y)
  \end{equation}
  for all complexes $X\in\proj(R)$ and $Y\in\der(R)$ with
\[
\supp X\cap\supp Y=\{\mmm\}\quad\text{and}\quad \dim(\supp
X)+\dim(\supp Y)\leq \dim R,
\]
and $R$ satisfies numerical self-duality precisely
  when~\eqref{chixi} holds when restricting to
  complexes $Y\in\proj(R)$.
\end{prop}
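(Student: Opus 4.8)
The plan is to deduce the proposition from the identity $\hom(\alpha,\gamma)=\alpha^{*}\otimes\gamma$ supplied by Proposition~\ref{formulas} (equivalently, from (Tensor-eval) in the form $\rhom_{R}(X,R)\ltensor_{R}Y\simeq\rhom_{R}(X,Y)$) together with the equivalence between vanishing and self-duality established in Proposition~\ref{selfdualityforvanishing}. Since the induced tensor product is $\QQ$--bilinear, that identity turns~\eqref{tensorhom} into the equation
\[
\bigl(\alpha-(-1)^{\codim\XXX}\alpha^{*}\bigr)\otimes\gamma=0\qquad\text{in }\grot\der(\mmm).
\]
Setting $\delta=\alpha-(-1)^{\codim\XXX}\alpha^{*}\in\grot\proj(\XXX)$, the first half of the proposition therefore asks precisely when $\delta\otimes\gamma=0$ for all $\gamma\in\grot\der(\comp\XXX)$, and the second half asks when $\delta\otimes\gamma=0$ for all $\gamma\in\grot\proj(\comp\XXX)$.

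The key auxiliary step is a detection principle. Using Proposition~\ref{grothendieckobs} write $\delta=r[X]_{\proj(\XXX)}$. For $Z\in\der(\comp\XXX)$ one has $\delta\otimes[Z]_{\der(\comp\XXX)}=r[X\ltensor_{R}Z]_{\der(\mmm)}$, whose image under the isomorphism $\chi$ of~\eqref{chiisomorphism} is $r\chi(X,Z)$; by the relations defining $\grot\proj(\XXX)$, the vanishing of this quantity for every $Z\in\der(\comp\XXX)$ is equivalent to $\delta=0$ in $\grot\proj(\XXX)$. Similarly, for $W\in\proj(\comp\XXX)$ the image of $\delta\otimes[W]$ in $\grot\der(\mmm)$ has $\chi$--value $r\chi(W,X)$, which is the value of the induced linear map $\chi(W,-)\colon\grot\der(\XXX)\to\QQ$ on the image $\overline{\delta}\in\grot\der(\XXX)$ of $\delta$; by the relations defining $\grot\der(\XXX)$, the vanishing of this for all $W\in\proj(\comp\XXX)$ is equivalent to $\overline{\delta}=0$ in $\grot\der(\XXX)$.

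Combining the two paragraphs, ~\eqref{tensorhom} holds for all $\XXX$, all $\alpha\in\grot\proj(\XXX)$ and all $\gamma\in\grot\der(\comp\XXX)$ exactly when $\alpha=(-1)^{\codim\XXX}\alpha^{*}$ for all such $\XXX$ and $\alpha$, i.e.\ exactly when $R$ satisfies self-duality; by Proposition~\ref{selfdualityforvanishing} this is exactly when $R$ satisfies vanishing. In the same way, ~\eqref{tensorhom} with $\gamma$ restricted to $\grot\proj(\comp\XXX)$ holds for all $\XXX$ and $\alpha$ exactly when $\overline{\alpha-(-1)^{\codim\XXX}\alpha^{*}}=0$ in $\grot\der(\XXX)$ for all $\XXX$ and $\alpha$, which is the definition of $R$ satisfying numerical self-duality. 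To obtain the reformulation in terms of $\chi$ and $\xi$, I would use Proposition~\ref{grothendieckobs}---in particular the clause permitting the representing complex $X$ to be chosen with $\codim(\supp X)=\codim\XXX$, which is what makes the sign $(-1)^{\codim\XXX}$ match $(-1)^{\codim(\supp X)}$---to reduce to $\alpha=[X]_{\proj(\supp X)}$ with $\gamma=[Y]_{\der(\comp{\supp X})}$, respectively $[Y]_{\proj(\comp{\supp X})}$; here the membership $\supp Y\subseteq\comp{\supp X}$ is precisely the hypothesis that $\supp X\cap\supp Y=\{\mmm\}$ and $\dim(\supp X)+\dim(\supp Y)\leq\dim R$. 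Applying $\chi$ to~\eqref{tensorhom} and using $X^{*}\ltensor_{R}Y\simeq\rhom_{R}(X,Y)$, so that $\chi(X^{*},Y)=\xi(X,Y)$, then yields~\eqref{chixi}; the same computation with $Y\in\proj(R)$ gives the statement about numerical self-duality.

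All the substantive input is already in Propositions~\ref{formulas} and~\ref{selfdualityforvanishing}. The only point demanding genuine care is the detection principle of the second paragraph---namely that the classes of $\grot\proj(\XXX)$ and of $\grot\der(\XXX)$ are faithfully separated by pairing against $\der(\comp\XXX)$ and against $\proj(\comp\XXX)$, respectively---but this is immediate from the definitions of the Grothendieck spaces, so I anticipate no real obstacle beyond this bookkeeping.
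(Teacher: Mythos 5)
Your proposal is correct and follows essentially the same route as the paper: both reduce~\eqref{tensorhom} via the identity $\hom(\alpha,\gamma)=\alpha^{*}\otimes\gamma$ of Proposition~\ref{formulas} to the (numerical) self-duality of $\alpha$, invoke Proposition~\ref{selfdualityforvanishing}, and pass to the $\chi$/$\xi$ formulation using Proposition~\ref{grothendieckobs} with $\codim(\supp X)=\codim\XXX$. Your explicit ``detection principle'' is exactly the separation property built into the defining relations of $\grot\proj(\XXX)$ and $\grot\der(\XXX)$ that the paper uses implicitly.
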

\begin{proof}
  Employing Proposition~\ref{formulas} it is readily verified that~\eqref{tensorhom} is
  equivalent to 
\[
\alpha\otimes\gamma=(-1)^{\codim\XXX}\alpha^*\otimes\gamma.
\]
However, this identity is satisfied for all
$\gamma\in\grot\der(\comp\XXX)$ precisely when $\alpha$ is
self-dual. From Proposition~\ref{selfdualityforvanishing} it follows that $R$
satisfies vanishing if and only if~\eqref{tensorhom} holds for all specialization-closed subsets
  $\XXX$ of $\spec R$, all $\alpha\in\grot\proj(\XXX)$ and all
  $\gamma\in\grot\der(\comp\XXX)$.

  On the other hand, applying the above argument to the case where
  $\gamma\in\grot\proj(\comp\XXX)$ shows that $R$ satisfies numerical
  self-duality presily when~\eqref{tensorhom} is satisfied for all specialization-closed
  subsets $\XXX$ of $\spec R$, all $\alpha\in\grot\proj(\XXX)$ and
  all $\gamma\in\grot\proj(\comp\XXX)$.

  Assume next that~\eqref{tensorhom} holds in $\grot\der(\mmm)$ for
  all specialization-closed subsets $\XXX$ of $\spec R$, all
  $\alpha\in\grot\proj(\XXX)$ and all
  $\gamma\in\grot\der(\comp\XXX)$. If $X\in\proj(R)$  
  and $Y\in\der(R)$ are complexes such that 
\begin{equation}\label{frankild}
\supp X\cap\supp Y=\{\mmm\}\quad\text{and}\quad \dim(\supp
X)+\dim(\supp  Y)\leq \dim R,
\end{equation}
the identity~\eqref{chixi} follows by setting
\[
\XXX=\supp X,\quad \alpha=[X]_{\proj(\XXX)}\quad\text{and}\quad
\gamma=[Y]_{\der(\comp\XXX)}
\]
in~\eqref{tensorhom}.
Conversely, if \eqref{chixi} holds for all
 complexes $X\in\proj(R)$ and $Y\in\der(R)$ such that~\eqref{frankild}
  is satisfied, then~\eqref{tensorhom} follows for all specialization-closed 
  subsets $\XXX$ of $\spec R$, all $\alpha\in\grot\proj(\XXX)$ and all
  $\gamma\in\grot\der(\comp\XXX)$, since we by 
  Proposition~\ref{grothendieckobs},
  $\alpha=r[X]_{\proj(\XXX)}$ for an $r\in\QQ$ and a 
  complex $X\in\proj(\XXX)$ with $\codim(\supp X)=\codim
  \XXX$. Applying the same argument to elements
  $\gamma\in\grot\proj(\comp\XXX)$ and complexes $Y\in\proj(R)$
  proves the last part of the proposition.
\end{proof}

\begin{rema}
    Proposition~\ref{chixivanishing} confirms Chan's
  supposition in~\cite{chan}, in the setting of complexes rather than
  modules, that the formula in~\eqref{chixi} is equivalent to the
  vanishing conjecture.
Note that, when restricting attention to complexes $Y$ in $\proj(R)$,
formula~\eqref{chixi} is equivalent to numerical self-duality, which
implies the weak vanishing conjecture but need not be equivalent to
it. This negatively answers the question of whether the restriction of
the formula in~\eqref{chixi}  to complexes $Y$ in $\proj(R)$  is equivalent to the weak vanishing conjecture.
\end{rema}

We already know that, if $R$ is regular, then $R$ satisfies vanishing,
whereas, if $R$ is a complete intersection (which is complete of prime
characteristic $p$ and with perfect residue field), then $R$ satisfies numerical
vanishing; see~\cite[Example~33]{halvorsenF}. The authors believe that this line of implications can be
continued, at least in the characteristic $p$ case, with the claim that, if $R$ is Gorenstein, $R$ satisfies
numerical self-duality, so that we have the following implications of
properties of $R$ in the case where $R$ is complete of prime
characteristic $p$ and with perfect residue field.
\[
\xymatrix{
{\text{regular}} \ar@{=>}[r] \ar@{=>}[d]  & {\text{vanishing}}
\ar@{=>}[d] & {\text{self-duality}} \ar@{<=>}[l]\\
{\text{complete intersection}} \ar@{=>}[r] \ar@{=>}[d]  &
{\text{numerical vanishing}} \ar@{=>}[d] \\
{\text{Gorenstein}}  \ar@{==>}[r]  & {\text{numerical self-duality}} \\
}
\]
This supposition complies with the following
proposition.

\begin{prop} \label{Rgorenstein}
  Assume that $R$ is Gorenstein and let $\XXX$ be a specialization-closed subset of
  $\spec R$. If $\dim\XXX\leq 2$, then all elements of
  $\grot\proj(\XXX)$ are numerically self-dual. In particular, if
  $\dim R\leq 5$, then $R$ satisfies numerical self-duality.
\end{prop}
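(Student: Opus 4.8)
The plan is to reduce numerical self-duality of $\grot\proj(\XXX)$ to ordinary self-duality of $\grot\proj(\comp\XXX)$, which is automatic because $\comp\XXX$ has co-dimension at most two. First I would make two reductions. By Proposition~\ref{grothendieckobs}\eqref{intheform} any $\alpha\in\grot\proj(\XXX)$ can be written $r[X]_{\proj(\XXX)}$ with $\dim(\supp X)=\dim\XXX$; replacing $\XXX$ by the closed set $\supp X$ changes neither $\codim\XXX$ nor, by functoriality of the inclusion homomorphisms, the question of numerical self-duality of the image in $\grot\der(\XXX)$. So I may assume $\XXX$ is closed and reduce to proving $[X]_{\der(\XXX)}=(-1)^{\codim\XXX}[X^*]_{\der(\XXX)}$ for a fixed $X\in\proj(\XXX)$; by the definition of $\grot\der(\XXX)$ this is the same as
\[
\chi(W\ltensor_R X)=(-1)^{\codim\XXX}\chi(W\ltensor_R X^*)\qquad\text{for all }W\in\proj(\comp\XXX).
\]

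The second ingredient is a lemma: if $R$ is Gorenstein of dimension $d$ and $Y\in\proj(\mmm)$, then $\chi(Y^*)=(-1)^d\chi(Y)$. I would prove this by taking $D=\shift^d R$ as normalized dualizing complex and noting that, for a finite-length module $M$, applying $\hom_R(M,-)$ to the minimal injective resolution of $D$ (whose only $E_R(k)$-summand sits in degree $0$) yields $\rhom_R(M,D)\simeq\hom_R(M,E_R(k))$, the Matlis dual of $M$, concentrated in degree zero; additivity of $\chi$ on triangles together with the length-preservation of Matlis duality then gives $\chi(\rhom_R(Y,D))=\chi(Y)$, and since $\rhom_R(Y,D)=\shift^d Y^*$ the claim follows.

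Next I would transform the right-hand side of the displayed identity. Using (Tensor-eval), and then $W\simeq W^{**}=\rhom_R(W^*,R)$ with (Adjoint), one gets $W\ltensor_R X^*\simeq\rhom_R(X,W)\simeq\rhom_R(X\ltensor_R W^*,R)=(X\ltensor_R W^*)^*$; since $X\ltensor_R W^*\in\proj(\XXX\cap\comp\XXX)=\proj(\mmm)$ by~\eqref{derfunc}, the lemma gives $\chi(W\ltensor_R X^*)=(-1)^d\chi(W^*\ltensor_R X)$. Because $\codim\XXX+d=2\dim R-\dim\XXX\equiv\dim\XXX\pmod 2$, the identity to be proved becomes $\chi(W\ltensor_R X)=(-1)^{\dim\XXX}\chi(W^*\ltensor_R X)$ for all $W\in\proj(\comp\XXX)$. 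Now $\XXX$ closed forces $\codim\comp\XXX=\dim R-\dim\comp\XXX=\dim R-\codim\XXX=\dim\XXX\leq 2$, so by Proposition~\ref{vanconditions}\eqref{A1} every element of $\grot\proj(\comp\XXX)$ satisfies vanishing, hence is self-dual by Proposition~\ref{selfdualityforvanishing}; in particular $[W]_{\proj(\comp\XXX)}=(-1)^{\dim\XXX}[W^*]_{\proj(\comp\XXX)}$, and testing this against $X\in\der(\compcomp\XXX)$ (legitimate since $\XXX\subseteq\compcomp\XXX$) is exactly the required identity. For the ``in particular'': if $\dim R\leq 5$ then any $\XXX$ has either $\dim\XXX\leq 2$ (handled above) or $\dim\XXX\geq 3$, whence $\codim\XXX\leq 2$ and Propositions~\ref{vanconditions}\eqref{A1} and~\ref{selfdualityforvanishing} give self-duality, hence numerical self-duality, directly.

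I expect the main obstacle to be the reduction in the third step: correctly turning numerical self-duality over $\XXX$ into ordinary self-duality over $\comp\XXX$ by feeding the Gorenstein lemma through (Tensor-eval) and (Adjoint), while keeping precise track of the three signs $(-1)^{\codim\XXX}$, $(-1)^{\dim R}$ and $(-1)^{\dim\XXX}$. The passage to a closed $\XXX$, needed so that $\dim\comp\XXX=\codim\XXX$ and hence $\codim\comp\XXX\leq 2$, is a minor but essential bookkeeping point; everything else is an application of results already established in the excerpt.
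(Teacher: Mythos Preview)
Your proof is correct and follows essentially the same route as the paper's: both exploit the Gorenstein identity $(-)^\dagger=\shift^{\dim R}(-)^*$ to transfer the star from $\alpha$ to the test element in $\grot\proj(\comp\XXX)$, and then invoke self-duality there via $\codim\comp\XXX\leq 2$ and Propositions~\ref{vanconditions} and~\ref{selfdualityforvanishing}. The paper compresses your lemma and the isomorphism $W\ltensor_R X^*\simeq(X\ltensor_R W^*)^*$ into the single identity $\alpha^*\otimes\beta=\alpha\otimes\beta^\dagger=(-1)^{\dim R}\alpha\otimes\beta^*$ from Proposition~\ref{formulas}, and it tacitly uses $\dim\comp\XXX=\codim\XXX$ without your explicit reduction to closed $\XXX$; otherwise the arguments are the same.
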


\begin{proof}
  Let $\XXX$ be a specialization-closed subset of $\spec R$ with $\dim\XXX\leq 2$ and
  consider elements $\alpha$ in $\grot\proj(\XXX)$ and $\beta$ in
  $\grot\proj(\comp{\XXX})$. Then $\codim\comp{\XXX}\leq 2$, and
  therefore $\beta$ satisfies vanishing by Proposition~\ref{vanconditions};
  in particular,
  \begin{equation*}
    \beta^*=(-1)^{\codim\comp{\XXX}}\beta = (-1)^{\dim R-\codim\XXX}\beta.
  \end{equation*}
  When $R$ is Gorenstein, the complex $D=\shift^{\dim R}R$ is a
  normalized dualizing complex for $R$ forcing
  $(-)^\dagger=\shift^{\dim R}(-)^*$. Thus, applying
  Proposition~\ref{formulas} the identity
  \begin{equation*}
    \alpha^*\otimes\beta = \alpha\otimes\beta^\dagger 
    = (-1)^{\dim R}\alpha\otimes\beta^* 
    = (-1)^{\codim \XXX}\alpha\otimes\beta 
  \end{equation*}
  holds in $\grot\der(\mmm)$. This proves that
  $\alpha^*=(-1)^{\codim\XXX}\alpha$ so that $\alpha$ is numerically self-dual.
  
  If $\dim R\leq 5$ then any specialization-closed subset $\XXX$ of $\spec R$
  must either satisfy $\codim\XXX\leq 2$, in which case vanishing
  holds in $\grot\proj(\XXX)$ by Proposition~\ref{vanconditions}, or $\dim\XXX\leq 2$. In either case,
  all elements of $\grot\proj(\XXX)$ are numerically self-dual.
\end{proof}

Since numerical self-duality implies weak vanishing, the preceding
proposition shows that weak vanishing holds for any Gorenstein ring of
dimension at most $5$. Dutta~\cite{duttaG2} has already proven
this fact.

\providecommand{\MR}{\relax\ifhmode\unskip\space\fi MR }
\providecommand{\MRhref}[2]{%
  \href{http://www.ams.org/mathscinet-getitem?mr=#1}{#2}
}
\providecommand{\href}[2]{#2}


\begin{thebibliography}{10}

\bibitem{avramovfoxby}
Luchezar~L. Avramov and Hans-Bj{\o}rn Foxby, \emph{Ring homomorphisms and
  finite {G}orenstein dimension}, Proc. London Math. Soc. (3) \textbf{75}
  (1997), no.~2, 241--270. \MR{1455856 (98d:13014)}

\bibitem{DGA}
Luchezar~L. Avramov, Hans-Bj{\o}rn Foxby, and Stephen Halperin,
  \emph{Differential graded homological algebra}, in preparation.

\bibitem{brunsherzog}
Winfried Bruns and J{\"u}rgen Herzog, \emph{Cohen-{M}acaulay rings}, Cambridge
  Studies in Advanced Mathematics, vol.~39, Cambridge University Press,
  Cambridge, 1993. \MR{95h:13020}

\bibitem{chan}
C-Y.~Jean Chan, \emph{An intersection multiplicity in terms of {E}xt-modules},
  Proc. Amer. Math. Soc. \textbf{130} (2002), no.~2, 327--336 (electronic).
  \MR{1862109 (2003a:13018)}

\bibitem{christensen}
Lars~Winther Christensen, \emph{Gorenstein dimensions}, Lecture Notes in
  Mathematics, vol. 1747, Springer-Verlag, Berlin, 2000. \MR{1799866
  (2002e:13032)}

\bibitem{duttamultiplicity}
Sankar~P. Dutta, \emph{Frobenius and multiplicities}, J. Algebra \textbf{85}
  (1983), no.~2, 424--448. \MR{725094 (85f:13022)}

\bibitem{duttaG2}
\bysame, \emph{Generalized intersection multiplicities of modules. {II}}, Proc.
  Amer. Math. Soc. \textbf{93} (1985), no.~2, 203--204. \MR{770519
  (86k:13026)}

\bibitem{dhm}
Sankar~P. Dutta, M.~Hochster, and J.~E. McLaughlin, \emph{Modules of finite
  projective dimension with negative intersection multiplicities}, Invent.
  Math. \textbf{79} (1985), no.~2, 253--291. \MR{778127 (86h:13023)}

\bibitem{gelfandmanin}
Sergei~I. Gelfand and Yuri~I. Manin, \emph{Methods of homological algebra},
  second ed., Springer Monographs in Mathematics, Springer-Verlag, Berlin,
  2003. \MR{1950475 (2003m:18001)}

\bibitem{gilletsoule}
H.~Gillet and C.~Soul{\'e}, \emph{Intersection theory using {A}dams
  operations}, Invent. Math. \textbf{90} (1987), no.~2, 243--277. \MR{910201
  (89d:14005)}

\bibitem{halvorsenF}
Esben~Bistrup Halvorsen, \emph{Diagonalizing the {F}robenius}, preprint (can be
  downloaded from \verb@http://www.math.ku.dk/~ebh/@), 2007.

\bibitem{hartshorne}
Robin Hartshorne, \emph{Residues and duality}, Lecture notes of a seminar on
  the work of A. Grothendieck, given at Harvard 1963/64. With an appendix by P.
  Deligne. Lecture Notes in Mathematics, No. 20, Springer-Verlag, Berlin, 1966.
  \MR{0222093 (36 \#5145)}

\bibitem{herzog}
J{\"u}rgen Herzog, \emph{Ringe der {C}harakteristik {$p$} und
  {F}robeniusfunktoren}, Math. Z. \textbf{140} (1974), 67--78. \MR{0352081
  (50 \#4569)}

\bibitem{kawasaki}
Takesi Kawasaki, \emph{On arithmetic {M}acaulayfication of {N}oetherian rings},
  Trans. Amer. Math. Soc. \textbf{354} (2002), no.~1, 123--149 (electronic).
  \MR{1859029 (2002i:13001)}

\bibitem{millersingh}
Claudia~M. Miller and Anurag~K. Singh, \emph{Intersection multiplicities over
  {G}orenstein rings}, Math. Ann. \textbf{317} (2000), no.~1, 155--171.
  \MR{1760672 (2001h:13031)}

\bibitem{morismith}
I.~Mori and S.~Paul Smith, \emph{B\'ezout's theorem for non-commutative
  projective spaces}, J. Pure Appl. Algebra \textbf{157} (2001), no.~2-3,
  279--299. \MR{1812056 (2001m:16075)}

\bibitem{moriE}
Izuru Mori, \emph{Serre's vanishing conjecture for {E}xt-groups}, J. Pure Appl.
  Algebra \textbf{187} (2004), no.~1-3, 207--240. \MR{2027903 (2005c:13018)}

\bibitem{peskineszpiro}
C.~Peskine and L.~Szpiro, \emph{Dimension projective finie et cohomologie
  locale. {A}pplications \`a la d\'emonstration de conjectures de {M}.
  {A}uslander, {H}. {B}ass et {A}. {G}rothendieck}, Inst. Hautes \'Etudes Sci.
  Publ. Math. (1973), no.~42, 47--119. \MR{0374130 (51 \#10330)}

\bibitem{robertsC}
Paul~C. Roberts, \emph{The vanishing of intersection multiplicities of perfect
  complexes}, Bull. Amer. Math. Soc. (N.S.) \textbf{13} (1985), no.~2,
  127--130. \MR{87c:13030}

\bibitem{robertsI}
\bysame, \emph{Intersection theorems}, Commutative algebra (Berkeley, CA,
  1987), Math. Sci. Res. Inst. Publ., vol.~15, Springer, New York, 1989,
  pp.~417--436. \MR{1015532 (90j:13024)}

\bibitem{roberts}
\bysame, \emph{Multiplicities and {C}hern classes in local algebra}, Cambridge
  Tracts in Mathematics, vol. 133, Cambridge University Press, Cambridge, 1998.
  \MR{2001a:13029}

\bibitem{serre}
Jean-Pierre Serre, \emph{Alg\`ebre locale. {M}ultiplicit\'es [{L}ocal algebra.
  {M}ultiplicities]}, Cours au Coll\`ege de France, 1957--1958, r\'edig\'e par
  Pierre Gabriel. Troisi\`eme \'edition, 1975. Lecture Notes in Mathematics,
  vol.~11, Springer-Verlag, Berlin, 1965. \MR{34 \#1352}

\bibitem{verdier}
Jean-Louis Verdier, \emph{Des cat\'egories d\'eriv\'ees des cat\'egories
  ab\'eliennes}, Ast\'erisque (1996), no.~239, xii+253 pp. (1997), With a
  preface by Luc Illusie, Edited and with a note by Georges Maltsiniotis.
  \MR{1453167 (98c:18007)}

\end{thebibliography}
\end{document}